\documentclass[12 pt]{article}


\usepackage{amsfonts, amsmath, amssymb, amsthm, amsopn, latexsym, hyperref, graphicx, mathrsfs, verbatim, enumerate, url} 
\usepackage{subfig} 
\usepackage[all]{hypcap} 
\usepackage{pst-all, multido} 
\usepackage[margin=1in]{geometry} 
\usepackage{Style} 


\begin{document}

\renewcommand{\qedsymbol}{$\square$}


\newtheorem{letthm}{Theorem}
\renewcommand*{\theletthm}{\Alph{letthm}}
\newtheorem{thm}{Theorem}[section]
\newtheorem{lem}[thm]{Lemma}
\newtheorem{cor}[thm]{Corollary}
\newtheorem{prop}[thm]{Proposition}
\theoremstyle{definition}
\newtheorem{rem}[thm]{Remark}
\newtheorem*{War}{Warning}
\newtheorem{Def}[thm]{Definition}
\newtheorem{Not}[thm]{Notation}
\newtheorem{Ass}[thm]{Assumption}
\newtheorem{ex}[thm]{Example}
\newtheorem*{exs}{Examples}
\newtheorem{obs}[thm]{Observation}
\newtheorem*{Ack}{Acknowledgements}



\title{Measures and dynamics on Noetherian spaces}
\author{William Gignac}
\date{\today}
\maketitle

\begin{abstract} \noindent We give an explicit description of all finite Borel measures on Noetherian topological spaces $X$, and characterize them as objects dual to a space of functions on $X$. We use these results to study the asymptotic behavior of continuous dynamical systems on Noetherian spaces.
\end{abstract}

\tableofcontents

\section{Introduction} The goal of this paper is to develop a theory of measures on Noetherian topological spaces, and use it to study dynamical systems on these spaces. Such systems arise often in complex and, more recently, nonarchimedean dynamics, where one often considers the dynamics of morphisms $f\colon X\to X$ of algebraic varieties. Viewing these as continuous dynamical systems for the Zariski topology  has led to results on the asymptotic behavior of multiplicities associated to the dynamical system (see \cite{MR1741274} and \cite{MR2513537}), which have proved useful in studying equidistribution problems (see \cite{MR2032940}, \cite{Rodrigo}, and \cite{UZ}). The present article will be used in forthcoming work \cite{Me} on equidistribution problems for dynamics in higher dimensional Berkovich spaces (see \cite{MR2578470} or \cite{Jonsson} for the $1$-dimensional case.)

Recall that a topological space $X$ is Noetherian if every descending chain of closed subsets $E_1\supseteq E_2\supseteq\cdots$ is eventually constant. An equivalent and often useful definition is the following: $X$ is Noetherian if and only if every nonempty collection of closed subsets  has an element which is minimal under inclusion. A closed set $E\subseteq X$ is said to be irreducible if it cannot be written as a union $E = E_1\cup E_2$ of two proper closed sets $E_1, E_2\subsetneq E$. In a Noetherian space, every closed set $E$ can be written as a finite union $E = E_1\cup\cdots \cup E_r$ of irreducible closed sets. Moreover, if one assumes that $E_i\not\subset E_j$ for $i\neq j$, this decomposition is unique; in this case, the $E_i$ are called the irreducible components of $E$. The proofs of these facts can be found in \cite{MR0463157}. 

While many of the results of this paper will be proved for general Noetherian spaces $X$, we spend the remainder of the section summarizing them in the specific case when $X$ is a \emph{Zariski space}, defined below.

\begin{Def}\label{assume} A Noetherian space $X$ is a \emph{Zariski space} if every nonempty irreducible closed subset $E\subseteq X$ has a unique \emph{generic point}, that is, a point which is dense in $E$.
\end{Def}

Many of the Noetherian spaces encountered in practice are Zariski spaces, so this is a case of particular interest. For instance, if $X$ is the underlying topological space of a Noetherian scheme, then $X$ is a Zariski space. The results we prove in this article take on their cleanest and simplest form for Zariski spaces.


In the first sections of this paper, namely \S2 through \S5, we focus solely on the measure theory of Noetherian spaces. We will begin by giving two different descriptions of their finite Borel measures. The first description, derived in \S2, is as follows.

\begin{letthm}\label{thmA} Let $X$ be a Zariski space, and let $\mu$ be a finite signed Borel measure on $X$. Then $\mu$ can be written uniquely as an absolutely convergent sum $\mu = \sum_{x\in X} c_x\delta_x$, where the $c_x$ are real numbers, and where $\delta_x$ is the Dirac probability measure at $x$.
\end{letthm}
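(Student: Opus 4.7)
The plan is to reduce via the Jordan decomposition $\mu = \mu^+ - \mu^-$ to the case where $\mu$ is a finite positive Borel measure, and then to define coefficients explicitly by setting, for each $x \in X$,
\[
c_x := \mu(\overline{\{x\}}) - \sup\bigl\{\mu(F') : F' \text{ closed},\ F' \subsetneq \overline{\{x\}}\bigr\}.
\]
This is non-negative, and since $\overline{\{x\}}$ is irreducible with $x$ as its unique generic point (Zariski hypothesis), $c_x$ captures the mass ``at $x$'' that is not carried by proper closed subsets of $\overline{\{x\}}$. My main objective is the identity $\mu(F) = \sum_{x \in F} c_x$ on every closed $F \subseteq X$: once this is established, $\mu$ and $\nu := \sum_{x \in X} c_x \delta_x$ (a well-defined finite positive Borel measure, assuming absolute summability) agree on the $\pi$-system of closed sets and have equal total mass, so Dynkin's $\pi$-$\lambda$ theorem forces them to agree on every Borel set.

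Absolute summability $\sum_x c_x \le \mu(X) < \infty$ follows by showing by induction on $|S|$ that $\sum_{x \in S} c_x \le \mu\bigl(\bigcup_{x \in S}\overline{\{x\}}\bigr)$ for every finite $S \subseteq X$: choose $x_0 \in S$ maximal under specialization, use irreducibility of $\overline{\{x_0\}}$ to turn $F_0 := \bigcup_{x \in S \setminus \{x_0\}}\bigl(\overline{\{x\}} \cap \overline{\{x_0\}}\bigr)$ into a proper closed subset of $\overline{\{x_0\}}$, note $c_{x_0} \le \mu(\overline{\{x_0\}} \setminus F_0)$, and observe that this remainder is disjoint from $\bigcup_{x \ne x_0}\overline{\{x\}}$ so that additivity closes the inductive step. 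The main identity is then proved by Noetherian induction on closed $F$ (valid by DCC): the $\le$ direction follows from summability applied to finite subsets of $F$, and for the $\ge$ direction, if $F$ is irreducible with generic point $\eta$ then $\mu(F) = c_\eta + \sup_{F' \subsetneq F}\mu(F')$ by definition, the inductive hypothesis rewrites each $\mu(F')$ as $\sum_{x \in F'}c_x$, and irreducibility (which keeps $\bigcup_{x \in T}\overline{\{x\}}$ proper for any finite $T \subseteq F \setminus\{\eta\}$) forces this supremum to equal $\sum_{x \in F \setminus\{\eta\}}c_x$. If $F$ is reducible with components $F_1, \ldots, F_r$, setting $F^{(2)} := F_2 \cup \cdots \cup F_r$, both $F_1$ and $F^{(2)}$ are proper closed subsets of $F$ by irredundancy, and inclusion--exclusion $\mu(F) = \mu(F_1) + \mu(F^{(2)}) - \mu(F_1 \cap F^{(2)})$ together with the induction hypothesis closes the reducible case.

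For signed $\mu$, one sets $c_x := c_x^+ - c_x^-$, which preserves absolute summability since $\sum |c_x| \le |\mu|(X)$. Uniqueness is established by a second Noetherian induction: if $\sum_x d_x \delta_x$ is the zero signed measure with $\sum|d_x| < \infty$, then evaluating it on $F = \overline{\{y\}}$ gives $\sum_{x \in \overline{\{y\}}} d_x = 0$ for each $y$, which inductively forces every $d_y = 0$. The hardest step is the irreducible case of the main identity: the formula for $c_x$ is tuned precisely so that the supremum over proper closed subsets realizes exactly the tail sum $\sum_{x \in F \setminus\{\eta\}}c_x$, an argument that depends crucially on the fact that finite unions of proper closed subsets of an irreducible space remain proper.
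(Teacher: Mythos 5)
Your proof is correct, but it reaches Theorem~\ref{thmA} by a genuinely more specialized route than the paper does. The paper first proves a classification theorem (Theorem~\ref{classification}) for \emph{arbitrary} Noetherian spaces, where the role of points is played by $\sigma$-irreducible closed sets; this requires introducing the auxiliary apparatus of $\sigma$-irreducibility, type~1/type~2 intersections, the generalized Dirac masses $\delta_E$, and Lemma~\ref{lemagree}. Theorem~\ref{thmA} is then obtained at the end by observing that on a Zariski space the $\sigma$-irreducible closed sets coincide with the irreducible ones and are in bijection with points via $E\mapsto$ generic point. You instead prove Theorem~\ref{thmA} directly, exploiting the defining feature of Zariski spaces from the start: the generic point $\eta$ of an irreducible closed $F$ lies in no proper closed subset, which is precisely what makes your $\sup_{F'\subsetneq F}\sum_{x\in F'}c_x = \sum_{x\in F\smallsetminus\{\eta\}}c_x$ step go through and lets you sidestep $\sigma$-irreducibility altogether. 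Your coefficient $c_x = \mu(\ol{\{x\}}) - \sup\{\mu(F'):F'\subsetneq\ol{\{x\}}\}$ is algebraically the same as the paper's $c_E = \inf\{\mu(E\smallsetminus F):F\subsetneq E\}$, and your finite-set induction for $\sum c_x\leq\mu(X)$ (pick a maximal point, cut out the trace of the others) is the same idea as the paper's telescoping estimate after reindexing; the Noetherian-induction agreement on closed sets is also parallel to the minimal-counterexample argument in Lemma~\ref{lemagree} and Theorem~\ref{classification}, with your appeal to Dynkin's $\pi$--$\lambda$ theorem substituting for the paper's Lemma~\ref{lemagree}. What the paper's approach buys is the extra generality of Theorem~\ref{classification} for non-Zariski Noetherian spaces (used later, e.g.\ in the Example after Theorem~\ref{classification} and in \S5 on completions); what yours buys is a shorter, self-contained proof of Theorem~\ref{thmA} with no new measure-theoretic machinery beyond Jordan decomposition and Dynkin.
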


Our second description, given in \S3, characterizes finite Borel measures on $X$ as objects dual to a space $SC(X)$ of functions on $X$, analogous to the duality between Radon measures and continuous functions on compact Hausdorff spaces. The space $SC(X)$ consists of all functions $f\colon X\to \R$ of the form $f = g - h$, where $g$ and $h$ are bounded upper semicontinuous functions on $X$. It is equipped with the supremum norm $\|f\| = \sup_X |f|$. 

\begin{letthm}\label{thmB} Let $X$ be a Zariski space. Then integration induces a duality between the real vector space $\mc{M}(X)$ of finite Borel measures on $X$ and $SC(X)$. That is, $\mc{M}(X)\cong SC(X)^*$.
\end{letthm}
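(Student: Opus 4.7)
My plan is to approximate elements of $SC(X)$ by finite linear combinations of characteristic functions of closed sets, convert each $\Lambda\in SC(X)^*$ into a finitely additive set function on the constructible subsets of $X$, and use the Noetherian geometry of $X$ to upgrade this to a countably additive Borel measure.

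The first step is a density claim: the span $V:=\mathrm{span}\{\chi_E:E\subseteq X\text{ closed}\}$ is dense in $SC(X)$ for the supremum norm. Given a bounded upper semicontinuous $g:X\to[-M,M]$ and $\epsilon>0$, partition $[-M,M]$ by $-M=c_0<c_1<\dots<c_N=M$ with $c_i-c_{i-1}\le\epsilon$ and let $F_i=\{g\ge c_i\}$, closed by semicontinuity; then $g_\epsilon:=c_0\chi_X+\sum_{i=1}^N(c_i-c_{i-1})\chi_{F_i}$ lies in $V$ and satisfies $\|g-g_\epsilon\|_\infty\le\epsilon$. Since arbitrary $f\in SC(X)$ are differences of two bounded USC functions, $V$ is dense in $SC(X)$. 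With this, the assignment $\mu\mapsto\Lambda_\mu$, $\Lambda_\mu(f):=\int f\,d\mu$, is a bounded linear map $\mathcal{M}(X)\to SC(X)^*$. For injectivity: if $\Lambda_\mu\equiv0$ then $\mu(E)=0$ for every closed $E$, and writing $\mu=\sum_xc_x\delta_x$ via Theorem~A, if the family $\{\overline{\{y\}}:c_y\ne0\}$ were nonempty the DCC would furnish a minimal element $\overline{\{y_0\}}$ in it; uniqueness of generic points then forces $\mathrm{supp}(\mu)\cap\overline{\{y_0\}}=\{y_0\}$, giving $\mu(\overline{\{y_0\}})=c_{y_0}\ne0$, a contradiction; hence $\mu=0$.

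For surjectivity, given $\Lambda\in SC(X)^*$ I set $\tilde\mu(C):=\Lambda(\chi_C)$ for each $C$ in the Boolean algebra $\mathcal{C}$ of constructible subsets of $X$; since $\chi_C\in V$ via inclusion--exclusion, this is well-defined, finitely additive on $\mathcal{C}$, and bounded by $\|\Lambda\|$. The crux is the following \emph{Key Lemma}: in a Noetherian Zariski space, any disjoint partition $C=\bigsqcup_{n\ge1}C_n$ of a constructible $C$ into constructible pieces has only finitely many nonempty terms. I would prove it by Noetherian induction on $\overline C$: the generic points of the finitely many irreducible components of $\overline C$ all lie in $C$ (being in a dense constructible subset of their respective component), so in some $C_{n_1},\dots,C_{n_r}$, and the leftover $\bigsqcup_{n\ne n_i}C_n$ is constructible, omits all these generic points, and hence (since a dense constructible subset of an irreducible Noetherian space must contain the generic point) has closure properly contained in $\overline C$, where the inductive hypothesis applies. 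This lemma forces $\tilde\mu$ to be countably additive on $\mathcal{C}$, and Carath\'eodory extension produces a unique finite signed Borel measure $\mu$ agreeing with $\tilde\mu$ on $\mathcal{C}$. Then $\int f\,d\mu=\Lambda(f)$ for $f\in V$ by construction and for all $f\in SC(X)$ by the density established earlier. Since $\mu\mapsto\Lambda_\mu$ is a bounded linear bijection between Banach spaces, the open mapping theorem yields the topological isomorphism $\mathcal{M}(X)\cong SC(X)^*$.

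The main obstacle is the combinatorial Key Lemma, which is where the Noetherian and Zariski hypotheses combine in an essential way: without it, $\tilde\mu$ would remain only finitely additive and no Borel measure could be extracted. Once the lemma and the density statement are in place, the rest is routine Carath\'eodory and Banach-space bookkeeping.
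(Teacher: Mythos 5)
Your argument is correct, but it takes a genuinely different route from the paper's. The paper first uses the Riesz-space structure of $SC(X)$ to decompose an arbitrary $\varphi\in SC(X)^*$ into positive functionals, then, given a positive $\varphi$, \emph{explicitly constructs} the candidate measure as $\mu=\sum_{E}c_E\delta_E$ with $c_E=\inf\{\varphi(\chi_E-\chi_F):F\subsetneq E\ \text{closed}\}$, leaning on the classification theorem for measures (your Theorem~A, the paper's Theorem~2.8). The verification $\varphi=\Lambda(\mu)$ is by a minimal-counterexample Noetherian induction, using $\sigma$-irreducibility at the crucial step, and the argument goes through for the weaker notion of ``complete'' Noetherian space. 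Your route instead transfers $\varphi$ to a bounded finitely additive set function on the Boolean algebra of constructible sets and appeals to Carath\'eodory extension, with the load borne by your Key Lemma: in a Noetherian Zariski space a constructible set admits no infinite disjoint constructible partition. That lemma is true and your sketch of its proof is essentially correct --- the generic point of each component of $\overline C$ lies in $C$ because a dense constructible subset of an irreducible Noetherian space contains a nonempty open set, and once finitely many generic points are accounted for the leftover has strictly smaller closure, closing the Noetherian induction. Two things worth tightening: (i) Carath\'eodory extension as usually stated handles \emph{positive} premeasures, so you should Jordan-decompose the bounded finitely additive $\tilde\mu=\tilde\mu^+-\tilde\mu^-$ on the algebra first, note each piece is a premeasure by the Key Lemma, and then extend; (ii) since the Key Lemma's proof uses generic points, your approach is tied to the Zariski hypothesis, whereas the paper's $c_E$-construction works for any complete Noetherian space. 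In exchange, your approach isolates a clean and somewhat surprising combinatorial fact (the Key Lemma) that the paper never makes explicit, and sidesteps the need for the classification theorem on the surjectivity side.
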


The isomorphism $\mc{M}(X)\cong SC(X)^*$ allows us to pull back the weak and strong topologies from $SC(X)^*$ to $\mc{M}(X)$. In \S4, we explore the compactness properties of $\mc{M}(X)$ in its weak topology. The main result of the section is the following.

\begin{letthm}\label{thmC} Let $X$ be a Zariski space. Then any weakly closed and strongly bounded subset of $\mc{M}(X)$ is both compact and sequentially compact in the weak topology.
\end{letthm}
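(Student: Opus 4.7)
The plan is to split into compactness and sequential compactness, both leveraging Theorem B's identification $\mc{M}(X) \cong SC(X)^*$ and Theorem A's description of finite Borel measures. Compactness is immediate from Banach--Alaoglu: the weak topology on $\mc{M}(X)$, pulled back from $SC(X)^*$, is the weak-$*$ topology, whose closed unit ball is compact. A strongly bounded subset lies in some multiple of this ball, so a strongly bounded, weakly closed subset is closed in a compact set, hence compact.

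For sequential compactness, let $(\mu_n) \subset \mc{M}(X)$ be a bounded sequence. By Theorem A each $\mu_n = \sum_x c_{n,x}\delta_x$ has countable support, so $Y := \bigcup_n \mathrm{supp}\,\mu_n$ is countable. The compactness already established yields a weak cluster point $\mu \in \mc{M}(X)$; its support is also countable, and setting $Z := Y \cup \mathrm{supp}\,\mu$, every $\mu_n$ and $\mu$ is concentrated on the countable set $Z$. Consequently, for any $f \in SC(X)$, the integral $\int f\, d\mu'$ for $\mu' \in \{\mu_n\} \cup \{\mu\}$ depends only on $f|_Z \in \ell^\infty(Z)$.

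The crux is the claim that $SC(X)|_Z$ is separable in the sup norm. Granting it, the restriction of the weak topology to the bounded subset $\{\mu_n\}\cup\{\mu\}$ is metrizable: one constructs a metric from a countable dense subset of $SC(X)|_Z$, which separates measures in $\mc{M}(X)$ by the duality of Theorem B. In a metrizable space, every cluster point of a sequence is a subsequential limit, so $\mu$ is the weak limit of some subsequence of $(\mu_n)$.

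The main obstacle is the separability claim, which I would attack by Noetherian induction on $X$. After the routine reduction to the irreducible case via the finite decomposition into irreducible components, the key structural input is that on an irreducible Zariski space with generic point $\eta$, specialization forces every bounded upper semicontinuous $f$ to satisfy $f(\eta) = \inf_X f$, so the level set $E_\varepsilon := \{f \ge f(\eta)+\varepsilon\}$ is a proper closed subset for each $\varepsilon > 0$. The USC truncation
\[
f_\varepsilon := f \cdot \mathbf{1}_{E_\varepsilon} + f(\eta)\cdot \mathbf{1}_{X \setminus E_\varepsilon}
\]
approximates $f$ to within $\varepsilon$ in sup norm, and $f_\varepsilon|_Z$ decomposes as a constant multiple of $\mathbf{1}_Z$ plus a USC function on the proper closed subset $E_\varepsilon$ restricted to $Z \cap E_\varepsilon$, the latter being separable by the inductive hypothesis. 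The delicate combinatorial point, and where I expect the real difficulty to lie, is controlling the collection of traces $\{E \cap Z : E \subsetneq X \text{ proper closed}\}$: I would argue it is countable via a separate Noetherian-induction argument showing that $Z$ with its subspace topology has only countably many closed sets. Once that is in place, one assembles countable $\varepsilon$-nets from the inductive data on each smaller stratum and takes a countable union over $\varepsilon = 1/n$ to produce a countable dense subset of $SC(X)|_Z$.
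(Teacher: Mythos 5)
Your overall strategy is correct and genuinely different from the paper's. For compactness both arguments are the same Alaoglu application via Theorem~\ref{thmB}. For sequential compactness, the paper proves Theorem~\ref{seq} by a transfinite argument: it introduces a partially ordered collection $\ms{S}$ of pairs $([I],\mu)$, where $[I]$ is an equivalence class of infinite index sets and $\mu$ is a ``partial limit'' measure determined on a family of closed sets, applies Zorn's lemma, and shows a maximal element produces a convergent subsequence. Your route instead leverages \hyperref[thmA]{Theorem~\ref*{thmA}} directly: every measure is carried by a countable set, so the given sequence together with any Alaoglu cluster point is concentrated on a single countable $Z$, and the problem reduces to metrizability of the weak topology on bounded measures carried by $Z$. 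What this buys is transparency and economy: sequential compactness becomes a corollary of separability of $SC(X)|_Z$, rather than the output of a bespoke Zorn argument.

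The one genuine gap is the key lemma that $SC(X)|_Z$ is separable, equivalently that $Z$ has only countably many closed sets in the subspace topology (they biject with $\{Z\cap F : F \text{ closed in }X\}$, and the indicators of these sets are at pairwise sup-distance $1$, so separability forces countability and conversely, by \hyperref[lemagree2]{Lemma~\ref*{lemagree2}}, countability gives separability). You correctly flag this as the crux, but the Noetherian induction you suggest does not close it: in the irreducible case there may be uncountably many proper closed $F\subsetneq E$, and the inductive hypothesis controls $\{Z\cap G : G\subseteq F\}$ for each fixed $F$ rather than the union over all such $F$. The $f_\varepsilon$ truncation machinery is likewise unnecessary once the countability claim is in hand, since \hyperref[lemagree2]{Lemma~\ref*{lemagree2}} already shows the span of $\{\chi_E : E\in\ms{E}\}$ is dense in $SC(X)$, hence $\{\chi_{E\cap Z}\}$ spans a dense subspace of $SC(X)|_Z$. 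Here is the short argument that actually works. In any Noetherian space, $\overline{S}=\overline{F_0}$ for some finite $F_0\subseteq S$: the family $\{\overline{F} : F\subseteq S\ \text{finite}\}$ is directed under inclusion, so by the ascending chain condition it has a maximum $\overline{F_0}$, and maximality forces $\overline{\{s\}}\subseteq\overline{F_0\cup\{s\}}=\overline{F_0}$ for every $s\in S$, whence $\overline{S}=\overline{F_0}$. Now each nonempty closed $A\subseteq Z$ satisfies $A=\overline{A}\cap Z$, so $A\mapsto\overline{A}$ injects the closed subsets of $Z$ into $\{\overline{F}:F\subseteq Z\ \text{finite}\}$, a countable set. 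With this lemma, your metrizability argument and the cluster-point-to-subsequence step go through and complete the proof.
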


Finally, in \S5 we complete our purely measure theoretic studies by presenting a means for passing from an arbitrary Noetherian space $X$ to a Zariski space $\hat{X}$, without drastically changing the measure theory of $X$. This will allow us to later reduce measure theoretic problems on $X$ to problems on $\hat{X}$, where measures are better behaved. 

Starting in \S6, we begin to study continuous dynamical systems $f\colon X\to X$ on  Noetherian spaces. Our first dynamical result, proved in \S6, is a classification of all $f$-ergodic Borel probability measures on $X$.

\begin{letthm}\label{thmD} Let $X$ be a Zariski space, and let $f\colon X\to X$ be continuous. Then the $f$-ergodic probability measures on $X$ are those measures of the form \[
\mu = \frac{1}{r}(\delta_{x_1} + \delta_{x_2} + \cdots + \delta_{x_r}),\] where the $x_i$ form a periodic cycle for $f$.
\end{letthm}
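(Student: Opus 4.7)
The ``if'' direction is straightforward. Given a periodic cycle $x_1,\dots,x_r$ and $\mu=\tfrac{1}{r}\sum_i\delta_{x_i}$, the relation $f_*\delta_x=\delta_{f(x)}$ together with the cyclic permutation of the $x_i$ gives $f_*\mu=\mu$. For ergodicity, if $f^{-1}(A)=A$ is Borel then $x_i\in A\iff x_{i+1}\in A$, so either all or none of the $x_i$ lie in $A$, and $\mu(A)\in\{0,1\}$.

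For the converse, I would first use Theorem~\ref{thmA} to write a given ergodic probability measure as $\mu=\sum_{x\in X}c_x\delta_x$; positivity of $\mu$ forces $c_x\geq 0$ (a direct consequence of the uniqueness clause in Theorem~\ref{thmA} applied to positive measures), and normalization gives $\sum c_x=1$. Using $f_*\delta_x=\delta_{f(x)}$ and the uniqueness of the decomposition, the invariance $f_*\mu=\mu$ translates into the pointwise identities
\[
 c_y \;=\; \sum_{x\in f^{-1}(y)} c_x \qquad (y\in X).
\]
Two consequences drive the rest of the argument: the support $S=\{x:c_x>0\}$ is countable (since $\sum c_x<\infty$) and satisfies $f(S)=S$; and since the term corresponding to $x=y$ appears on the right-hand side for $z=f(y)$, the coefficients are nondecreasing along every forward orbit, $c_y\leq c_{f(y)}\leq c_{f^2(y)}\leq\cdots$.

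The key structural observation is that every forward orbit inside $S$ is eventually periodic. The nondecreasing bounded sequence $(c_{f^n(y)})$ converges to some $c>0$, and the set $\{x:c_x\geq c/2\}$ is finite (as $\sum c_x<\infty$), so the orbit eventually lies in this finite set and must cycle. Along any periodic cycle $C\subseteq S$, the chain of inequalities is tight and $c_x$ is constant on $C$ with common value $c_C$. I then apply ergodicity to the basin $A_C=\bigcup_{n\geq 0}f^{-n}(C)$, a Borel set (each $f^{-n}(C)$ is the continuous preimage of the finite Borel set $C$) satisfying $f^{-1}(A_C)=A_C$. Since $C\subseteq A_C$ has positive $\mu$-measure, ergodicity forces $\mu(A_C)=1$; basins of distinct cycles are disjoint (each orbit in $S$ stabilizes into a unique cycle), so $S$ contains exactly one periodic cycle, say $C=\{x_1,\dots,x_r\}$.

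To conclude $S=C$, suppose some $y\in S\setminus C$ has forward orbit first entering $C$ at step $k\geq 1$. Applying the invariance identity at $f^k(y)\in C$: the left side equals $c_C$, while on the right the predecessor of $f^k(y)$ within $C$ contributes $c_C$ and the distinct preimage $f^{k-1}(y)\in S\setminus C$ contributes an additional $c_{f^{k-1}(y)}>0$, giving $c_C\geq c_C+c_{f^{k-1}(y)}>c_C$, a contradiction. Hence $S=C$, and $\sum c_{x_i}=1$ with $c_{x_i}=c_C$ forces $c_C=1/r$. The most delicate preliminary step is establishing nonnegativity of the $c_x$ for positive $\mu$, together with the Borel measurability of individual points needed to treat $C$ and its preimages as Borel sets; once these facts from the development of Theorem~\ref{thmA} are in hand, the dynamical content reduces to a clean bookkeeping argument on the weights via the invariance identity and ergodicity applied to basins.
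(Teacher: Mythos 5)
Your classification of the $f$-invariant probability measures — nonnegativity of the coefficients, the invariance identity $c_y=\sum_{f(x)=y}c_x$, monotonicity of $c_{f^n(y)}$ along forward orbits, eventual periodicity of orbits in the support, constancy of the weights along cycles, and the bookkeeping that kills pre-cycle tails — is essentially the content of Proposition~\ref{inv}, which shows that every invariant measure is an absolutely convergent linear combination of cycle measures. Where you diverge from the paper is in how ergodicity is brought to bear. The paper finishes by invoking the standard fact that the ergodic probability measures are exactly the extreme points of the convex set of invariant probability measures: once $\mu=\sum_n c_n\mu_n$ with distinct cycle measures $\mu_n$ and $c_n>0$, any sum with two or more terms is a nontrivial convex combination of invariant probability measures, hence not extremal, hence not ergodic, so an ergodic $\mu$ must itself be a single cycle measure.

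Your alternative — applying the definition of ergodicity directly to the basin $A_C=\bigcup_{n\ge0}f^{-n}(C)$ — has a genuine gap: in a Zariski space a singleton need not be a Borel set, so $C$, and therefore $A_C$, need not be Borel, and $\mu(A_C)$ is simply not defined. By Proposition~\ref{borel} a set is Borel if and only if it has type~$1$ or type~$2$ intersection with every $\sigma$-irreducible closed set. If $\eta$ is the generic point of an uncountable $\sigma$-irreducible closed set $E$ whose proper closed subsets are all finite (e.g.\ $E=X=\operatorname{Spec} k[t]$ with $k$ uncountable and algebraically closed), then type~$2$ intersection of $\{\eta\}$ with $E$ fails because $\eta$ lies in no proper closed subset, and type~$1$ fails because $E\smallsetminus\{\eta\}$ is uncountable and cannot be covered by countably many finite sets; hence $\{\eta\}$ is not Borel. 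Taking $f=\mathrm{id}$, the measure $\delta_\eta$ is ergodic, yet your $C=A_C=\{\eta\}$ is not a Borel set, so ergodicity cannot be tested on it. The remedy is precisely the paper's route: drop the basin argument and finish with the extremality characterization of ergodic measures, which requires only the decomposition into cycle measures that you have already correctly established.
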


In \S7, we study the asymptotic behavior of forward and reverse orbits of $f$. We present two results of a similar flavor (\hyperref[thmE]{Theorem~\ref*{thmE}} and \hyperref[thmF]{Theorem~\ref*{thmF}} below) which describe this behavior. The first of these is originally due to Favre (see \cite{thesis} and \cite{MR1741274}), but we give a new proof and measure theoretic interpretation.

\begin{letthm}[Favre]\label{thmE} Let $X$ be a Zariski space, and let $f\colon X\to X$ be a continuous map. Fix a point $x\in X$. Then there exist points $y_1,\ldots, y_r\in X$ which form a periodic cycle for $f$ such that \[
\frac{1}{n}\sum_{k=0}^{n-1} f^k_*\delta_x\to \frac{1}{r}(\delta_{y_1} + \delta_{y_2} + \cdots + \delta_{y_r})\] weakly as $n\to \infty$. Moreover, the $y_i$ have an explicit description: they are the generic points of the irreducible components of the smallest closed set which contains $f^n(x)$ for all sufficiently large $n$.
\end{letthm}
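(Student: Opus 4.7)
The plan is to proceed by Noetherian induction on the Zariski space $X$, assuming the theorem for all proper closed Zariski subspaces of $X$. Setting $Z_N=\overline{\{f^k(x):k\geq N\}}$, the Noetherian property forces this descending chain to stabilize at some closed $Z$, and continuity of $f$ yields $f(Z)\subseteq Z$ together with $\overline{f(Z)}=Z$. If $Z\subsetneq X$, the dynamics restrict to $f|_Z\colon Z\to Z$ and the inductive hypothesis applied to $(Z,f|_Z,f^N(x))$ for $N$ large gives the conclusion immediately; so assume $Z=X$, and decompose $X=Y_1\cup\cdots\cup Y_r$ into irreducible components with generic points $y_i$. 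Each $\overline{f(Y_i)}$ is an irreducible closed subset of $X=\overline{f(X)}$, and matching against the irreducible decomposition of $X$ produces a permutation $\sigma$ with $\overline{f(Y_i)}=Y_{\sigma(i)}$; continuity plus the uniqueness of generic points then forces $f(y_i)=y_{\sigma(i)}$.

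I next show $\sigma$ is a single $r$-cycle and that $f^k(x)$ lies in a unique component $Y_{i_k}$ for $k$ large. Picking $N_0$ with $f^{N_0}(x)\in Y_{i_0}$ (possible since $Z=X$), the tail orbit is trapped in the $f$-invariant closed set $\bigcup_{j\in C}Y_j$, where $C$ is the $\sigma$-orbit of $i_0$, so $Z=X$ forces $C=\{1,\ldots,r\}$. For unique containment, the sets $S_k=\{i:f^k(x)\in Y_i\}$ satisfy $\sigma(S_k)\subseteq S_{k+1}$, so $|S_k|$ is nondecreasing and stabilizes at some $m\geq 1$; if $m\geq 2$ the tail orbit lies in the closed boundary $B=\bigcup_{i\neq j}(Y_i\cap Y_j)\subsetneq X$ (each $Y_i\cap Y_j\subsetneq Y_i$ by the defining property of irreducible components), again contradicting $Z=X$. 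So $i_{k+1}=\sigma(i_k)$ for $k$ large, and when $r\geq 2$ each $Y_j\subsetneq X$ is a proper closed Zariski subspace: grouping the Cesaro sum by residue $a\pmod r$ and noting that the $f^r$-orbit of $f^a(x)$ is tail-dense in the irreducible $Y_{j_a}$ (its closure is an irreducible component of $Z=X$ contained in $Y_{j_a}$, hence equal to $Y_{j_a}$), the inductive hypothesis applied to $(Y_{j_a},f^r|_{Y_{j_a}},f^a(x))$ gives convergence to $\delta_{y_{j_a}}$, and averaging over $a$ yields $\mu_n\to\tfrac{1}{r}\sum_i\delta_{y_i}$.

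This reduces to the base case $X$ irreducible with $Z=X$. Then $\overline{f(X)}\supseteq\overline{\{f^k(x):k\geq 1\}}=X$, so $f$ is dominant and $f(y)=y$. Since on a Noetherian space the simple USC functions (finite $\mathbb{R}$-linear combinations of indicators of closed sets) are sup-norm dense in the bounded USC functions, weak convergence $\mu_n\to\delta_y$ reduces by linearity to showing $\mu_n(E)\to 0$ for every proper closed $E\subsetneq X$, and this is the main obstacle. My plan is a second Noetherian induction on $E$: if $E$ is reducible, decompose into irreducible components and apply induction to each; if $E$ is irreducible and $f(E)\subseteq E$, forward invariance together with tail density forces the orbit to avoid $E$ altogether, whence $\mu_n(E)=0$; if $E$ is irreducible with $\overline{f(E)}\subsetneq X$, the inductive hypothesis gives $\mu_n(\overline{f(E)})\to 0$, and since each orbit visit to $E$ at time $k$ produces a visit to $\overline{f(E)}$ at time $k+1$ we obtain $\mu_n(E)\leq\mu_n(\overline{f(E)})+O(1/n)\to 0$. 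The genuinely hard sub-case is $E$ irreducible with $\overline{f(E)}=X$, i.e.\ $f|_E$ dominant; here $f$ must send the generic point of $E$ to $y$, and the plan is to use this rigidity together with Theorems~C and~D (weak compactness and the classification of ergodic probability measures) to rule out any weak limit of the $\mu_n$ that assigns positive mass to $E$. Making this last step rigorous, and verifying that the combined induction terminates cleanly, is where I expect the main technical difficulty.
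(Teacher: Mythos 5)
Your reduction via the outer Noetherian induction is essentially sound: if the $\omega$-limit set $Z\subsetneq X$ you restrict, and if $Z=X$ you show $f$ permutes the irreducible components transitively, the orbit tail is eventually in a single component at each step, and the residue-class grouping plus the inductive hypothesis on the proper components $Y_{j_a}$ (using that the $f^r$-orbit of the tail is dense in $Y_{j_a}$) gives the claimed limit. This is a genuinely different route from the paper, which performs no induction at all.

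The gap is in the inner induction for the base case ($X$ irreducible, $Z=X$, $y$ the fixed generic point). You do Noetherian induction on proper closed $E$ and want to show $\mu_n(E)\to 0$. The case $f(E)\subseteq E$ is fine (the orbit tail would be trapped in $E$, contradicting $Z=X$). But in the case ``$E$ irreducible with $\overline{f(E)}\subsetneq X$'' you invoke the inductive hypothesis to get $\mu_n(\overline{f(E)})\to 0$. This is not justified: $\overline{f(E)}$ need not be a proper closed \emph{subset of $E$}, so the minimality of $E$ says nothing about it. One cannot repair this by iterating, either: the chain $E,\overline{f(E)},\overline{f^2(E)},\ldots$ is neither ascending nor descending in general, and Noetherian spaces satisfy only the DCC on closed sets, so there is no reason for $\overline{f^k(E)}$ to eventually land inside $E$ or to reach $X$. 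In addition, the sub-case $\overline{f(E)}=X$ is left open, as you acknowledge.

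The ingredients you list for the hard sub-case (weak sequential compactness from Theorem~C, the ergodic classification from Theorem~D) are exactly what the paper uses, but the paper deploys them globally, with no induction at all, and in doing so handles the problematic cases uniformly. The paper's argument: by Theorem~C it suffices to identify every weak subsequential limit $\mu$ of $\mu_n$; the telescoping identity shows $\mu$ is $f$-invariant; by Proposition~6.5 write $\mu=\sum_j c_j\nu_j$ with $\nu_j=s^{-1}(\delta_{z_1}+\cdots+\delta_{z_s})$ ergodic and $c_j>0$; set $Z=\overline{\{z_1,\ldots,z_s\}}$, which is $f$-invariant; the inequality $0<c_j\le\mu(Z)=\lim_i n_i^{-1}\sum_{k<n_i}\chi_Z(f^k(x))$ forces at least one orbit point into $Z$ and hence the whole tail into $Z$, so $L\subseteq Z$; and $\mu(L)=\lim_i n_i^{-1}\sum_{k<n_i}\chi_L(f^k(x))=1$ forces $Z=L$, i.e.\ each $\nu_j$ is the unique ergodic measure on $L$. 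Once you reach this stage, both your inductions become superfluous. The takeaway: the invariance of weak limits plus the forward-invariance of ergodic supports is the decisive mechanism, and attempting to replace it with a set-by-set Noetherian induction runs into the fact that $\overline{f(E)}$ is not controlled by the subset order on closed sets.
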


\begin{letthm}\label{thmF} Let $X$ be a Zariski space, and let $f\colon X\to X$ be a continuous surjective map. Fix a point $x\in X$ and a reverse orbit of $x$, that is, a sequence $(x_{-n})_{n=0}^\infty$ of points such that $x_0 = x$ and $f(x_{-n}) = x_{-n+1}$ for all $n\geq 1$. Then there exist points $y_1,\ldots, y_r\in X$ which form a periodic cycle for $f$ such that \[
\frac{1}{n}\sum_{k=0}^{n-1} \delta_{x_{-k}}\to \frac{1}{r}(\delta_{y_1} + \delta_{y_2} + \cdots + \delta_{y_r})\] weakly as $n\to \infty$. Moreover, the $y_i$ have an explicit description:  they are the generic points of the irreducible components of $\ol{\{x_{-n} : n\geq 0\}}$. 
\end{letthm}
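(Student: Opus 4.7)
The plan is to first establish that $E := \ol{\{x_{-n}:n\geq 0\}}$ has its irreducible components cyclically permuted by $f$ in the pattern described, and then pin down the weak limit of the Cesaro averages $\mu_n := \frac{1}{n}\sum_{k=0}^{n-1}\delta_{x_{-k}}$ via the compactness of \hyperref[thmC]{Theorem~\ref*{thmC}} together with a ``forward trap'' argument that rules out every periodic cycle except $\{y_1,\ldots,y_r\}$ from contributing to the limit.

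For the structural step, surjectivity of $f$ plays a key role. The descending chain $\ol{\{x_{-n}:n\geq N\}}$ is eventually constant at some $E_\infty$ by the Noetherian property; because $f(\{x_{-n}:n\geq N\})=\{x_{-n}:n\geq N-1\}$, continuity yields $f(E_\infty)\subseteq E_\infty$ with $\ol{f(E_\infty)}=E_\infty$, and I would inductively check that each initial $x_{-n}$ lies in $E_\infty$, giving $E_\infty=E$. Hence $\ol{\{x_{-n}:n\geq N\}}=E$ for every $N$, and $f(E)\subseteq E$ with $\ol{f(E)}=E$. Writing $E=E_1\cup\cdots\cup E_r$ with generic points $y_1,\ldots,y_r$, each $\ol{f(E_i)}=\ol{\{f(y_i)\}}$ is irreducible and so lies in some $E_j$; setting $\sigma(i):=j$, the identity $\bigcup_i\ol{f(E_i)}=E=\bigcup_jE_j$ together with irreducibility of each component forces $\ol{f(E_i)}=E_{\sigma(i)}$ with $\sigma$ a bijection, and uniqueness of generic points gives $f(y_i)=y_{\sigma(i)}$. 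To see $\sigma$ is a single $r$-cycle, I would suppose two distinct $\sigma$-orbits $O_1\neq O_2$ exist; each $C_l=\bigcup_{i\in O_l}E_i$ is closed and $f$-invariant, and density of the reverse orbit in $E$ yields indices $n_1\leq n_2$ with $x_{-n_j}$ in the generic part $A_{i_j}=E_{i_j}\setminus\bigcup_{i\neq i_j}E_i$ for some $i_j\in O_j$, but then $x_{-n_1}=f^{n_2-n_1}(x_{-n_2})\in C_2$ contradicts $x_{-n_1}\in A_{i_1}$ with $i_1\notin O_2$. Thus $\{y_1,\ldots,y_r\}$ is a single $f$-cycle of length $r$.

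Turning to the convergence: since $\|\mu_n\|=1$, \hyperref[thmC]{Theorem~\ref*{thmC}} makes the sequence weakly sequentially relatively compact. I let $\mu$ denote any subsequential weak limit. A direct calculation yields $f_*\mu_n-\mu_n=\frac{1}{n}(\delta_{f(x)}-\delta_{x_{-(n-1)}})$, of strong norm at most $2/n$; since precomposition with the continuous map $f$ preserves $SC(X)$, $f_*$ is weakly continuous, so $\mu$ is $f$-invariant; and $\mathbf{1}_E\in SC(X)$ gives $\mu(E)=1$.

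The main obstacle I anticipate is the last step, identifying $\mu$ explicitly. By \hyperref[thmA]{Theorem~\ref*{thmA}}, $\mu=\sum_zc_z\delta_z$ absolutely, and invariance translates to $c_w=\sum_{z\in f^{-1}(w)}c_z$, whence $c_{f^k(z)}\geq c_z$ for all $k$. Absolute summability then forces every $z$ with $c_z>0$ to be periodic: a non-preperiodic $z$ would yield infinitely many distinct iterates each of mass $\geq c_z>0$, and a strictly preperiodic $z$ (with $f^{k_0}(z)$ the first periodic iterate, $k_0\geq 1$) makes the invariance identity at the entry point of the cycle force $c_{f^{k_0-1}(z)}=0$, contradicting $c_{f^{k_0-1}(z)}\geq c_z>0$. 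Grouping by cycles produces $\mu=\sum_\alpha c_\alpha\mu_\alpha$ with $\mu_\alpha=\frac{1}{|\alpha|}\sum_{z\in\alpha}\delta_z$ the ergodic measures classified by \hyperref[thmD]{Theorem~\ref*{thmD}}. For any cycle $\alpha\neq\{y_1,\ldots,y_r\}$, since $\sigma$ is a single $r$-cycle the cycle $\alpha$ contains no $y_i$, so $F_\alpha:=\bigcup_{z\in\alpha}\ol{\{z\}}$ is a closed $f$-invariant subset of $E$ containing none of the $E_i$, and in particular $F_\alpha\subsetneq E$. The forward trap then applies: $x_{-k}\in F_\alpha$ implies $x_{-(k-1)}=f(x_{-k})\in f(F_\alpha)\subseteq F_\alpha$, so $T_\alpha:=\{k\geq 0:x_{-k}\in F_\alpha\}$ is downward closed in $\mathbb{N}$; it cannot be all of $\mathbb{N}$ (else $E\subseteq F_\alpha\subsetneq E$), so $T_\alpha$ is finite and $\mu_n(F_\alpha)\to 0$, yielding $c_\alpha\leq\mu(F_\alpha)=0$. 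Therefore $\mu=\frac{1}{r}\sum_i\delta_{y_i}$ is the unique possible weak subsequential limit, and the full sequence $\mu_n$ converges to it weakly.
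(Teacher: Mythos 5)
Your proof is correct and takes essentially the same route as the paper: establish the cyclic-permutation structure of the irreducible components of $E=\ol{\{x_{-n}:n\geq 0\}}$, invoke the weak sequential compactness of Theorem~\ref{thmC}, show any subsequential limit $\mu$ is $f$-invariant, decompose it into ergodic components, and eliminate every cycle other than $\{y_1,\ldots,y_r\}$ by a forward-trap argument based on $\ol{f(E)}=E$. The only (harmless) deviations are that you rederive the ergodic decomposition directly from $c_w=\sum_{z\in f^{-1}(w)}c_z$ rather than citing Proposition~\ref{inv}, and you phrase the final elimination as $c_\alpha=0$ for each non-target cycle $\alpha$ via $\mu(F_\alpha)=0$, whereas the paper instead shows the closure $Z$ of the support of \emph{every} ergodic component of $\mu$ must equal $E$.
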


These two theorems show that continuous dynamical systems on Noetherian spaces have rather strong ergodic properties. Indeed, using the isomorphism $\mc{M}(X)\cong SC(X)^*$ obtained from \hyperref[thmB]{Theorem~\ref*{thmB}}, one can restate \hyperref[thmE]{Theorem~\ref*{thmE}} as follows: for any function $\tau\in SC(X)$ and any point $x\in X$, the Birkhoff time averages $n^{-1}\sum_{k=0}^{n-1} (\tau\circ f^k)(x)$ will \emph{always} converge, namely to $r^{-1}(\tau(y_1) + \cdots + \tau(y_r))$. Similarly, for \hyperref[thmF]{Theorem~\ref*{thmF}}, the reverse time averages $n^{-1}\sum_{k=0}^{n-1} \tau(x_{-k})$ converge to $r^{-1}(\tau(y_1) + \cdots + \tau(y_r))$. Finally, we complete \S7 by using these results to generalize recent work of Dinh \cite{MR2513537}.

\begin{Ack} This work was supported by the grants DMS-0602191, DMS-0901073 and DMS-1001740. I would like to thank Mattias Jonsson for his guidance and encouragement during the course of this work, and the referee for many useful comments and suggestions.
\end{Ack}

\section{Description of Borel measures}

Fix a nonempty Noetherian topological space $X$. In this section we will give an explicit description of all finite signed Borel measures on $X$. Before doing so, we fix notation which will be used throughout this article.

\begin{Not} We will denote by $\mc{M}(X)$ the vector space of finite signed Borel measures on $X$, and by $\mc{M}(X)_+$ the cone of positive Borel measures. The Borel $\sigma$-algebra of $X$ will be written $\ms{B}(X)$. We let $\ms{E} = \ms{E}(X)$ denote the collection of all nonempty irreducible closed subsets of $X$.
\end{Not}

\begin{Def} A closed set $E\subseteq X$ is said to be \emph{$\sigma$-irreducible} if it cannot be written as a countable union $E = \bigcup E_n$ of proper closed subsets $E_n\subsetneq E$. The collection of all nonempty $\sigma$-irreducible closed subsets of $X$ will be written $\ms{F} = \ms{F}(X)$.
\end{Def}

\begin{Def}\label{Type} Let $E\subseteq X$ be a nonempty closed set, and let $A\subseteq X$ be any set. We say that $A$ has \emph{type 1 intersection} with $E$ if there exist countably many closed sets $E_n\subsetneq E$ such that $E\smallsetminus \bigcup E_n\subseteq A\cap E$. We say that $A$ has \emph{type 2 intersection} with $E$ if there exist countably many closed sets $E_n\subsetneq E$ such that $A\cap E\subseteq \bigcup E_n$. 
\end{Def}

If $E$ is a nonempty closed set which is not $\sigma$-irreducible, then any set $A$ tautologically has both type 1 and type 2 intersection with $E$. Thus \hyperref[Type]{Definition~\ref*{Type}} only has content for $\sigma$-irreducible sets $E$. On the other hand, if $E$ is $\sigma$-irreducible, it is impossible for $A$ to have both type 1 and type 2 intersection with $E$ (it may have neither). Intuitively, one should think of type 1 intersections as being ``thick," with $A\cap E$ containing ``most" of $E$. Similarly, type 2 intersections are ``thin," with $A\cap E$ containing ``hardly any" of $E$. The following easy observation will be used a number of times in this section.

\begin{obs}\label{comp} Let $E\in \ms{F}$. Then a set $A\subseteq X$ has type 1 intersection with $E$ if and only if its complement $A^c = X\smallsetminus A$ has type 2 intersection with $E$.
\end{obs}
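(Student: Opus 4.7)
The plan is to observe that this is essentially a statement about set-theoretic complementation, and that the same family $\{E_n\}$ of proper closed subsets of $E$ witnesses both conditions. The $\sigma$-irreducibility of $E$ plays no role in the argument itself; it merely ensures that the two conditions are not trivially both satisfied, which is what makes the biconditional a meaningful dichotomy rather than a vacuous coincidence.

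For the forward direction, I would start with a countable family of closed sets $E_n \subsetneq E$ witnessing that $A$ has type 1 intersection with $E$, so that $E \smallsetminus \bigcup_n E_n \subseteq A \cap E$. Taking complements inside $E$ and using $E_n \subseteq E$, this inclusion becomes $A^c \cap E \subseteq E \cap \bigl(\bigcup_n E_n \cup E^c\bigr) = \bigcup_n E_n$, which is exactly the statement that $A^c$ has type 2 intersection with $E$ via the same family $\{E_n\}$.

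The converse is entirely symmetric. Starting from closed sets $E_n \subsetneq E$ with $A^c \cap E \subseteq \bigcup_n E_n$, the identity $E \smallsetminus \bigcup_n E_n = E \cap \bigl(\bigcup_n E_n\bigr)^c \subseteq E \cap (A^c \cap E)^c = E \cap A$ shows that the same $\{E_n\}$ witnesses that $A$ has type 1 intersection with $E$.

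There is no genuine obstacle here; the only thing to be a bit careful about is to keep track of whether complements are taken in $X$ or in $E$, and to use $E_n \subseteq E$ to simplify the intersections. Since the argument is symmetric and the same witnessing family works in both directions, I would present it as a single short computation rather than two separate implications.
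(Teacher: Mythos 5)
Your proof is correct, and it is the natural set-theoretic argument the paper has in mind when it labels this an ``easy observation'' without supplying a proof. Your side remark is also accurate: the computation never uses $\sigma$-irreducibility, and the hypothesis $E\in\ms{F}$ serves only to make the dichotomy nontrivial (for reducible $E$ both conditions hold vacuously, so the biconditional is true but empty).
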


\begin{prop}\label{borel} A set $A\subseteq X$ is a Borel set if and only if it has either type 1 or type 2 intersection with every $E\in \ms{F}$.
\end{prop}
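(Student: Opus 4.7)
The plan is to prove both directions separately, exploiting the interplay between the Noetherian property and the type~1/type~2 dichotomy.

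For the forward direction, I would let $\ms{S}$ denote the collection of all sets $A\subseteq X$ having type~1 or type~2 intersection with every $E\in\ms{F}$, and verify that $\ms{S}$ is a $\sigma$-algebra containing the closed sets; this immediately yields $\ms{B}(X)\subseteq\ms{S}$. Closed sets are easy: for closed $A$ and $E\in\ms{F}$, either $E\subseteq A$ (type~1, witnessed by $E_n=\emptyset$) or $A\cap E$ is itself a proper closed subset of $E$ (type~2). Closure under complements is immediate from \hyperref[comp]{Observation~\ref*{comp}}. For a countable union $A=\bigcup_n A_n$ of members of $\ms{S}$ and a fixed $E\in\ms{F}$, I would split into two cases: if some $A_n$ has type~1 intersection with $E$, then so does $A$; otherwise every $A_n$ has type~2 intersection with $E$ witnessed by families $\{E_{n,k}\}_k$, and the combined countable collection $\{E_{n,k}\}_{n,k}$ witnesses type~2 intersection of $A$ with $E$.

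For the reverse direction, I would argue by Noetherian induction on closed subsets. Assuming $A$ satisfies the hypothesis, the goal is to show $A\cap F$ is Borel for every nonempty closed $F\subseteq X$. If some closed set violated this, a minimal offender $F_0$ would exist by the Noetherian property. If $F_0$ is not $\sigma$-irreducible, write $F_0=\bigcup_n F_n$ with $F_n\subsetneq F_0$ closed; minimality forces each $A\cap F_n$, and hence $A\cap F_0=\bigcup_n(A\cap F_n)$, to be Borel. If $F_0\in\ms{F}$ and $A$ has type~2 intersection with $F_0$ witnessed by proper closed $E_n\subsetneq F_0$, then $A\cap F_0=\bigcup_n(A\cap E_n)$ is Borel by minimality. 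Finally, if $F_0\in\ms{F}$ and $A$ has type~1 intersection with $F_0$ witnessed by proper closed $E_n\subsetneq F_0$, then $A\cap F_0=(F_0\setminus\bigcup_n E_n)\cup\bigcup_n(A\cap E_n)$ is Borel, again by minimality. Each alternative contradicts the choice of $F_0$; taking $F=X$ concludes.

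The main subtlety I anticipate lies in the type~1 branch of the Noetherian induction: although $A\cap F_0$ may be large, it differs from the directly Borel set $F_0\setminus\bigcup_n E_n$ only inside the countable union $\bigcup_n E_n$ of strictly smaller closed subsets, which is precisely where the inductive hypothesis applies. The forward direction, by contrast, is routine once one carefully books the type~1/type~2 alternative under countable unions.
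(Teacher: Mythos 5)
Your proof is correct and follows essentially the same strategy as the paper: show the collection of sets with type~1 or type~2 intersection with every $E\in\ms{F}$ is a $\sigma$-algebra containing the closed sets, then argue the reverse inclusion by taking a Noetherian-minimal closed $F_0$ with $A\cap F_0$ not Borel and deriving a contradiction. The one small difference is in the reverse direction: where you treat the type~1 and type~2 cases separately (writing $A\cap F_0 = (F_0\setminus\bigcup_n E_n)\cup\bigcup_n(A\cap E_n)$ in the type~1 branch), the paper first replaces $A$ by $A^c$ — legitimate since $A\cap F_0$ is Borel in $F_0$ iff $A^c\cap F_0$ is, and $\ms{A}$ is closed under complements — and thereby reduces everything to the type~2 case. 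Your direct handling of type~1 is fine, just slightly less economical.
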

\begin{proof} Let $\ms{A}$ be the collection of all sets $A\subseteq X$ that have either type 1 or type 2 intersection with every set $E\in \ms{F}$. Note that $\ms{A}$ contains all closed subsets of $X$. We start by showing that $\ms{A}$ is a $\sigma$-algebra, and hence that $\ms{B}(X)\subseteq \ms{A}$. \hyperref[comp]{Observation~\ref*{comp}} immediately gives that $\ms{A}$ is closed under complements, so one only needs to check $\ms{A}$ is closed under countable unions. Let $A_1,A_2,\ldots \in \ms{A}$, and let $A = \bigcup A_n$. Fix $E\in \ms{F}$. It is easy to see that if one of the $A_n$ has type 1 intersection with $E$, then so does $A$. On the other hand, if all $A_n$ have type 2 intersection with $E$, then so does $A$. Thus $A$ has either type 1 or type 2 intersection with $E$, and we conclude that $A\in \ms{A}$. Therefore $\ms{B}(X)\subseteq\ms{A}$.

It remains to show that $\ms{A}\subseteq\ms{B}(X)$. Suppose for contradiction that some $A\in \ms{A}$ is not a Borel set. Let $T$ be the collection of all closed sets $E\subseteq X$ for which $A\cap E\notin \ms{B}(E)$. Since $X\in T$, the collection $T$ is nonempty. By Noetherianity, there is a minimal element $E$ of $T$. Replacing $A$ with $A^c$ if necessary, we may assume with no loss of generality that $A$ has type 2 intersection with $E$, i.e., $A\cap E\subseteq\bigcup E_n$ for closed sets $E_n\subsetneq E$. But the minimality of $E$ implies that $A\cap E_n\in \ms{B}(E_n)\subset \ms{B}(E)$, and hence $A\cap E = \bigcup (A\cap E_n)\in \ms{B}(E)$, a contradiction.
\end{proof}

\begin{Def} Let $E\in \ms{F}$ be a $\sigma$-irreducible closed set. The \emph{Dirac mass} at $E$ is the Borel measure $\delta_E$ on $X$ defined by  \[
\delta_E(A) := \begin{cases}
1 & \mbox{if $A$ has type 1 intersection with $E$.}\\
0 & \mbox{if $A$ has type 2 intersection with $E$.}
\end{cases}\]
\end{Def}

\begin{lem}\label{lemagree} Let $\mu,\nu\in \mc{M}(X)_+$ be measures that agree on $\ms{F}$. Then $\mu = \nu$.
\end{lem}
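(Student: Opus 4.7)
The plan is to proceed by contradiction using Noetherian induction, reducing the general equality $\mu(A) = \nu(A)$ for $A$ Borel down to the equality of masses on $\sigma$-irreducible closed sets which is given by hypothesis. Specifically, suppose $\mu \neq \nu$, and let $T$ be the collection of closed sets $E \subseteq X$ such that there is some Borel set $A \subseteq E$ with $\mu(A) \neq \nu(A)$. Since $X \in T$, the collection $T$ is nonempty, so by Noetherianity it has a minimal element $E$. Fix a Borel witness $A \subseteq E$ with $\mu(A) \neq \nu(A)$.

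The argument splits based on whether $E$ is $\sigma$-irreducible. First, if $E \notin \mathscr{F}$, write $E = \bigcup_n E_n$ with each $E_n \subsetneq E$ closed, and disjointify by setting $B_n = (A \cap E_n) \setminus \bigcup_{k < n} E_k$. Each $B_n$ is a Borel subset of the proper closed set $E_n$, so by minimality of $E$ in $T$ we have $\mu(B_n) = \nu(B_n)$; countable additivity over the disjoint union $A = \bigsqcup_n B_n$ then gives $\mu(A) = \nu(A)$, contradicting the choice of $A$.

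Second, if $E \in \mathscr{F}$, then by \hyperref[borel]{Proposition~\ref*{borel}} the set $A$ has either type 1 or type 2 intersection with $E$. In the type 2 case, $A \subseteq \bigcup_n E_n$ for some proper closed $E_n \subsetneq E$, and exactly the same disjointification-plus-minimality argument as above yields $\mu(A) = \nu(A)$. In the type 1 case, by \hyperref[comp]{Observation~\ref*{comp}} the complement $E \setminus A$ has type 2 intersection with $E$, so by the previous case $\mu(E \setminus A) = \nu(E \setminus A)$; combining this with the hypothesis $\mu(E) = \nu(E)$ (available because $E \in \mathscr{F}$) and additivity gives $\mu(A) = \mu(E) - \mu(E \setminus A) = \nu(E) - \nu(E \setminus A) = \nu(A)$. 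Either way we reach a contradiction, proving $\mu = \nu$.

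The only place requiring care is the bookkeeping of the disjointification: I must make sure at each stage that the pieces $B_n$ genuinely lie inside a proper closed subset of $E$ so that minimality of $E$ applies, and that the use of the hypothesis $\mu|_{\mathscr{F}} = \nu|_{\mathscr{F}}$ is deferred exactly to the type 1 case where it is needed to convert knowledge about $E \setminus A$ back into knowledge about $A$. Positivity of $\mu$ and $\nu$ is used implicitly through finiteness so that the subtraction $\mu(E) - \mu(E \setminus A)$ is unambiguous; no further use of positivity is needed.
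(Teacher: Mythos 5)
Your proof is correct, and its overall strategy matches the paper's: derive a contradiction from a minimal closed set $E$ on which $\mu$ and $\nu$ disagree, argue that $E$ must lie in $\mathscr{F}$, and then exploit the type~1/type~2 dichotomy to push the disagreement down into proper closed subsets of $E$. The execution does differ in two small but pleasant ways. First, you define $T$ as the set of closed $E$ admitting \emph{some} Borel witness $A\subseteq E$, whereas the paper fixes one bad Borel set $A$ in advance and sets $T=\{E : \mu(A\cap E)\neq\nu(A\cap E)\}$; your choice gives you the stronger inductive hypothesis that $\mu$ and $\nu$ agree on \emph{all} Borel subsets of every proper closed $F\subsetneq E$, which you then use freely. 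Second, your disjointification $B_n=(A\cap E_n)\smallsetminus\bigcup_{k<n}E_k$ places each piece inside a single proper closed $E_n$, so countable additivity finishes the job directly. The paper instead needs to show first that $E$ is irreducible (by inclusion--exclusion on a two-piece decomposition) precisely so that the finite unions $\bigcup_{n\le N}E_n$ appearing in its continuity-from-below argument are still proper subsets of $E$; your disjointification makes that preliminary irreducibility step unnecessary. Both routes are correct; yours is marginally more streamlined, while the paper's keeps the argument uniform with the analogous limit computations appearing later in \hyperref[classification]{Theorem~\ref*{classification}}.
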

\begin{proof} Suppose for contradiction that there is a Borel set $A\in \ms{B}(X)$ with $\mu(A)\neq \nu(A)$. Let $T$ be the collection of all closed sets $E$ such that $\mu(A\cap E)\neq \nu(A\cap E)$. Note that $T$ is nonempty, since $X\in T$. There is then a minimal element $E\in T$. We first note that $E$ must be irreducible. Indeed, if $E$ were reducible, say $E = E_1\cup E_2$, then the minimality of $E$ implies \begin{align*}
\mu(A\cap E) & = \mu(A\cap E_1) + \mu(A\cap E_2) - \mu(A\cap E_1\cap E_2)\\
& = \nu(A\cap E_1) + \nu(A\cap E_2) - \nu(A\cap E_1\cap E_2) = \nu(A\cap E),
\end{align*} a contradiction. Moreover, $E$ must be $\sigma$-irreducible, since if $E = \bigcup E_n$ for some closed sets $E_n\subsetneq E$, it would follow from the minimality of $E$ that \[\tag{$*$}
\mu(A\cap E) = \lim_{N\to \infty} \mu\left(A\cap \bigcup_{n=1}^N E_n\right) = \lim_{N\to \infty}\nu\left(A\cap \bigcup_{n=1}^N E_n\right) = \nu(A\cap E).\] Thus $E\in \ms{F}$. Since $\mu$ and $\nu$ agree on $\ms{F}$, this implies that $\mu(A^c\cap E)\neq \nu(A^c\cap E)$, and moreover that $E$ is minimal among closed sets with the property. Therefore, replacing $A$ with $A^c$ if necessary, we may assume $A$ has type 2 intersection with $E$, i.e., $A\cap E\subseteq\bigcup E_n$ for some closed sets $E_n\subsetneq E$. But then equation ($*$) applies again, so that $\mu(A\cap E) = \nu(A\cap E)$, a contradiction. This completes the proof.
\end{proof}

We are now in a position to give the main result of the section, a description of all finite Borel measures on $X$. 

\begin{thm} \label{classification}Every finite signed Borel measure $\mu\in \mc{M}(X)$ can be written uniquely as an absolutely convergent sum $\mu = \sum_{E\in \ms{F}} c_E\delta_E$, where $c_E\in \R$ for all $E\in \ms{F}$.
\end{thm}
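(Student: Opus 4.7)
The strategy is to reduce to positive measures via Jordan decomposition and then explicitly construct each coefficient $c_E$ as the mass of a ``generic'' subset of $E$. Fix $\mu \in \mc{M}(X)_+$. For each $E \in \ms{F}$, let $\ms{S}_E$ denote the collection of subsets of $E$ of the form $\bigcup_n E_n$ with $E_n \subsetneq E$ closed; this family is closed under countable unions, and every member is a proper subset of $E$ by $\sigma$-irreducibility. Since $\mu$ is finite, the supremum $\sup\{\mu(F) : F \in \ms{S}_E\}$ is attained by the union $F^*_E \in \ms{S}_E$ of any sequence realizing it. I set $U_E := E \setminus F^*_E$ and $c_E := \mu(U_E) \geq 0$.

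The sets $U_E$ for distinct $E \in \ms{F}$ are pairwise disjoint: if $E \not\subseteq E'$, then $E \cap E' \subsetneq E$ is a proper closed subset of $E$, so it lies in $F^*_E$ by maximality, and hence $U_E \cap E' = \emptyset$. Finiteness of $\mu$ then gives $\sum_E c_E \leq \mu(X) < \infty$ with only countably many nonzero terms, so $\sum_E c_E \delta_E$ converges absolutely. A direct check from the definitions shows $\delta_E(F) = \mathbf{1}[E \subseteq F]$ whenever $F$ is closed, so \hyperref[lemagree]{Lemma~\ref*{lemagree}} reduces the equality $\mu = \sum_E c_E \delta_E$ to the assertion that $\mu(F) = \sum_{E \in \ms{F},\, E \subseteq F} c_E$ for every $F \in \ms{F}$.

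I prove this last identity for every closed $F$ by Noetherian induction on a minimal counterexample $F_0$, treating three cases. If $F_0$ is reducible, say $F_0 = F_1 \cup F_2$ with $F_i \subsetneq F_0$, inclusion--exclusion on $\mu$ combined with the fact that any irreducible $E \subseteq F_1 \cup F_2$ must lie in one of the $F_i$ handles the claim. If $F_0 \in \ms{F}$, write $F^*_{F_0} = \bigcup_n F_n$ with $F_n \subsetneq F_0$ closed, so that $\mu(F_0) = c_{F_0} + \lim_N \mu(\bigcup_{n \leq N} F_n)$; the induction hypothesis applied to each proper closed subset $\bigcup_{n \leq N} F_n$, combined with the fact that every $E \in \ms{F}$ with $E \subsetneq F_0$ lies in $F^*_{F_0}$ (by maximality, since $E$ itself belongs to $\ms{S}_{F_0}$), yields the desired equality. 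If $F_0$ is irreducible but not $\sigma$-irreducible, the analogous argument applied to a decomposition $F_0 = \bigcup_n F_n$ works, using that any $E \in \ms{F}$ with $E \subseteq F_0$ must lie in some $F_n$ by the $\sigma$-irreducibility of $E$ itself. Uniqueness is then an easy variant: if $\sum c_E \delta_E = \sum c'_E \delta_E$, choose (by Noetherianity) a minimal $E_0 \in \ms{F}$ with $c_{E_0} \neq c'_{E_0}$ and evaluate at $E_0$; minimality forces $c_{E_0} = c'_{E_0}$, contradicting the choice of $E_0$. The most delicate step will be the $\sigma$-irreducible case, where careful bookkeeping is needed to ensure the $E = F_0$ term contributes exactly $c_{F_0}$ once --- the key point being that $F_0 \in \ms{F}$ is not in $\ms{S}_{F_0}$, so $F_0 \not\subseteq F^*_{F_0}$, cleanly separating $E = F_0$ from $E \subsetneq F_0$ in the inductive sum.
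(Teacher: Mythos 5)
Your overall strategy --- reduce to positive measures, define $c_E$ as the ``generic mass'' of $E$, and verify $\mu(F)=\sum_{E\subseteq F}c_E$ for all closed $F$ by taking a minimal counterexample --- is sound and closely parallels the paper's own argument (your $c_E$ agrees with the paper's $c_E=\inf\{\mu(E\smallsetminus F): F\subsetneq E \mbox{ closed}\}$, and the three-case Noetherian induction is the same kind of minimal-counterexample argument the paper uses after defining $\nu=\sum c_E\delta_E$). However, there is a recurring error: twice you pass from the fact that $F^*_E$ has maximal \emph{measure} in $\ms{S}_E$ to the assertion that $F^*_E$ \emph{contains} another member of $\ms{S}_E$. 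These are not the same thing, and $\ms{S}_E$ in general has no maximum for inclusion. The correct consequence of measure-maximality is: for any $G\in\ms{S}_E$, one has $G\cup F^*_E\in\ms{S}_E$, hence $\mu(G\smallsetminus F^*_E)=0$ --- but not $G\subseteq F^*_E$.

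This affects two of your steps. First, the sets $U_E$ for distinct $E$ are not literally pairwise disjoint; for instance, take $X$ uncountable with the cofinite topology and $\mu=\delta_X$. Then $F^*_X$ is some countable subset, $U_X$ is cocountable, and for any $y\in U_X$ the singleton $\{y\}\in\ms{F}$ has $U_{\{y\}}=\{y\}\subseteq U_X$. What is true is $\mu(U_E\cap U_{E'})=0$ for $E\neq E'$, which still gives $\sum c_E\leq\mu(X)$. Second, and more seriously, your claim in the $\sigma$-irreducible case that ``every $E\in\ms{F}$ with $E\subsetneq F_0$ lies in $F^*_{F_0}$'' is false; the same example gives $F_0=X$ with uncountably many singletons outside the countable set $F^*_{F_0}$. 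The statement you actually need --- and can prove --- is that every such $E$ \emph{with $c_E>0$} lies in $F^*_{F_0}$. Indeed, if $E\not\subseteq F^*_{F_0}$, then $E\cap F^*_{F_0}$ is a countable union of proper closed subsets of $E$, so $\mu\bigl(U_E\cap F^*_{F_0}\bigr)=0$ by maximality of $F^*_E$, while $\mu\bigl(U_E\smallsetminus F^*_{F_0}\bigr)\leq\mu\bigl(E\smallsetminus F^*_{F_0}\bigr)=0$ by maximality of $F^*_{F_0}$; hence $c_E=\mu(U_E)=0$. With this repair the induction step goes through, since the $E$'s with $c_E=0$ contribute nothing. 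The reducible case, the case where $F_0$ is irreducible but not $\sigma$-irreducible, and the uniqueness argument are all fine as written.
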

\begin{proof} Any measure $\mu\in \mc{M}(X)$ can be written as a difference $\mu = \mu^+ - \mu^-$ of positive measures $\mu^{\pm}\in \mc{M}(X)_+$,  so it suffices to prove the theorem for positive measures. We prove the following: any $\mu\in \mc{M}(X)_+$ can be written uniquely as a convergent sum $\sum_{E\in \ms{F}} c_E\delta_E$, where $c_E\geq 0$ for each $E\in \ms{F}$.

First suppose that $\mu$ can be written as a convergent sum $\mu = \sum_{E\in \ms{F}} c_E\delta_E$, with $c_E\geq 0$ for each $E\in \ms{F}$. Since $\mu(X) = \sum_E c_E<\infty$, it follows that at most countably many of the $c_E$ are nonzero. From this one sees that $c_E = \inf\{\mu(E\smallsetminus F) : F\subsetneq E \mbox{ closed}\}$ for all $E\in \ms{F}$. Thus the coefficients $c_E$ are completely determined by $\mu$, proving the uniqueness statement. To prove existence, we start by \emph{defining} $c_E:=\inf\{\mu(E\smallsetminus F) : F\subsetneq E \mbox{ closed}\}$ for each $E\in \ms{F}$. Suppose that $E_1,\ldots, E_n\in \ms{F}$ are distinct. Reindexing if necessary, assume that $E_i\not\subset E_j$ for any $j<i$. Since the $E_i$ are irreducible, this implies $E_i\not\subset E_1\cup\cdots\cup E_{i-1}$ for each $i$. Then \begin{align*}
\infty> \mu(X) \geq \mu(E_1\cup\cdots\cup E_n) &= \mu(E_1) + \mu(E_2\smallsetminus E_1) + \cdots + \mu(E_n\smallsetminus [E_1\cup\cdots\cup E_{n-1}])\\
&\geq c_{E_1} + \cdots + c_{E_n}.
\end{align*} It follows that $\sum_{E\in \ms{F}} c_E\leq \mu(X)$. We may then define $\nu\in \mc{M}(X)_+$ by $\nu = \sum_{E\in \ms{F}} c_E\delta_E$. To complete the proof, we must show that $\mu = \nu$. Suppose for contradiction that $\mu \neq \nu$. By \hyperref[lemagree]{Lemma~\ref*{lemagree}}, the collection $T$ of all closed sets $E$ for which $\mu(E)\neq \nu(E)$ is nonempty. Let $E$ be a minimal element of $T$. If $E$ were reducible, say $E = E_1\cup E_2$, then the minimality of $E$ would imply that \[
\mu(E) = \mu(E_1) + \mu(E_2) - \mu(E_1\cap E_2) = \nu(E_1) + \nu(E_2) - \nu(E_1\cap E_2) = \nu(E),\] a contradiction. Thus $E$ is irreducible. If $E$ were not $\sigma$-irreducible, say $E = \bigcup E_n$ for closed sets $E_n\subsetneq E$, then the minimality of $E$ would imply that \[
\mu(E) = \lim_{N\to \infty}\mu\left(\bigcup_{n=1}^N E_n\right) = \lim_{N\to \infty}\nu\left(\bigcup_{n=1}^N E_n\right) = \nu(E),\] another contradiction. Thus $E$ is $\sigma$-irreducible, and we may choose a sequence of closed sets $F_n\subsetneq E$ such that $\mu(E\smallsetminus F_n)\to c_E$  and $\nu(E\smallsetminus F_n)\to c_E$. The minimality of $E$ implies that \[
\mu(E) - c_E = \lim_{n\to \infty} \mu(F_n) = \lim_{n\to \infty} \nu(F_n) = \nu(E) - c_E,\] a contradiction. Therefore $\mu = \nu$, and the proof is complete.
\end{proof}

\begin{proof}[Proof of Theorem \ref{thmA}] \phantomsection \label{pfofthmA} Assume that $X$ is a Zariski space. We will now apply \hyperref[classification]{Theorem~\ref*{classification}} to prove \hyperref[thmA]{Theorem~\ref*{thmA}}. To start, we note that if $x\in X$, then $E = \ol{\{x\}}$ is $\sigma$-irreducible. Indeed, if $F\subsetneq E$ is a proper closed subset of $E$, then $x\notin F$, since otherwise $\ol{\{x\}} = E\subseteq F$. Therefore $x$ cannot lie in \emph{any} union $\bigcup_\alpha F_\alpha$ of proper closed subsets $F_\alpha\subsetneq E$, and, in particular, $E$ cannot be covered by proper closed subsets. Since every $E\in \ms{E}$ has a generic point $x$, we conclude that $\ms{E}\subseteq \ms{F}$, and hence $\ms{E} = \ms{F}$. Moreover, the fact that every $E\in \ms{E}$ has a \emph{unique} generic point $x$ establishes a bijection between closed sets $E\in \ms{E} = \ms{F}$ and points $x\in X$. Finally, similar reasoning establishes that a Borel set $A\in \ms{B}(X)$ will have type $1$ intersection with $E\in \ms{F}$ if and only if $A$ contains the generic point $x$ of $E$. Thus $\delta_E = \delta_x$, the usual Dirac probability measure at $x$. \hyperref[thmA]{Theorem~\ref*{thmA}} is now immediate from \hyperref[classification]{Theorem~\ref*{classification}}. 
\end{proof}


\begin{ex} We now consider an example which illustrates the importance of the set $\ms{F}$ in this description of measures. Let $X$ be a countable set, and $Y$ an uncountable set. Equip both with the cofinite topology. Then $X$ and $Y$ are both Noetherian spaces. Since the only $\sigma$-irreducible closed subsets of $X$ are points, all measures on $X$ are countable sums of Dirac masses at points. Because $Y$ is uncountable, however, it is itself $\sigma$-irreducible. Thus we obtain an additional measure $\delta_Y$ on $Y$. The Borel sets on $Y$ are the countable and cocountable subsets, and one has $\delta_Y(A) = 1$ if $A$ is uncountable and $\delta_Y(A) = 0$ if $A$ is countable.
\end{ex}

\section{Measures and semicontinuous functions}

As always, let $X$ be a nonempty Noetherian topological space. Recall that a map $f\colon X\to \R$ is said to be upper semicontinuous if $\{x\in X : f(x)\geq r\}$ is closed for all $r\in \R$. We will denote by $SC(X)$ the vector space of functions $f\colon X\to \R$ of the form $f = g - h$, where $g$ and $h$ are bounded upper semicontinuous functions on $X$. This is the smallest vector space of functions on $X$ containing the bounded upper semicontinuous functions. We equip $SC(X)$ with the supremum norm $\|f\| = \sup_X|f|$. The goal of this section is to show that for certain Noetherian spaces $X$, there is a duality $\mc{M}(X)\cong SC(X)^*$. Here $SC(X)^*$ denotes the continuous dual of $SC(X)$. This duality is analogous to the duality between Radon measures and continuous functions on compact Hausdorff spaces.

\begin{Def} A functional $\varphi\in SC(X)^*$ is \emph{positive} if $\varphi(f)\geq 0$ whenever $f\geq 0$. The set of all positive functionals forms a cone in $SC(X)^*$ which we will denote $SC(X)^*_+$.
\end{Def}

\begin{prop}\label{riesz} Every $\varphi\in SC(X)^*$ can be written as $\varphi = \varphi^+ - \varphi^-$ where $\varphi^{\pm}\in SC(X)^*_+$ and $\|\varphi^{\pm}\|\leq \|\varphi\|$.
\end{prop}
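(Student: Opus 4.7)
The plan is to adapt the classical Jordan decomposition of bounded linear functionals on a vector lattice to the normed space $SC(X)$. The first step is to verify that $SC(X)$ is itself a vector lattice under the pointwise order. The key observation is that the maximum (and minimum) of two bounded upper semicontinuous functions is again bounded upper semicontinuous, since $\{\max(g,h)\geq r\} = \{g\geq r\}\cup\{h\geq r\}$ is closed. Writing $f_1,f_2\in SC(X)$ as differences of bounded upper semicontinuous functions and clearing denominators yields $\max(f_1,f_2)\in SC(X)$, and hence $f^+, f^-, |f|\in SC(X)$ for any $f\in SC(X)$, with $\|f^\pm\|,\,\||f|\|\leq\|f\|$.

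Next I would define, for $f\geq 0$ in $SC(X)$,
\[
\varphi^+(f) := \sup\{\varphi(g) : g\in SC(X),\ 0\leq g\leq f\}.
\]
Finiteness and the bound $0\leq\varphi^+(f)\leq\|\varphi\|\|f\|$ follow at once from continuity of $\varphi$ and monotonicity of the supremum norm on the positive cone; positive homogeneity is immediate. The essential step is additivity on the cone: for $f_1,f_2\geq 0$, the inequality $\varphi^+(f_1+f_2)\geq\varphi^+(f_1)+\varphi^+(f_2)$ is trivial by summing test functions, while for the reverse direction I would use the Riesz decomposition trick — given $0\leq g\leq f_1+f_2$, set $g_1 := \min(g,f_1)\in SC(X)$ and $g_2 := g-g_1 = \max(g-f_1,0)\in SC(X)$, so that $0\leq g_i\leq f_i$ and $\varphi(g) = \varphi(g_1)+\varphi(g_2)\leq\varphi^+(f_1)+\varphi^+(f_2)$.

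With additivity and positive homogeneity in hand, I would extend $\varphi^+$ to all of $SC(X)$ by $\varphi^+(f) := \varphi^+(f^+)-\varphi^+(f^-)$; linearity of the extension follows from the standard identity $(f+g)^+ + f^- + g^- = (f+g)^- + f^+ + g^+$ after applying the additive $\varphi^+$ to both sides. Set $\varphi^- := \varphi^+-\varphi$. Positivity of $\varphi^-$ on the cone follows by taking $g=f$ in the supremum defining $\varphi^+$, and the substitution $h := f-g$ gives $\varphi^-(f) = \sup\{-\varphi(h) : 0\leq h\leq f\}\leq\|\varphi\|\|f\|$ for $f\geq 0$, matching the cone-wise bound on $\varphi^+$. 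The norm bound for arbitrary $f$ then follows from the inequality $|\psi(f)|\leq\psi(|f|)$ valid for any positive linear functional $\psi$, applied to $\varphi^\pm$ together with the cone-wise estimate on $|f|$. The main obstacle throughout is the additivity of $\varphi^+$ on the positive cone — everything downstream is bookkeeping — and this rests on the Riesz decomposition, which in turn requires that $SC(X)$ be closed under pointwise minima.
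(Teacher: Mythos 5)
Your argument is correct and takes essentially the same route as the paper: you first verify that $SC(X)$ is a Riesz space under the pointwise order (checking closure under $\max$ and $\min$), and then invoke the Jordan decomposition of a bounded functional on a Riesz space. The only difference is that the paper cites this decomposition from a textbook on vector lattices, while you carry it out in full via the $\varphi^+(f)=\sup\{\varphi(g):0\leq g\leq f\}$ construction and the Riesz decomposition property — a self-contained and accurate rendering of the same argument.
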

\begin{proof} The proof will use the basic theory of Riesz spaces. For a reference see chapter II of \cite{MR2018901}. We begin by showing that $SC(X)$ is a Riesz space. Let $f\in SC(X)$, say $f = g - h$, where $g$ and $h$ are bounded upper semicontinuous functions. Then \[
\max(f,0) = g - \min(g,h).\] Since $\min(g,h)$ is upper semicontinuous, we see that $\max(f,0)\in SC(X)$. If $f_1,f_2\in SC(X)$, then $\max(f_1,f_2) = f_1 + \max(f_2 - f_1, 0)$ and $\min(f_1,f_2) = f_1 - \max(f_1 - f_2,0)$, and hence $\max(f_1,f_2)\in SC(X)$ and $\min(f_1,f_2)\in SC(X)$. Therefore $SC(X)$ is a Riesz space. The proposition then follows from Theorem II.2.1 of \cite{MR2018901}.
\end{proof}

\begin{prop} The linear map $\Lambda\colon \mc{M}(X)\to SC(X)^*$ given by integration \[
\Lambda(\mu)(f):= \int_X f\,d\mu\] is injective. Moreover, $\Lambda$ maps the cone $\mc{M}(X)_+$ into the cone $SC(X)^*_+$.
\end{prop}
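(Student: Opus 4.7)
The plan is to handle the three required properties in turn: well-definedness of $\Lambda(\mu)$ as a continuous functional, positivity, and injectivity. The first two are routine, and the third is the only place the Noetherian structure really enters.

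First I would check that $\Lambda(\mu)$ really lands in $SC(X)^*$. Any bounded upper semicontinuous function is Borel measurable (its super-level sets are closed), so every $f \in SC(X)$ is a bounded Borel function, and the integral $\int_X f\,d\mu$ makes sense. The bound $|\Lambda(\mu)(f)| \leq \|f\| \cdot |\mu|(X)$ shows $\Lambda(\mu)$ is continuous, and linearity in $\mu$ is clear. Positivity is immediate: if $\mu \in \mc{M}(X)_+$ and $f \geq 0$, then $\int f\,d\mu \geq 0$, so $\Lambda(\mu) \in SC(X)^*_+$.

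For injectivity, the key observation is that for any closed set $E \subseteq X$, the indicator $\chi_E$ is bounded and upper semicontinuous (its super-level sets are $\emptyset$, $E$, or $X$), so $\chi_E \in SC(X)$. Thus, if $\Lambda(\mu) = 0$, then evaluating against $\chi_E$ gives $\mu(E) = 0$ for every closed set $E$. Decomposing $\mu = \mu^+ - \mu^-$ via the Jordan decomposition, we obtain $\mu^+(E) = \mu^-(E)$ for every closed $E$, and in particular for every $E \in \ms{F}$. Lemma~\ref{lemagree} then forces $\mu^+ = \mu^-$, so $\mu = 0$.

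The main obstacle, if there is one, is merely ensuring that enough functions live in $SC(X)$ to detect $\mu$; this reduces to the one-line remark that indicators of closed sets are upper semicontinuous, after which Lemma~\ref{lemagree} does all the work.
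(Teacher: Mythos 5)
Your proof is correct and follows essentially the same route as the paper: reduce injectivity to testing against characteristic functions of closed sets and then invoke Lemma~\ref{lemagree}. You are slightly more careful than the paper in one spot, namely in making the Jordan decomposition $\mu = \mu^+ - \mu^-$ explicit before applying Lemma~\ref{lemagree}, which is stated only for positive measures; the paper leaves that step implicit.
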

\begin{proof} For any closed set $E\subseteq X$, the characteristic function $\chi_E$ of $E$ is upper semicontinuous. Since $\Lambda(\mu)(\chi_E) = \mu(E)$ for any closed set $E$, \hyperref[lemagree]{Lemma~\ref*{lemagree}} implies that $\Lambda$ is injective. The fact that $\Lambda$ maps positive measures to positive functionals is obvious.
\end{proof}

\begin{lem}\label{lemagree2} Let $\mc{B}$ denote the collection of characteristic functions $\chi_E$ of nonempty irreducible closed sets $E$. Then $\mc{B}$ is a linearly independent family which spans a dense subspace of $SC(X)$. In particular, any two $\varphi,\psi\in SC(X)^*$ which agree on $\mc{B}$ must be equal.
\end{lem}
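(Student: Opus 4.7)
The plan is to handle the three claims separately: linear independence by extracting a point that lies in only one of the sets, density by combining two approximation steps (approximating bounded upper semicontinuous functions by ``layer cake'' sums over closed level sets, then rewriting each such characteristic function in terms of irreducible ones via Noetherian induction), and deducing the ``in particular'' clause from continuity.

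For linear independence, I would take a hypothetical finite relation $\sum_{i=1}^n c_i \chi_{E_i} = 0$ with the $E_i \in \ms{E}$ distinct and pick $E_k$ maximal under inclusion among them. For each $j \neq k$ the set $E_j \cap E_k$ is strictly contained in $E_k$, because either $E_j \not\subseteq E_k$ or $E_j \subsetneq E_k$ by maximality. Since $E_k$ is irreducible and nonempty, the finite union $\bigcup_{j \neq k}(E_j \cap E_k)$ is a proper closed subset of $E_k$, so some $x \in E_k$ avoids every other $E_j$; evaluating the relation at $x$ gives $c_k = 0$. Iterating (or inducting on $n$) yields independence.

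For density, the key substep is the claim that for every closed set $E \subseteq X$, the function $\chi_E$ lies in the $\Z$-linear span of $\mc{B}$. This I would prove by Noetherian induction on $E$ (using that no infinite strictly descending chain of closed sets exists). If $E$ is irreducible we are done (the empty case gives $\chi_\emptyset = 0$); otherwise $E = E_1 \cup E_2$ with both $E_i \subsetneq E$, and the inclusion--exclusion identity
\[
\chi_E = \chi_{E_1} + \chi_{E_2} - \chi_{E_1 \cap E_2}
\]
reduces the problem to the proper closed subsets $E_1$, $E_2$, and $E_1 \cap E_2 \subseteq E_1 \subsetneq E$. With this in hand, take an arbitrary $f \in SC(X)$, write $f = g - h$ with $g,h$ bounded upper semicontinuous, and reduce to approximating a bounded upper semicontinuous $g$ with $a \le g \le b$. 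Given $\varepsilon > 0$, pick $a = t_0 < t_1 < \cdots < t_n = b$ with $t_i - t_{i-1} < \varepsilon$ and form the closed sets $A_i = \{g \ge t_i\}$ (closed by upper semicontinuity, with $A_0 = X$). Then
\[
\tilde{g} := t_0\,\chi_X + \sum_{i=1}^n (t_i - t_{i-1})\,\chi_{A_i}
\]
satisfies $\|g - \tilde{g}\| < \varepsilon$ by a straightforward check on the level sets $A_i \smallsetminus A_{i+1}$. By the previous substep each $\chi_{A_i}$ and $\chi_X$ lies in the span of $\mc{B}$, so $\tilde{g}$ does too, and density follows.

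The ``in particular'' statement is then immediate: if $\varphi, \psi \in SC(X)^*$ agree on $\mc{B}$, then by linearity they agree on $\operatorname{span}(\mc{B})$, and by continuity they agree on its closure, which is all of $SC(X)$. The step I expect to be the main obstacle is verifying that the inclusion--exclusion induction is genuinely well-founded --- specifically, making sure that $E_1 \cap E_2$ is a \emph{proper} subset of $E$ so that the induction hypothesis applies; this is where the choice $E_1, E_2 \subsetneq E$ (which forces $E_1 \not\subseteq E_2$ and hence $E_1 \cap E_2 \subsetneq E_1$) is essential, and it is worth stating carefully to avoid a circular reduction.
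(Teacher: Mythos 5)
Your proof is correct and follows essentially the same route as the paper: linear independence by extracting a point of a maximal $E_k$ not covered by the others, density by first showing (via a Noetherian induction and inclusion--exclusion on a minimal counterexample) that $\chi_F$ lies in the span of $\mc{B}$ for every closed $F$, then a layer-cake approximation, and the dual statement by continuity. One small point in your favor: you apply the layer-cake construction to the bounded upper semicontinuous pieces $g$ and $h$ of a decomposition $f = g - h$, where the level sets $\{g \ge t_i\}$ are genuinely closed, whereas the paper applies it directly to $f \in SC(X)$ and writes $\chi_{\{f \ge r_i\}}$ even though $\{f \ge r_i\}$ need not be closed for a general $f \in SC(X)$ --- your version is the tight one.
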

\begin{proof} We first prove linear independence. Suppose for contradiction that one has a linear dependence $c_1\chi_{E_1} + \cdots + c_r\chi_{E_r} = 0$ among elements of $\mc{B}$, with $c_i\neq 0$ for each $i$. Considering the supports of these functions, it follows that $E_i\subseteq\bigcup_{j\neq i} E_j$ for each $i$, contradicting the irreducibility of the $E_i$. Thus $\mc{B}$ is a linearly independent family.

Next we prove that for any closed (not necessarily irreducible) set $F$, the characteristic function $\chi_F$ lies in the span of $\mc{B}$. Suppose for contradiction that this is not the case, and let $T$ be the collection of closed sets $F$ for which $\chi_F$ does not lie in the span of $\mc{B}$. Choose a minimal element $F$ of $T$. Clearly $F$ cannot be irreducible, or else $\chi_F\in \mc{B}$ by definition. Let $F = F_1\cup F_2$ be a nontrivial decomposition of $F$. But then $\chi_F = \chi_{F_1} + \chi_{F_2} - \chi_{F_1\cap F_2}$ lies in the span of $\mc{B}$ by the minimality of $F$, a contradiction. Thus the span of $\mc{B}$ contains all functions $\chi_F$ for $F$ closed.

To complete the proof, we only need to show that the span of $\mc{B}' = \{\chi_F : F\mbox{ closed}\}$ is dense in $SC(X)$. Let $f\in SC(X)$, let $a = \inf_X f$, and let $b = \sup_X f$. We may suppose without loss of generality that $a \neq b$, as otherwise $f = a\chi_X$ clearly lies in the span of $\mc{B}'$. For any partition $\pi = \{a = r_0 < r_1<\cdots < r_n = b\}$ of the interval $[a,b]$, let \[
f_\pi := r_0\chi_X + \sum_{i=1}^n (r_i - r_{i-1})\chi_{\{f\geq r_i\}}.\] By construction, $f_\pi\in \mathrm{span}(\mc{B}')$ for each partition $\pi$ and $\|f - f_\pi\|\leq \mathrm{mesh}(\pi)$. Thus as we let $\mathrm{mesh}(\pi)\to 0$, the functions $f_\pi$ converge uniformly to $f$.
\end{proof}


Until this point we have made no assumption on $X$ beyond it being Noetherian. In order to proceed, we now need to assume that $X$ has the following additional topological property.

\begin{Def} A Noetherian topological space $X$ is said to be \emph{complete} if every irreducible closed subset of $X$ is $\sigma$-irreducible, that is, if $\ms{E} = \ms{F}$.
\end{Def}

We saw in the \hyperref[pfofthmA]{proof of Theorem A} that Zariski spaces are always complete. Thus in order to prove \hyperref[thmB]{Theorem~\ref*{thmB}}, it suffices to show $\mc{M}(X)\cong SC(X)^*$ for complete spaces, or equivalently that $\Lambda$ is surjective for complete spaces. Using \hyperref[riesz]{Proposition~\ref*{riesz}}, we only must show that $\Lambda$ maps $\mc{M}(X)_+$ surjectively onto $SC(X)^*_+$. This is our next theorem.

\begin{thm}Suppose $X$ is complete. Then $\Lambda\colon\mc{M}(X)_+\to SC(X)^*_+$ is surjective.
\end{thm}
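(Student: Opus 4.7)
Given $\varphi\in SC(X)^*_+$, the plan is to construct $\mu$ explicitly by prescribing its coefficients in the canonical decomposition of \hyperref[classification]{Theorem~\ref*{classification}}. Since $X$ is complete, $\mathscr{F}=\mathscr{E}$, and $\chi_E-\chi_F=\chi_{E\smallsetminus F}$ lies in $SC(X)$ whenever $F\subseteq E$ are closed. Motivated by the formula $c_E=\inf\{\mu(E\smallsetminus F):F\subsetneq E\text{ closed}\}$ that emerged in the proof of \hyperref[classification]{Theorem~\ref*{classification}}, I would define
\[
c_E:=\inf\bigl\{\varphi(\chi_E-\chi_F):F\subsetneq E\text{ closed}\bigr\}\geq 0
\]
for every $E\in\mathscr{E}$, and take $\nu:=\sum_{E\in\mathscr{E}}c_E\delta_E$ once summability is in hand.

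The first step is the bound $\sum_{E\in\mathscr{E}}c_E\leq\varphi(\chi_X)$. Given finitely many distinct $E_1,\ldots,E_n\in\mathscr{E}$, reindex so that $E_i\not\subseteq E_j$ for $j<i$, and set $F_i:=E_i\cap(E_1\cup\cdots\cup E_{i-1})$; irreducibility makes each $F_i$ a proper closed subset of $E_i$, and the sets $E_i\smallsetminus F_i$ are pairwise disjoint, so $\sum_i(\chi_{E_i}-\chi_{F_i})\leq\chi_X$. Positivity of $\varphi$ together with $c_{E_i}\leq\varphi(\chi_{E_i}-\chi_{F_i})$ then gives $\sum_ic_{E_i}\leq\varphi(\chi_X)$. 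Taking the supremum over finite subsets shows that $\nu$ is a well-defined positive finite Borel measure, via the existence direction in \hyperref[classification]{Theorem~\ref*{classification}}.

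The main step is identifying $\Lambda(\nu)=\varphi$. By \hyperref[lemagree2]{Lemma~\ref*{lemagree2}} it suffices to check $\nu(E)=\varphi(\chi_E)$ for every $E\in\mathscr{E}$, and I would run the Noetherian-minimality argument of \hyperref[lemagree]{Lemma~\ref*{lemagree}} on $T:=\{E\text{ closed}:\nu(E)\neq\varphi(\chi_E)\}$. Reducibility of a minimal element $E\in T$ is ruled out by inclusion-exclusion on any decomposition $E=E_1\cup E_2$, so $E\in\mathscr{E}=\mathscr{F}$; minimality then yields $\nu(F)=\varphi(\chi_F)$ for every proper closed $F\subsetneq E$, and linearity of $\varphi$ gives
\[
c_E=\varphi(\chi_E)-\sup_{F\subsetneq E}\varphi(\chi_F)=\varphi(\chi_E)-\nu(E)+\inf_{F\subsetneq E}\nu(E\smallsetminus F).
\]
The contradiction reduces to the claim $\inf_{F\subsetneq E}\nu(E\smallsetminus F)=c_E$. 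The inequality $\geq$ is clear since $E\not\subseteq F$ forces the summand $c_E$ to appear in $\nu(E\smallsetminus F)$; for the reverse, the $E'\subsetneq E$ in $\mathscr{E}$ with $c_{E'}>0$ are countable by summability, so enumerating them $E_1',E_2',\ldots$ and setting $F_N:=E_1'\cup\cdots\cup E_N'$ (proper closed in $E$ by irreducibility), the excess $\nu(E\smallsetminus F_N)-c_E$ is bounded by the tail $\sum_{k>N}c_{E_k'}\to 0$.

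\textbf{Main obstacle.} The delicate ingredient is the summability bound, which requires simultaneously arranging the approximating $F_i$ so that the functions $\chi_{E_i\smallsetminus F_i}$ have pairwise disjoint supports; this is where the irreducibility of each $E_i$ is used in an essential way. Once summability is secured, verifying $\Lambda(\nu)=\varphi$ is routine Noetherian induction, with the only mild twist being the use of countability of the support of $\nu$ to package the ``heavy'' proper subvarieties of $E$ into a single proper closed subset $F_N$.
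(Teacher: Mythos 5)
Your proof is correct and follows essentially the same route as the paper's: define $c_E$ as the infimum of $\varphi(\chi_E-\chi_F)$ over proper closed $F\subsetneq E$, establish summability via the telescoping/disjoint-supports decomposition of $\chi_{E_1\cup\cdots\cup E_n}$, set $\nu=\sum_{E}c_E\delta_E$, and run a Noetherian-minimality argument against \hyperref[lemagree2]{Lemma~\ref*{lemagree2}}. The only presentational difference is in the endgame: the paper chooses a single sequence $F_n\subsetneq E$ along which both $\varphi(\chi_E-\chi_{F_n})\to c_E$ and $\nu(E\smallsetminus F_n)\to c_E$ (possible since both infima are monotone in $F$), whereas you make explicit the identity $\inf_{F\subsetneq E}\nu(E\smallsetminus F)=c_E$ — already noted in the proof of \hyperref[classification]{Theorem~\ref*{classification}} — and substitute algebraically.
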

\begin{proof} Let $\varphi\in SC(X)_+^*$. For each $E\in \ms{E}$, define $c_E = \inf\{\varphi(\chi_E - \chi_F) : F\subsetneq E \mbox{ closed}\}\geq 0$. Suppose $E_1,\ldots, E_n\in \ms{E}$ are distinct. Reindexing if necessary, we may assume $E_i\not\subset E_j$ for $j<i$. Since the $E_i$ are irreducible, this implies $E_i\not\subset E_1\cup\cdots\cup E_{i-1}$ for all $i$. Then \begin{align*}
\infty>\|\varphi\|\geq \varphi(\chi_{E_1\cup\cdots\cup E_n}) &= \varphi(\chi_{E_1}) + \varphi(\chi_{E_2} - \chi_{E_1\cap E_2}) + \cdots + \varphi(\chi_{E_n} - \chi_{E_n\cap (E_1\cup\cdots\cup E_{n-1})})\\
&\geq c_{E_1} + \cdots + c_{E_n}.
\end{align*} Thus $\sum_{E\in \ms{E}} c_E \leq \|\varphi\|$. We may then define a measure $\mu = \sum_{E\in \ms{E}} c_E\delta_E\in \mc{M}(X)_+$. We will show that $\varphi = \Lambda(\mu)$. Suppose that $\varphi\neq \Lambda(\mu)$. By \hyperref[lemagree2]{Lemma~\ref*{lemagree2}}, the collection $T$ of closed sets $E$ for which $\varphi(\chi_E)\neq \Lambda(\mu)(\chi_E)$ is nonempty. Let $E$ be a minimal element of $T$. Suppose that $E$ were reducible, say $E = E_1\cup E_2$. Then by the minimality of $E$, \begin{align*}
\varphi(\chi_E) &= \varphi(\chi_{E_1}) + \varphi(\chi_{E_2}) - \varphi(\chi_{E_1\cap E_2})\\
& = \Lambda(\mu)(\chi_{E_1}) + \Lambda(\mu)(\chi_{E_2}) - \Lambda(\mu)(\chi_{E_1\cap E_2}) = \Lambda(\mu)(\chi_E),
\end{align*} a contradiction. Therefore $E$ is irreducible, and, because $X$ is complete, also $\sigma$-irreducible. We may then choose a sequence $F_n\subsetneq E$ of closed sets such that $\varphi(\chi_E - \chi_{F_n})\to c_E$ and $\Lambda(\mu)(\chi_E - \chi_{F_n}) = \mu(E\smallsetminus F_n)\to c_E$. But then by the minimality of $E$, we see that \[
\varphi(\chi_E) - c_E = \lim_{n\to \infty} \varphi(\chi_{F_n}) = \lim_{n\to \infty}\Lambda(\mu)(\chi_{F_n}) = \Lambda(\mu)(\chi_E) - c_E,\] a contradiction. Therefore $\varphi = \Lambda(\mu)$, and $\Lambda$ is surjective.
\end{proof}

\begin{ex} We now give an example showing how $\Lambda$ can fail to be surjective when $X$ is not complete. Let $X$ be a countably infinite set, equipped with the cofinite topology. It is easy to see that $SC(X)$ consists of functions $f\colon X\to \R$ of the form $f(x) = c + c_x$, where $c\in \R$ is a constant independent of $x$, and where $\{x\in X : |c_x|>\eps\}$ is finite for any $\eps>0$. Let $\varphi\in SC(X)^*$ be the positive bounded linear functional defined by $\varphi(f) = c$. We will show that $\varphi$ does not lie in the image of $\Lambda$. Indeed, suppose $\varphi = \Lambda(\mu)$ for some measure $\mu$. By \hyperref[classification]{Theorem~\ref*{classification}} one has $\mu = \sum_{x\in X} a_x\delta_x$. Then $0 = \varphi(\chi_x) = \Lambda(\mu)(\chi_x) = \mu(x) = a_x$ for each $x\in X$, so $\mu = 0$. Therefore $\varphi = \Lambda(\mu) = 0$, a clear contradiction. As we will see in \S5, $\Lambda$ will \emph{always} fail to be surjective when $X$ is not complete.
\end{ex}

\section{Compactness properties of measures}

In \S3, we saw that for complete Noetherian topological spaces $X$ there is a natural duality $\mc{M}(X)\cong SC(X)^*$. Via this isomorphism, one can pull back both the strong and the weak-$*$ topologies on $SC(X)^*$ to $\mc{M}(X)$. In order to keep up the analogy with Radon measures, we will refer to the weak-$*$ topology simply as the \emph{weak topology} on $\mc{M}(X)$. The convergence of a sequence in the weak topology has a simple characterization which follows from \hyperref[lemagree2]{Lemma~\ref*{lemagree2}}: a sequence $\mu_n$ of measures converges weakly to a measure $\mu$ if and only if $\mu_n(E)\to \mu(E)$ for every (irreducible) closed set $E$.

In this section we study compactness properties of $\mc{M}(X)$ in its weak topology, proving \hyperref[thmC]{Theorem~\ref*{thmC}}. To get us started, note that Alaoglu's theorem implies the following.

\begin{thm} Suppose that $X$ is a complete Noetherian space. Then any weakly closed and strongly bounded subset of $\mc{M}(X)$ is compact in the weak topology. In particular, the set of probability measures on $X$ is weakly compact.
\end{thm}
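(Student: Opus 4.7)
The plan is to reduce everything to Alaoglu's theorem via the duality $\mc{M}(X)\cong SC(X)^*$ from Theorem~\ref{thmB}, which applies because $X$ is complete. Under this identification the weak topology on $\mc{M}(X)$ is by definition the weak-$*$ topology on $SC(X)^*$, and the strong topology corresponds to the operator norm.

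For the first assertion, let $S\subseteq \mc{M}(X)$ be weakly closed and strongly bounded, so that $S\subseteq B_r:=\{\mu:\|\mu\|\leq r\}$ for some $r>0$. Alaoglu's theorem says that the closed ball of any radius in the dual of a normed space is compact in the weak-$*$ topology, hence $B_r$ is weakly compact in $\mc{M}(X)$. Since $S$ is a weakly closed subset of a weakly compact set, $S$ is itself weakly compact.

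For the statement about probability measures, it suffices to verify that the set $\mc{P}(X)$ of Borel probability measures is both strongly bounded and weakly closed, for then the first part applies. Strong boundedness is immediate: for any positive measure $\mu$ and any $f\in SC(X)$ one has $|\int f\,d\mu|\leq \|f\|\cdot\mu(X)$, so every probability measure has operator norm at most $1$. For weak closedness I would write $\mc{P}(X)=\mc{M}(X)_+\cap\{\mu:\mu(X)=1\}$. The second set is the preimage of $\{1\}$ under the weakly continuous linear functional $\mu\mapsto \int\chi_X\,d\mu$. For the first, under $\Lambda$ the positive cone $\mc{M}(X)_+$ corresponds to the positive cone $SC(X)^*_+$, and the latter can be written as the intersection $\bigcap_{f\in SC(X),\,f\geq 0}\{\varphi:\varphi(f)\geq 0\}$ of weakly closed half-spaces, hence is weakly closed.

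The argument is essentially a translation of the statement to the standard dual-space setting, with Alaoglu doing the real work, so I do not anticipate any substantive obstacle. The only point requiring any care is the weak closedness of $\mc{M}(X)_+$, which is why I would prefer to verify it on the $SC(X)^*$ side (where it follows formally) rather than trying to argue directly on the measure side, where positivity on individual closed sets does not obviously characterize positivity of the signed measure.
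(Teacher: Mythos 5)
Your proposal is correct and takes exactly the route the paper has in mind: the paper simply remarks that the statement follows from Alaoglu's theorem via the duality $\mc{M}(X)\cong SC(X)^*$, and your argument fills in the (routine) details of that deduction, including the correct observation that positivity of measures should be checked on the $SC(X)^*$ side, where it is an intersection of weak-$*$ closed half-spaces.
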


Unfortunately, it is not obvious that compactness can be replaced with \emph{sequential compactness} in the previous theorem. To complete the proof of \hyperref[thmC]{Theorem~\ref*{thmC}} we must therefore prove the following theorem.

\begin{thm} Suppose that $X$ is a complete Noetherian space. Then any weakly closed and strongly bounded subset of $\mc{M}(X)$ is sequentially compact in the weak topology. In particular, the set of probability measures on $X$ is weakly sequentially compact.
\end{thm}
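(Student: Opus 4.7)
By Proposition \ref{riesz}, we reduce to the case of positive measures $\mu_n\in \mc{M}(X)_+$ with $\|\mu_n\|\leq M$. Theorem \ref{classification} writes $\mu_n = \sum_{E\in \ms{F}} c_E^n\delta_E$ with countable support, so $S_0 := \bigcup_n \{E : c_E^n > 0\}$ is a countable subset of $\ms{F}$. A standard diagonal argument on the bounded scalar sequences $(c_E^n)_n$, $E\in S_0$, extracts a subsequence (still written $\mu_n$) with $c_E^n\to c_E\geq 0$ for each $E\in S_0$; Fatou gives $\sum c_E\leq M$.

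The plan is then to run a Noetherian induction on $X$ showing that a further subsequence makes $\mu_n(F)$ converge for every closed $F\subseteq X$. In the reducible case $X = X_1\cup X_2$, I would apply the inductive hypothesis successively to $X_1$ and $X_2$; the identity $\mu_n(F) = \mu_n(F\cap X_1) + \mu_n(F\cap X_2) - \mu_n(F\cap X_1\cap X_2)$ assembles weak convergence on $X$. In the irreducible case, $X\in\ms{F}$ by completeness, and I would use Noetherianity of the poset of closed subsets to extract a sub-subsequence $\mu^{(0)}_n$ whose support closure $K := \overline{\bigcup_n \operatorname{supp}(\mu^{(0)}_n)}$ is minimal, and hence preserved under further sub-subsequencing. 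If $K\subsetneq X$, the inductive hypothesis on the proper closed subset $K$ completes the proof. Otherwise $K = X$; after extracting once more so that $\mu_n(X)\to m$, the candidate weak limit is $\mu := \sum_{E\in S_0} c_E\delta_E + \alpha\,\delta_X$, where $\alpha := m - \sum_{E\in S_0} c_E \geq 0$ accounts for the mass escaping to the generic stratum.

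The hard step is verifying weak convergence to $\mu$ in the case $K = X$, i.e., showing $\mu_n(F)\to \mu(F)$ for every proper closed $F\subsetneq X$. Since $\delta_X(F) = 0$, one has $\mu(F) = \sum_{E\in S_0,\, E\subseteq F} c_E$, and Fatou provides $\liminf \mu_n(F)\geq \mu(F)$. If $\limsup \mu_n(F) > \mu(F)$ for some such $F$, then applying the inductive hypothesis to the restricted sequence on the proper closed subset $F$ would yield a sub-subsequence whose restrictions on $F$ converge weakly to a measure $\nu\in \mc{M}(F)_+$ with $\nu(F) > \mu(F)$. Such a $\nu$ would necessarily carry positive mass at some $E^*\in \ms{F}$ with $E^*\subseteq F\subsetneq X$ and $E^*\notin S_0$; a still-finer extraction concentrating on indices where this emergent mass is realized would then produce a sub-subsequence whose supports lie in a proper closed subset of $X$, contradicting the minimality of $K = X$. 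This contradiction yields the reverse Fatou inequality and completes the induction.
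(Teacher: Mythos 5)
Your approach (Noetherian induction on closed subsets together with a minimal support-closure extraction) is genuinely different from the paper's, which sets up a Zorn's lemma argument on a partially ordered set of pairs $([I],\mu)$ consisting of an infinite index set $I$ and a ``partial limit'' $\mu$. However, the heart of your argument, the case $K = X$, has a real gap. The candidate limit $\mu = \sum_{E\in S_0} c_E\delta_E + \alpha\delta_X$ is not correct in general: escaping mass need not land only at the generic stratum of $X$, but can accumulate at intermediate $\sigma$-irreducible $E^* \subsetneq X$ with $E^*\notin S_0$. Your contradiction argument does not eliminate this, because emergent mass of $\nu$ at such an $E^*$ does not force the supports of a sub-subsequence of the $\mu_n$ into a proper closed subset of $X$: only part of each $\mu_n$'s mass need accumulate along $F$, while the rest can remain Zariski dense in $X$.

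For a concrete failure, take $X = \mathrm{Spec}\,\mathbb{C}[x,y]$ (a complete Zariski space since $\mathbb{C}$ is uncountable) and $\mu_n = \tfrac{1}{2}\delta_{(n,0)} + \tfrac{1}{2}\delta_{q_n}$ with $q_n = (n, 2^n)$. No infinite subset of $\{q_n\}$ lies on a curve (a nonzero polynomial cannot vanish at infinitely many $(n,2^n)$), so the closure of the union of supports equals $X$ along every subsequence: $K = X$ is unavoidable. Every $c^n_E \to 0$, so your candidate reduces to $\alpha\delta_X = \delta_\eta$ with $\eta$ the generic point; but $\mu_n(L) = \tfrac{1}{2}$ identically for the line $L = \{y = 0\}$, and $\mu_n$ in fact converges weakly to $\tfrac{1}{2}\delta_{\eta_L} + \tfrac{1}{2}\delta_\eta$, where $\eta_L$ is the generic point of $L$. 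This intermediate-strata accumulation is precisely what the paper's Zorn's lemma argument is engineered to handle: mass must be pinned down stratum by stratum by repeated re-extraction of subsequences, and tracking both the surviving index set and the partial limit at each (possibly transfinite) stage is the actual content of the proof.
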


To prove this theorem, it suffices to show that the closed unit ball of $\mc{M}(X)$ is weakly sequentially compact. Suppose we are given a sequence $\mu_n$ of measures lying in the closed unit ball of $\mc{M}(X)$. By \hyperref[riesz]{Proposition~\ref*{riesz}}, we note that one can always write any of the $\mu_n$ as a difference $\mu_n = \mu_n^+ - \mu_n^-$ of positive measures lying in the closed unit ball. Thus without loss of generality we may assume that each $\mu_n$ is positive. We will therefore prove the following.

\begin{thm}\label{seq} Let $X$ be a complete Noetherian space. Any sequence $\mu_n$ of positive measures  lying in the closed unit ball of $\mc{M}(X)$ has a weakly convergent subsequence.
\end{thm}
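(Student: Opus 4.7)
The plan is to reduce the extraction to a countable diagonal Bolzano--Weierstrass argument by exploiting two features of the setup: each $\mu_n$ has countable support by \hyperref[classification]{Theorem~\ref*{classification}}, and the Noetherian hypothesis forces unions taken from a countable family of closed sets to stabilize into finite unions. Concretely, write $\mu_n = \sum_{E\in \ms{S}_n} c_E^{(n)}\delta_E$ via \hyperref[classification]{Theorem~\ref*{classification}} with each $\ms{S}_n\subseteq \ms{F}$ countable, set $\ms{S} = \bigcup_n \ms{S}_n$, and let $\mc{G}$ denote the collection of all finite unions of elements of $\ms{S}$, together with $\emptyset$. Since $\ms{S}$ is countable, so is $\mc{G}$.

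The central structural claim is that for every closed set $K\subseteq X$, the set $G_K := \bigcup\{E\in \ms{S} : E\subseteq K\}$ belongs to $\mc{G}$ and satisfies $\mu_n(K) = \mu_n(G_K)$ for all $n$. That $G_K\in \mc{G}$ follows from the Noetherian ascending chain condition applied to the increasing chain of finite sub-unions indexed by an enumeration of $\ms{S}$: the chain stabilizes, and $G_K$ equals one of its terms, so $G_K$ is a finite union of elements of $\ms{S}$. The equality $\mu_n(K) = \mu_n(G_K)$ unpacks the classification of $\mu_n$: both sides equal $\sum_{E\in \ms{S}_n,\, E\subseteq K} c_E^{(n)}$, using that each $E\in \ms{S}_n\subseteq \ms{F}$ is irreducible, so that being contained in the finite union $G_K$ forces $E$ into one of its pieces, and hence into $K$. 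Thus $\mc{G}$ is a countable family of ``test'' closed sets that captures the value of $\mu_n(K)$ for every closed $K$.

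Given the structural claim, a diagonal Bolzano--Weierstrass extraction over the countable family $\mc{G}$ produces a subsequence $(\mu_{n_k})$ along which $\mu_{n_k}(G)$ converges for every $G\in \mc{G}$; by the claim, $\mu_{n_k}(K)$ then converges for every closed $K\subseteq X$. Since $\mathrm{span}\{\chi_K : K \text{ closed}\}$ is dense in $SC(X)$ by \hyperref[lemagree2]{Lemma~\ref*{lemagree2}}, and the functionals $\Lambda(\mu_{n_k})$ are uniformly bounded in $SC(X)^*$, a standard $3\eps$-argument extends the pointwise limits to a bounded positive linear functional on all of $SC(X)$. Because $X$ is complete, \hyperref[thmB]{Theorem~\ref*{thmB}} realizes this functional as $\Lambda(\mu)$ for some $\mu\in \mc{M}(X)_+$, and thus $\mu_{n_k}\to \mu$ weakly. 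The main obstacle, and the key insight, is the structural claim: the countability of $\mc{G}$ depends essentially on the Noetherian hypothesis, and without it one could not hope to reduce the potentially uncountable family of closed subsets of $X$ to a countable test family.
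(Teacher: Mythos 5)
The structural claim at the heart of your argument is false, and the error is a confusion about which chain condition Noetherian spaces satisfy: a Noetherian topological space satisfies the \emph{descending} chain condition on closed sets, not the ascending one, so an increasing chain of finite unions $E_1\subseteq E_1\cup E_2\subseteq E_1\cup E_2\cup E_3\subseteq\cdots$ of closed subsets of $K$ has no reason to stabilize. Concretely, let $X$ be a Zariski space with a single generic point $\eta$ and infinitely many closed points $x_1, x_2, \ldots$ (for instance the scheme $\mathbb{A}^1_{\mathbb{C}}$), and let $\mu_n = \delta_{x_n}$. Then $\ms{S} = \{\{x_1\}, \{x_2\}, \ldots\}$, and taking $K = X$ the set $G_K = \{x_1, x_2, \ldots\}$ is neither closed nor a finite union, so $G_K \notin \mc{G}$. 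Here $\mc{G}$ consists of the finite sets of closed points $x_i$, on each of which $\mu_n(G)\to 0$, yet $\mu_n\to\delta_\eta$ weakly: the mass escapes to the generic point, to which your countable test family is blind. Equivalently, the interchange of limits your argument needs, $\lim_n\mu_n(K) = \sup_m\lim_n\mu_n(E_1\cup\cdots\cup E_m)$, is exactly what fails.

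This escape of mass toward more generic points is precisely the phenomenon the paper's proof is built to handle, and a plain diagonal extraction cannot detect it. The paper instead runs a Zorn's lemma construction over pairs $([I],\mu)$: it repeatedly passes to a further subsequence and augments a candidate limit by a Dirac mass $\delta_F$ at a minimal closed set $F$ on which convergence has so far failed, while tracking the growing collection $A([I],\mu)$ of closed sets on which the current subsequence does converge. Your observation that $\mu_n(K)$ depends only on the coefficients $c_E^{(n)}$ with $E\in\ms{S}_n$ and $E\subseteq K$ is correct, and it is indeed useful that $\ms{S}$ is countable; but it does not reduce weak convergence to convergence on a countable family of closed test sets, because the closure of an infinite subfamily of $\ms{S}$ can introduce new generic points that carry limiting mass, and these live outside $\mc{G}$.
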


The proof of this theorem will be by a (fairly technical) Zorn's lemma argument. Before proceeding, we will need some preliminary definitions. We say that two infinite subsets $I$ and $J$ of $\N$ are equivalent, written $I =^*J$, if $I\cap \{n,n+1,\ldots\} = J\cap \{n,n+1,\ldots\}$ for some $n$. Similarly, we will write $J\subseteq^*I$ if $J\cap \{n,n+1,\ldots\}\subseteq I\cap \{n,n+1,\ldots\}$ for some $n$. The relation $=^*$ is an equivalence relation on the set of infinite subsets of $\N$, and $\subseteq^*$ gives a partial order on the equivalence classes. We will denote the equivalence class of an infinite subset $I\subseteq \mathbb{N}$ by $[I]$.

Let $\mu_n$ be a sequence of positive measures lying in the closed unit ball of $\mc{M}(X)$, as in the statement of \hyperref[seq]{Theorem~\ref*{seq}}. Given an equivalence class $[I]$ and any positive measure $\mu\in \mc{M}(X)$, we let $A([I], \mu)$ be the collection of all closed sets $E\subseteq X$ such that $\lim_{i\to \infty, i\in I}\mu_i(F) = \mu(F)$ for all closed sets $F\subseteq E$. \hyperref[seq]{Theorem~\ref*{seq}} will be proved if we can find a pair $([I], \mu)$ such that $X\in A([I], \mu)$. Note that if $E\in A([I],\mu)$, then $F\in A([I], \mu)$ for all closed $F\subseteq E$.

Let $\ms{S}$ be the set of all pairs $([I], \mu)$, where $I$ is an infinite subset of $\N$ and $\mu\in \mc{M}(X)$ is a positive measure with the property  that \[
\mu(X) = \sup_{E\in A([I], \mu)} \mu(E).\]  Intuitively, this condition means that $\mu$ is essentially determined on the sets $E\in A([I], \mu)$. The next lemma makes this precise.

\begin{lem}\label{Slem} Let $([I], \mu)\in \ms{S}$. Then for any Borel set $A\in \ms{B}(X)$, one has \[
\mu(A) = \sup_{E\in A([I],\mu)} \mu(A\cap E).\]
\end{lem}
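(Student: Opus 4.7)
The plan is to observe that the lemma is essentially an immediate consequence of the defining property of $\ms{S}$, namely that $\mu(X)=\sup_{E\in A([I],\mu)}\mu(E)$, combined with the monotonicity of $\mu$ and finite additivity applied to the decomposition $X = E_0 \sqcup (X\smallsetminus E_0)$. No serious machinery should be needed.

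First I would handle the trivial inequality: for any closed $E$ we have $\mu(A\cap E)\leq \mu(A)$, so passing to the supremum over $E\in A([I],\mu)$ gives $\sup_{E\in A([I],\mu)}\mu(A\cap E)\leq\mu(A)$. This direction needs no hypothesis on $([I],\mu)$.

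For the reverse inequality, fix $\varepsilon>0$. By the assumption $([I],\mu)\in\ms{S}$, there exists $E_0\in A([I],\mu)$ with $\mu(E_0)\geq \mu(X)-\varepsilon$, hence $\mu(X\smallsetminus E_0)\leq\varepsilon$. Writing $\mu(A)=\mu(A\cap E_0)+\mu(A\smallsetminus E_0)$ and bounding $\mu(A\smallsetminus E_0)\leq \mu(X\smallsetminus E_0)\leq \varepsilon$, I obtain
\[
\mu(A)\leq \mu(A\cap E_0)+\varepsilon \leq \sup_{E\in A([I],\mu)}\mu(A\cap E)+\varepsilon.
\]
Letting $\varepsilon\to 0$ yields $\mu(A)\leq \sup_{E\in A([I],\mu)}\mu(A\cap E)$, completing the proof.

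The argument has essentially no obstacle; the only subtle point is recognizing that the (seemingly stronger) conclusion about arbitrary Borel sets $A$ reduces to the defining sup condition on $\ms{S}$, which only talks about $\mu(X)$, simply because $\mu$ is finite and positive so that ``most'' of the mass of $\mu$ being concentrated on some $E_0\in A([I],\mu)$ automatically forces the same for its restriction to any Borel set.
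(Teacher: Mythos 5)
Your proof is correct and uses essentially the same idea as the paper's: the defining supremum condition on $\ms{S}$ forces almost all the mass of $\mu$ onto some $E\in A([I],\mu)$, and finite additivity plus monotonicity transfers this to $A\cap E$. The paper phrases it with a sequence $E_n$ rather than an $\varepsilon$-argument, but the two are interchangeable.
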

\begin{proof} Since $([I], \mu)\in \ms{S}$, there is a sequence $E_n\in A([I],\mu)$ such that $\mu(X) = \lim_n \mu(E_n)$. It follows immediately that $\mu(X) = \lim_n \mu(A\cup E_n)$. Therefore \[
\mu(X) = \lim_n \mu(A\cup E_n) = \lim_n \mu(E_n) + \mu(A\smallsetminus E_n) = \mu(X) + \lim_n \mu(A\smallsetminus E_n),\] so $\lim_n \mu(A\smallsetminus E_n) = 0$. In particular, $\mu(A\cap E_n)\to \mu(A)$ as $n\to \infty$, proving the lemma.
\end{proof}

Note that $\ms{S}$ is nonempty, since $([\N], 0)\in \ms{S}$. We put a partial order $<$ on $\ms{S}$ by saying $([I], \mu)< ([J], \nu)$ if the following three statements are true: \begin{enumerate}
\item[1.] $J\subseteq^* I$.
\item[2.] $A([I], \mu)\subseteq A([J], \nu)$.
\item[3.] $\nu - \mu$ is a positive, nonzero measure.
\end{enumerate} The basis for our proof of \hyperref[seq]{Theorem~\ref*{seq}} is the next proposition.

\begin{prop} \label{technical} If $([I], \mu)$ is a maximal element of $\ms{S}$, then $X\in A([I], \mu)$, and thus the sequence $\{\mu_i\}_{i\in I}$ converges weakly to $\mu$.
\end{prop}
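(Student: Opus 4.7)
The plan is to argue by contradiction: assuming $X \notin A([I], \mu)$, I will produce $([J], \nu) \in \ms{S}$ strictly above $([I], \mu)$. By Noetherianity, choose a closed $F \subseteq X$ minimal with $F \notin A([I], \mu)$. Since $A([I], \mu)$ is downward closed, every proper closed $H \subsetneq F$ satisfies $\mu_i(H) \to \mu(H)$ along $I$, whereas $\mu_i(F) \not\to \mu(F)$. A quick inclusion--exclusion argument applied to a putative nontrivial decomposition $F = F_1 \cup F_2$ (all of $F_1, F_2, F_1 \cap F_2$ being proper closed subsets, hence in $A([I], \mu)$) forces $F$ to be irreducible; completeness of $X$ then gives $F \in \ms{F}$, so the Dirac mass $\delta_F$ is defined.

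The central step, and the main obstacle, is showing that $c_F := \mu(F) - \sup\{\mu(H) : H \subsetneq F \text{ closed}\}$ equals zero. Here I would invoke \hyperref[Slem]{Lemma~\ref*{Slem}} with $A = F$ to obtain $E_n \in A([I], \mu)$ with $\mu(F \cap E_n) \to \mu(F)$. Since $F \notin A([I], \mu)$ but $E_n \in A([I], \mu)$ and the latter is downward closed, $F \not\subseteq E_n$, so each $F \cap E_n$ is a proper closed subset of $F$. Taking supremum over such subsets then gives $c_F = 0$. This is the only place where the defining property of $\ms{S}$ is used, and without it the accumulation point extracted below could lie strictly below $\mu(F)$, preventing the construction of a measure $\nu \geq \mu$.

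With $c_F = 0$ in hand, for every proper closed $H \subsetneq F$ the inequality $\mu_i(F) \geq \mu_i(H) \to \mu(H)$ yields $\liminf_{i \in I} \mu_i(F) \geq \sup_{H \subsetneq F} \mu(H) = \mu(F)$. Combined with the nonconvergence $\mu_i(F) \not\to \mu(F)$, this forces an accumulation point $a > \mu(F)$; extract $J \subseteq^* I$ with $\mu_i(F) \to a$ along $J$, and set $\nu := \mu + (a - \mu(F))\delta_F \in \mc{M}(X)_+$.

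It remains to verify that $([I], \mu) < ([J], \nu)$ and that $([J], \nu) \in \ms{S}$. Conditions (1) and (3) of the ordering are immediate. For (2), given $E \in A([I], \mu)$ and closed $G \subseteq E$, one has $F \not\subseteq G$ (else $F \in A([I], \mu)$ by downward closure), so $\delta_F(G) = 0$ by $\sigma$-irreducibility of $F$, whence $\nu(G) = \mu(G)$ and $\mu_i(G) \to \nu(G)$ along $J$, giving $E \in A([J], \nu)$. A direct check on proper subsets of $F$ and on $F$ itself places $F$ in $A([J], \nu)$ as well. Finally, $([J], \nu) \in \ms{S}$ follows by taking $E_n \in A([I], \mu)$ with $\mu(E_n) \to \mu(X)$ and noting that $E_n \cup F \in A([J], \nu)$ with $\nu(E_n \cup F) \geq \mu(E_n) + (a - \mu(F)) \to \nu(X)$. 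This contradicts maximality of $([I], \mu)$ and completes the proof.
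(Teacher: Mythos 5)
Your proof is correct and follows essentially the same route as the paper: take a minimal closed $F\notin A([I],\mu)$, show $F$ is irreducible (hence in $\ms{F}$ by completeness), use \hyperref[Slem]{Lemma~\ref*{Slem}} to conclude $\liminf_{i\in I}\mu_i(F)\geq\mu(F)$, pass to a subsequence realizing $a=\limsup_{i\in I}\mu_i(F)>\mu(F)$, and verify that $([J],\mu+(a-\mu(F))\delta_F)$ lies in $\ms{S}$ and strictly dominates $([I],\mu)$. The only real cosmetic difference is that you package the crucial use of $\ms{S}$-membership as the statement $c_F=0$ rather than the paper's equivalent $\sup_{E\in A([I],\mu)}\mu(F\cap E)=\mu(F)$; you are also a bit more explicit than the paper in observing that completeness is what guarantees $F\in\ms{F}$, so that $\delta_F$ is actually defined.
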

\begin{proof} Suppose for contradiction that $X\notin A([I], \mu)$. Let $F$ be a minimal closed set not lying in $A([I],\mu)$. Then $\{\mu_i(E)\}_{i\in I}$ converges to $\mu(E)$ for all closed sets $E\subsetneq F$ but $\{\mu_i(F)\}_{i\in I}$ does not converge to $\mu(F)$. It follows, in particular, that $F$ must be irreducible, since if $F$ were reducible with decomposition $F = F_1\cup F_2$, then \begin{align*}
\lim_{\substack{i\to \infty\\ i\in I}} \mu_i(F) & = \lim_{\substack{i\to\infty\\ i\in I}} [\mu_i(F_1) + \mu_i(F_2) - \mu_i(F_1\cap F_2)]\\
& = \mu(F_1) + \mu(F_2) - \mu(F_1\cap F_2) = \mu(F),
\end{align*} a contradiction. For any $E\in A([I], \mu)$, one has that \[
\liminf_{\substack{i\to \infty\\ i\in I}} \mu_i(F) \geq \liminf_{\substack{i\to \infty\\ i\in I}} \mu(F\cap E) = \mu(F\cap E),\] and thus by \hyperref[Slem]{Lemma~\ref*{Slem}}, \[
\liminf_{\substack{i\to \infty\\ i\in I}} \mu_i(F)\geq \sup_{E\in A([I],\mu)} \mu(F\cap E) = \mu(F).\] Since $\{\mu_i(F)\}_{i\in I}$ does not converge to $\mu(F)$, it follows that $a := \limsup_{i\to \infty, i\in I} \mu_i(F)>\mu(F)$.

Let $J\subseteq I$ be an infinite subset such that the sequence $\{\mu_j(F)\}_{j\in J}$ converges to $a$, and let $\nu = \mu + (a - \mu(F))\delta_F$. We will prove that $([J],\nu)\in \ms{S}$ and $([I],\mu)<([J], \nu)$, contradicting the fact that $([I],\mu)$ is an upper bound in $\ms{S}$. This will complete the proof. First note that if $E\in A([I],\mu)$, then $F\not\subset E$, and thus \[
\lim_{\substack{j\to \infty\\ j\in J}} \mu_j(E) = \lim_{\substack{i\to\infty\\ i\in I}} \mu_i(E) = \mu(E) = \nu(E).\] In other words, $A([I], \mu)\subseteq A([J],\nu)$. It then only remains to show that $([J],\nu)\in \ms{S}$. By construction $\{\mu_j(E)\}_{j\in J}$ converges to $\nu(E)$ for all closed sets $E\subseteq F$, and hence $F\in A([J],\nu)$. It follows that $F\cup E\in A([J],\nu)$ for any $E\in A([I],\mu)$. Thus \begin{align*}
\sup_{G\in A([J],\nu)} \nu(G)& \geq \sup_{E\in A([I],\mu)} \nu(E\cup F)  = \sup_{E\in A([I],\mu)} \nu(E) + \nu(F\smallsetminus E)\\ &\geq \sup_{E\in A([I],\mu)} \mu(E) + (a - \mu(F))
 = \mu(X) + (a - \mu(F)) = \nu(X).
\end{align*} The reverse inequality $\sup_{G\in A([J],\nu)} \nu(G)\leq \nu(X)$ is trivial, since $\nu$ is positive. We therefore have $([J],\nu)\in \ms{S}$, completing the proof.
\end{proof}

\begin{proof}[Proof of Theorem \ref{seq}] By \hyperref[technical]{Proposition~\ref*{technical}}, we must show that $\ms{S}$ has a maximal element, which we will do using Zorn's lemma. Fix some totally ordered subset $C\subseteq\ms{S}$. Let $\alpha$ be the ordinal number of the same order type as $C$, so that $C = \{([I_\beta], \nu_\beta)\}_{\beta\in \alpha}$. If $\alpha$ has a maximal element $\beta\in \alpha$, then $([I_\beta], \nu_\beta)$ is an upper bound of $C$, so we may without loss of generality assume that $\alpha$ has no maximal element, i.e., that $\alpha$ is a limit ordinal. Note that $\alpha$ must be countable. Indeed, the map $\beta\in \alpha\mapsto \nu_\beta(X)$ is an order-embedding of $\alpha$ into $\R$. Since no uncountable ordinal order-embeds into $\R$, it follows that $\alpha$ is countable. We may therefore choose an increasing sequence $\beta_k\in \alpha$ such that $\alpha = \sup_k \beta_k$. Let $\nu\in \mc{M}(X)$ be the measure defined by $\nu(A) = \lim_{k\to \infty} \nu_{\beta_k}(A)$ for all Borel sets $A\in \ms{B}(X)$, and let $J\subseteq \N$ be an infinite subset such that $J\subseteq^*I_{\beta_k}$ for each $k$ (such a $J$ can be constructed by the standard diagonalization procedure). We will show that $([J],\nu)$ lies in $\ms{S}$ and is an upper bound of $C$. Suppose that $E\in A([I_{\beta_K}], \nu_{\beta_K})$ for some $K$. Recall that that this implies $E\in A([I_{\beta_k}], \nu_{\beta_k})$ for all $k\geq K$. Thus for $k\geq K$  \[
\lim_{\substack{j\to\infty\\ j\in J}} \mu_j(E) = \lim_{\substack{i\to\infty\\ i\in I_{\beta_k}}} \mu_i(E) = \nu_{\beta_k}(E).\] In particular, $\nu_{\beta_k}(E)$ is constant for all $k\geq K$, so that $\nu_{\beta_k}(E) = \nu(E)$ for $k\geq K$. It follows that $E\in A([J], \nu)$, and hence that $\bigcup_kA([I_{\beta_k}], \nu_{\beta_k})\subseteq A([J],\nu)$. Using this, we can compute \[
\nu(X) = \sup_k \nu_{\beta_k}(X) = \sup_k\sup_{E\in A([I_{\beta_k}], \nu_{\beta_k})} \nu_{\beta_k}(E) = \sup_k\sup_{E\in A([I_{\beta_k}], \nu_{\beta_k})} \nu(E) \leq \sup_{E\in A([J],\nu)} \nu(E).\] Of course, the opposite inequality $\sup_{E\in A([J], \nu)} \nu(E)\leq \nu(X)$ is trivial since $\nu$ is a positive measure. Thus we conclude that $\nu(X) = \sup_{E\in A([J], \nu)} \nu(E)$, proving that $([J], \nu)\in \ms{S}$. In order to prove $([J], \nu)$ is an upper bound of $C$, is only remains to show is that $\nu - \nu_{\beta_k}$ is a nonzero positive measure for each $k$. By construction, $\nu - \nu_{\beta_k}$ is a positive measure for each $k$, and since $\nu_{\beta_{k+1}} - \nu_{\beta_k}$ is positive and nonzero, it follows that $\nu - \nu_{\beta_k} = (\nu - \nu_{\beta_{k+1}}) + (\nu_{\beta_{k+1}} - \nu_{\beta_k})$ is positive and nonzero. Thus $([J], \nu)$ is an upper bound of $C$, and the hypotheses of Zorn's lemma are satisfied.
\end{proof}

\section{Completions of Noetherian spaces}

In \S3, we saw that there is a duality $\mc{M}(X)\cong SC(X)^*$ whenever $X$ is a complete space, but that such a duality fails to exist when $X$ is not complete. In general, the measure theory of non-complete  spaces lacks the nice properties of the measure theory of complete Noetherian spaces, such as, for instance, the compactness properties studied in \S4. In this section, we will outline a general method for passing from an arbitrary Noetherian space to a complete (in fact Zariski) space without losing any generality. This process of \emph{completion} generalizes how one obtains a scheme from a variety (on the level of topological spaces).

\begin{Def} Let $X$ be a Noetherian topological space. The \emph{completion} of $X$ is the space $\hat{X}$ defined as follows. As a set, $\hat{X}:= \ms{E}(X)$, that is, the points of $\hat{X}$ are all nonempty irreducible closed subsets of $X$. The topology on $\hat{X}$ is given by declaring the closed sets of $\hat{X}$ to be those sets of the form $V_E:=\{F\in \hat{X} : F\subseteq E\}$, where $E\subseteq X$ is closed.
\end{Def}

It is not difficult to see that the irreducible closed subsets of $\hat{X}$ are those of the form $V_E$, where $E\subseteq X$ is an irreducible closed set. Moreover, the point $E\in \hat{X}$ is the unique generic point of $V_E$. In particular, $\hat{X}$ is a Zariski space, and hence is complete. The bijection $E\mapsto V_E$ between irreducible closed subsets of $X$ and $\hat{X}$ immediately gives the following proposition.

\begin{prop}\label{mono} There is a canonical monomorphism $j\colon \mc{M}(X)\to \mc{M}(\hat{X})$ which takes a measure $\mu = \sum_{E\in \ms{F}(X)} c_E\delta_E$ to $j(\mu) = \sum_{E\in \ms{F}(X)} c_E\delta_E$, where $\delta_E\in \mc{M}(\hat{X})$ denotes the Dirac probability measure at the point $E\in \hat{X}$. Observe that this map $j$ is surjective if and only if $X$ is complete.
\end{prop}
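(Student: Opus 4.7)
My plan is to build $j$ directly from the two classification results, Theorem~\ref{classification} (for $X$) and Theorem~\ref{thmA} (for $\hat X$, which, as noted just before the proposition, is a Zariski space and hence complete). The key observation is that $\ms{F}(X)$ sits literally as a subset of $\ms{E}(X) = \hat X$, so a formal series indexed by $\ms{F}(X)$ can be transported verbatim from $\mc{M}(X)$ to $\mc{M}(\hat X)$.

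First, to define $j$, I would take $\mu \in \mc{M}(X)$, use Theorem~\ref{classification} to write it uniquely as $\mu = \sum_{E \in \ms{F}(X)} c_E \delta_E$ with $\sum |c_E| < \infty$, and declare
\[
j(\mu) := \sum_{E \in \ms{F}(X)} c_E \delta_E \in \mc{M}(\hat X),
\]
where the Dirac masses on the right are the ordinary point-masses at the points $E$ of $\hat X$. The sum $\sum |c_E|$ is unchanged, so this is a bona fide element of $\mc{M}(\hat X)$. Linearity of $j$ is automatic from the uniqueness part of Theorem~\ref{classification}. For injectivity, if $j(\mu) = 0$, then the uniqueness part of Theorem~\ref{thmA} applied to the Zariski space $\hat X$ forces every $c_E$ to vanish, whence $\mu = 0$.

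For the final equivalence, if $X$ is complete then $\ms{F}(X) = \ms{E}(X) = \hat X$, and any $\nu = \sum_{F \in \hat X} c_F \delta_F \in \mc{M}(\hat X)$ is the image under $j$ of $\mu = \sum_{E \in \ms{F}(X)} c_E \delta_E \in \mc{M}(X)$ (well-defined by Theorem~\ref{classification}, the coefficients being absolutely summable). Conversely, if $X$ is not complete, choose $E_0 \in \ms{E}(X) \smallsetminus \ms{F}(X)$; then the point-mass $\delta_{E_0} \in \mc{M}(\hat X)$ cannot lie in the image of $j$, since the unique expansion (via Theorem~\ref{thmA}) of any $j(\mu)$ in $\mc{M}(\hat X)$ is supported on $\ms{F}(X)$, whereas the expansion of $\delta_{E_0}$ is supported on $\{E_0\} \not\subseteq \ms{F}(X)$.

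I expect the main (and really only) obstacle to be notational care: keeping straight the two distinct uses of the symbol $\delta_E$, namely the Dirac mass of Section~2 attached to a $\sigma$-irreducible closed set of $X$ versus the ordinary point-mass at the point $E$ of $\hat X$. Once that bookkeeping is sorted, nothing deeper than the two classification theorems is required.
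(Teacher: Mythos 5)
Your proposal is correct and matches the paper's intended argument — the paper itself offers only a one-sentence remark that the bijection $E \mapsto V_E$ between irreducible closed subsets of $X$ and $\hat{X}$ "immediately gives" the proposition, and your write-up is just that remark unpacked via Theorem~\ref{classification} on $X$ and Theorem~\ref{thmA} on $\hat{X}$. The one point worth flagging for precision: well-definedness of $\mu = \sum_{E\in\ms{F}(X)} c_E\delta_E \in \mc{M}(X)$ in the surjectivity direction doesn't really come from Theorem~\ref{classification} (which is a uniqueness-and-existence statement about a given $\mu$); it follows more directly from absolute convergence in the total-variation norm, each $\delta_E$ being a probability measure.
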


\begin{Def} Let $X$ be a Noetherian space, and suppose $f\colon X\to \R$ is a bounded upper semicontinuous function. Let $E\subseteq X$ be a nonempty irreducible closed set. Then the \emph{generic value} of $f$ on $E$ is defined to be $f(E) := \inf_E f$. More generally, if $f\in SC(X)$ is decomposed as $f = g - h$, where $g$ and $h$ are bounded upper semicontinuous functions, then the generic value of $f$ on $E$ is defined to be $f(E) := g(E) - h(E)$. This is independent of the choice of $g$ and $h$.
\end{Def}

\begin{prop}\label{iso} There is a canonical isometric isomorphism $\eta\colon SC(X)\to SC(\hat{X})$ which takes $f\in SC(X)$ to the function $\eta(f)$ defined by $\eta(f)(E) := f(E)$.
\end{prop}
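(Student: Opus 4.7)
The plan is to prove linearity, isometry, and bijectivity of $\eta$ in turn, using the following easy Noetherian lemma: \emph{for a bounded upper semicontinuous $g\colon X\to \R$, an irreducible closed $E\subseteq X$, and any $\eps>0$, the closed set $F_\eps:=\{x\in E : g(x)\geq g(E)+\eps\}$ is a proper subset of $E$}, so that $g\leq g(E)+\eps$ on the nonempty complement $E\setminus F_\eps$ (otherwise $g\geq g(E)+\eps$ throughout $E$, contradicting $g(E)=\inf_E g$). Applying this lemma to $g_1,g_2$ bounded usc and intersecting yields $E\setminus (F_1^\eps\cup F_2^\eps)\neq\emptyset$ by irreducibility, hence $(g_1+g_2)(E)\leq g_1(E)+g_2(E)+2\eps$ for every $\eps>0$, so $(g_1+g_2)(E)=g_1(E)+g_2(E)$. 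This additivity immediately yields independence of $f(E)$ from the decomposition $f=g-h$ and linearity of $\eta$, while $\eta(g)\in SC(\hat X)$ for bounded usc $g$ follows from the identity $\{E\in\hat X : \eta(g)(E)\geq r\}=V_{\{g\geq r\}}$.

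For isometry, the bound $\|\eta(f)\|\leq\|f\|$ is direct: if $f=g-h$, then $g(x)\leq h(x)+\|f\|$ gives $g(E)\leq h(E)+\|f\|$ on taking infima over $x\in E$, and symmetrically, so $|\eta(f)(E)|\leq\|f\|$. Conversely, for each $x\in X$ the closure $E_x:=\ol{\{x\}}$ is irreducible closed, hence a point of $\hat X$. The characterization $y\in E_x$ iff every open neighborhood of $y$ contains $x$, combined with upper semicontinuity of $g$, forces $g(x)\leq g(y)$ for all $y\in E_x$, so $g(x)=g(E_x)$; hence $f(x)=\eta(f)(E_x)$ and $\|f\|\leq\|\eta(f)\|$.

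The crux is surjectivity. Given $\tilde f=\tilde g-\tilde h\in SC(\hat X)$, I would define $g(x):=\tilde g(E_x)$ and $h(x):=\tilde h(E_x)$ for $x\in X$. Writing $\{\tilde g\geq r\}=V_{F_r}$ for some closed $F_r\subseteq X$, the identity $\{x:g(x)\geq r\}=\{x:E_x\subseteq F_r\}=F_r$ shows $g,h$ are bounded usc on $X$, so $f:=g-h\in SC(X)$. To check $\eta(f)=\tilde f$ it suffices to verify $g(E)=\tilde g(E)$ for each $E\in\hat X$. For any $y\in E$ we have $E_y\subseteq E$, so $E_y$ lies in $V_E=\ol{\{E\}}_{\hat X}$, and upper semicontinuity of $\tilde g$ on $\hat X$ gives $\tilde g(E)\leq\tilde g(E_y)=g(y)$; taking infima, $\tilde g(E)\leq g(E)$. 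For the reverse, I apply the same lemma to $\tilde g$ on the irreducible closed set $V_E\subseteq\hat X$ (noting $\tilde g(V_E)=\tilde g(E)$, since $E\in V_E$ and $E$ specializes to every other point of $V_E$): for each $\eps>0$ there is a proper closed $V_{F_\eps}\subsetneq V_E$, necessarily with $F_\eps\subsetneq E$, on whose complement $\tilde g\leq\tilde g(E)+\eps$, and then any $y\in E\setminus F_\eps$ yields $g(y)=\tilde g(E_y)\leq\tilde g(E)+\eps$, giving $g(E)\leq\tilde g(E)$. The main obstacle here is exactly this last step, which hinges on the correspondence $V_F\subsetneq V_E\Leftrightarrow F\subsetneq E$ between closed subsets, allowing the Noetherian lemma inside $\hat X$ to furnish genuine points of $X$.
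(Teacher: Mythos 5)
Your proof is correct, but it takes a genuinely different route from the paper's. The paper disposes of surjectivity by observing $\eta(\chi_E)=\chi_{V_E}$, so $\eta$ carries $\mathrm{span}\{\chi_E : E\in\ms{E}(X)\}$ onto $\mathrm{span}\{\chi_{V_E}\}$, which is dense in $SC(\hat X)$ by \hyperref[lemagree2]{Lemma~\ref*{lemagree2}}; it then ``extends by continuity.'' That step tacitly uses that $SC(X)$ is complete for the sup norm (otherwise an isometry with dense image need not be onto), a point the paper leaves unexamined. You instead build the inverse explicitly: given $\tilde f=\tilde g-\tilde h$, you pull back along $x\mapsto E_x=\ol{\{x\}}$, check that $g(x):=\tilde g(E_x)$ is bounded usc via $\{g\ge r\}=F_r$, and verify $g(E)=\tilde g(E)$ by a two-sided infimum argument that exploits the correspondence $V_F\subsetneq V_E\Leftrightarrow F\subsetneq E$ together with the fact (your ``Noetherian lemma'') that a bounded usc function exceeds its generic value by $\eps$ only on a proper closed subset of an irreducible set. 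This sidesteps completeness entirely and is thus more self-contained, at the cost of being longer. You also supply the additivity $(g_1+g_2)(E)=g_1(E)+g_2(E)$ to justify well-definedness and linearity of the generic value, which the paper merely asserts in the preceding definition. Both approaches rely on the same basic mechanics---the identity $\{\eta(g)\ge r\}=V_{\{g\ge r\}}$ for usc $g$, and the generic-point correspondence between $X$ and $\hat X$---but your construction of the inverse is the substantive departure, and in my view it is a cleaner way to close the surjectivity step.
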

\begin{proof} We first note that $\eta$ does in fact map $SC(X)$ to $SC(\hat{X})$. To see this, it suffices to show that $\eta$ maps bounded upper semicontinuous functions to bounded upper semicontinuous functions. Suppose $f$ is a bounded upper semicontinuous function. Then \[
\{E\in \hat{X} : \eta(f)(E)\geq r\} = \{E\in \hat{X} : {\inf}_E f\geq r\} = \{E = \hat{X} : E\subseteq \{f\geq r\}\} = V_{\{f\geq r\}}\] is closed, proving that $\eta(f)$ is upper semicontinuous. It is clear that $\|\eta(f)\| = \|f\|$, so $\eta$ is an isometric embedding of $SC(X)$ into $SC(\hat{X})$. If $E$ is an irreducible closed subset of $X$, it is easy to see that $\eta(\chi_E) = \chi_{V_E}$, and thus $\eta$ sends $\mathrm{span}\{\chi_E : E\in \ms{E}(X)\}$ isomorphically onto $\mathrm{span}(\{\chi_{V_E} : E\in \ms{E}(X)\})$. These are dense in $SC(X)$ and $SC(\hat{X})$, respectively, by \hyperref[lemagree2]{Lemma~\ref*{lemagree2}}, so, extending by continuity, we obtain that $\eta$ is an isometric isomorphism $SC(X)\to SC(\hat{X})$.
\end{proof}

Propositions \ref{mono} and \ref{iso} combine to yield the following corollary, which gives the full description of the relationship between the measure theory of $X$ and that of $\hat{X}$.

\begin{cor} Let $X$ be a Noetherian space. Then the following diagram commutes. \smallskip

\[
\begin{psmatrix}[colsep= 1in, rowsep=.5 in]
\mc{M}(X) & \mc{M}(\hat{X})\\
SC(X)^* & SC(\hat{X})^*
\psset{arrows=->, nodesep=3pt}
\ncline{1,2}{2,2}
\trput{\Lambda}
\tlput{\cong}
\ncline{2,1}{2,2} 
\taput{(\eta^{-1})^*}
\tbput{\cong}
\psset{arrows=H->, hookwidth=-1.5mm, hooklength=2mm}
\ncline{1,1}{1,2}
\taput{j}
\ncline{1,1}{2,1}
\tlput{\Lambda}
\end{psmatrix}\] \smallskip

\noindent In particular, $\Lambda\colon \mc{M}(X)\to SC(X)^*$ is an isomorphism if and only if $X$ is complete.
\end{cor}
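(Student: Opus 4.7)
The plan is to verify commutativity of the square first, and then read off the ``if and only if'' statement from the observation that two of the three maps in the resulting factorization of $\Lambda_X$ are always isomorphisms.

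First I would check commutativity. Both composites $\Lambda_{\hat X}\circ j$ and $(\eta^{-1})^*\circ \Lambda_X$ are bounded linear maps $\mc{M}(X)\to SC(\hat X)^*$, so it suffices to verify agreement on a family of test functions whose linear span is dense in $SC(\hat X)$. By \hyperref[lemagree2]{Lemma~\ref*{lemagree2}} applied to $\hat X$, the characteristic functions $\chi_{V_E}$ with $E\in \ms{E}(X)$ form such a family. Since $\eta(\chi_E) = \chi_{V_E}$ (as observed in the proof of \hyperref[iso]{Proposition~\ref*{iso}}), evaluating both sides at $\chi_{V_E}$ gives
\[
((\eta^{-1})^*\circ \Lambda_X)(\mu)(\chi_{V_E}) = \Lambda_X(\mu)(\chi_E) = \mu(E)
\]
and
\[
(\Lambda_{\hat X}\circ j)(\mu)(\chi_{V_E}) = j(\mu)(V_E),
\]
so commutativity reduces to the identity $j(\mu)(V_E) = \mu(E)$ for every $\mu\in \mc{M}(X)$ and every $E\in \ms{E}(X)$.

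This identity is the substantive content. I would write $\mu = \sum_{F\in \ms{F}(X)} c_F \delta_F$ using \hyperref[classification]{Theorem~\ref*{classification}}. On the $\hat X$ side, \hyperref[mono]{Proposition~\ref*{mono}} gives $j(\mu) = \sum_F c_F \delta_F$, where now each $\delta_F$ is the point mass at $F\in \hat X$, so $j(\mu)(V_E) = \sum_{F\subseteq E} c_F$. On the $X$ side, I would check that the Dirac mass $\delta_F$ from \S2 satisfies $\delta_F(E) = 1$ if and only if $F\subseteq E$: if $F\subseteq E$ then $E\cap F = F$ gives type~1 intersection automatically, while if $F\not\subseteq E$ then $E\cap F$ is a proper closed subset of $F$, witnessing type~2 intersection. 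Hence $\mu(E) = \sum_{F\subseteq E} c_F$, matching the other side.

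With commutativity in hand, the remaining claim follows formally. The map $(\eta^{-1})^*$ is an isomorphism because $\eta$ is an isometric isomorphism by \hyperref[iso]{Proposition~\ref*{iso}}, and $\Lambda_{\hat X}$ is an isomorphism because $\hat X$ is a Zariski space, hence complete, so the surjectivity theorem of \S3 (together with \hyperref[riesz]{Proposition~\ref*{riesz}} and the injectivity of $\Lambda$) applies. Chasing the square, $\Lambda_X$ is therefore an isomorphism if and only if $j$ is, which by \hyperref[mono]{Proposition~\ref*{mono}} is equivalent to $X$ being complete. The only mildly technical step in the whole argument is the type~1/type~2 characterization of $\delta_F(E)$ used to evaluate $\mu(E)$; everything else is formal diagram chasing.
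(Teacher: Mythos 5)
Your proof is correct and takes essentially the same route as the paper: both reduce commutativity to checking the identity on Dirac masses and the characteristic functions $\chi_{V_F}$, both invoke \hyperref[lemagree2]{Lemma~\ref*{lemagree2}} for density, and both rest on the type~1/type~2 computation $\delta_F(E)=1\iff F\subseteq E$. The only difference is that you spell out the final ``if and only if'' diagram chase, which the paper leaves implicit.
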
 
\begin{proof} It suffices to check that $(\Lambda\circ j)(\delta_E) = ([\eta^{-1}]^*\circ\Lambda)(\delta_E)$ for all $E\in \ms{F}(X)$. Let $F\in \ms{E}(X)$ be an irreducible closed subset of $X$. Then \[
(\Lambda\circ j)(\delta_E)(\chi_{V_F}) = \Lambda(\delta_{E})(\chi_{V_F}) = \begin{cases} 1 & E\in  V_F\\ 0 & E\notin V_F\end{cases} = \begin{cases} 1 & E\subseteq F\\ 0  & E\not\subset F\end{cases} = \Lambda(\delta_E)(\chi_F).\] Since $\eta(\chi_F) = \chi_{V_F}$, this gives $(\Lambda\circ j)(\delta_E)(\chi_{V_F}) = [(\eta^{-1})^*\circ \Lambda](\delta_E)(\chi_{V_F})$. The corollary then follows by \hyperref[lemagree2]{Lemma~\ref*{lemagree2}}.
\end{proof}

\section{Invariant and ergodic measures}

In this section we begin our study of dynamics on Noetherian spaces. The dynamical systems we consider are continuous maps $f\colon X\to X$, where $X$ is a fixed nonempty Noetherian space. The goal of this section is to give a classification of all $f$-invariant and $f$-ergodic measures on $X$, proving \hyperref[thmD]{Theorem~\ref*{thmD}}. We begin by studying the push-forward operation $f_*$ on $\mc{M}(X)$.

\begin{prop}\label{ctn} The push-forward operator  $f_*\colon \mc{M}(X)\to \mc{M}(X)$ is continuous in both the weak and strong topologies.
\end{prop}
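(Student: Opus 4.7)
The plan is to realize $f_*$ as the Banach-space adjoint of a natural pullback operator $f^*\colon SC(X)\to SC(X)$, and then invoke standard duality to obtain continuity in both topologies simultaneously.

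First I would define $f^*\colon SC(X)\to SC(X)$ by $f^*g := g\circ f$. The continuity of $f$ is exactly what is needed to ensure this is well-defined: if $g\colon X\to \R$ is bounded upper semicontinuous, then for any $r\in \R$ the set $\{f^*g\geq r\} = f^{-1}(\{g\geq r\})$ is closed, so $f^*g$ is again bounded upper semicontinuous. Since $SC(X)$ is the linear span of bounded upper semicontinuous functions and $f^*$ is linear, it follows that $f^*$ sends $SC(X)$ into itself. Moreover $\|f^*g\| = \sup_X|g\circ f|\leq \sup_X|g| = \|g\|$, so $f^*$ is a bounded linear operator of norm at most $1$.

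Next I would establish the change-of-variables identity $\int_X g\, d(f_*\mu) = \int_X (g\circ f)\, d\mu$ for every $g\in SC(X)$ and $\mu\in \mc{M}(X)$. This is routine: by the very definition of push-forward it holds for characteristic functions of Borel sets, extends to simple Borel functions by linearity, and then to all bounded Borel functions, in particular to all of $SC(X)$, by dominated convergence. Under the isomorphism $\Lambda\colon \mc{M}(X)\cong SC(X)^*$ of \hyperref[thmB]{Theorem~\ref*{thmB}}, this identity reads $\Lambda(f_*\mu)(g) = \Lambda(\mu)(f^*g)$, which says precisely that $f_*$ is conjugate, via $\Lambda$, to the Banach-space adjoint $(f^*)^*\colon SC(X)^*\to SC(X)^*$.

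Finally I would invoke general Banach-space theory: the adjoint of a bounded linear operator is itself bounded with the same operator norm, and it is automatically continuous from the weak-$*$ topology on the dual to itself. This yields continuity of $f_*$ in both the strong and weak topologies in one stroke. (For a Noetherian $X$ which is not necessarily complete, one can bypass the duality and verify weak continuity directly from the subbase characterization, using that $f^{-1}(E)$ is closed whenever $E$ is and that $(f_*\mu)(E)=\mu(f^{-1}(E))$.) I do not anticipate any serious obstacle here; the only point demanding thought is the verification that $f^*$ preserves $SC(X)$, and that reduces in one line to the continuity of $f$ and the definition of upper semicontinuity.
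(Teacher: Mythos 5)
Your proof is correct and takes essentially the same route as the paper: realize $f_*$ as the Banach-space adjoint of the pullback operator $f^*\tau = \tau\circ f$ on $SC(X)$, and then deduce continuity in both the strong and weak topologies from the adjoint relation. The only cosmetic difference is in how the identity $\Lambda(f_*\mu)(\tau) = \Lambda(\mu)(f^*\tau)$ is verified: the paper reduces to $\tau = \chi_E$ for $E$ closed and invokes the density statement of Lemma~\ref{lemagree2}, whereas you establish it for all of $SC(X)$ directly via the standard change-of-variables argument for push-forward; both are valid.
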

\begin{proof} Let $f^*\colon SC(X)\to SC(X)$ be the bounded linear operator given by $f^*\tau = \tau\circ f$. We begin by showing that $f_*$ is adjoint to $f^*$, or in other words, that $\Lambda(f_*\mu)(\tau) = \Lambda(\mu)(f^*\tau)$ for all $\mu\in \mc{M}(X)$ and $\tau\in SC(X)$. By \hyperref[lemagree2]{Lemma~\ref*{lemagree2}}, it suffices to prove this for $\tau = \chi_E$, where $E$ is a closed set. This is done easily: \[
\Lambda(f_*\mu)(\chi_E) = (f_*\mu)(E) = \mu(f^{-1}(E)) = \Lambda(\mu)(\chi_{f^{-1}(E)}) = \Lambda(\mu)(f^*\chi_E).\] We conclude that $\|\Lambda(f_*\mu)\|\leq \|\Lambda(\mu)\|\|f^*\|$, so $f_*$ is continuous in the strong topology. Now suppose $\tau\in SC(X)$ and $(\mu_\alpha)_{\alpha\in A}$ is a net in $\mc{M}(X)$ converging weakly to a measure $\mu$. Then  \[
\lim_\alpha \Lambda(f_*\mu_\alpha)(\tau) = \lim_\alpha \Lambda(\mu_\alpha)(f^*\tau) =  \Lambda(\mu)(f^*\tau) = \Lambda(f_*\mu)(\tau),\]  proving that $f_*$ is continuous in the weak topology.
\end{proof}

\begin{lem} Let $E\subseteq X$ be an irreducible (resp. $\sigma$-irreducible) closed set. Then $\ol{f(E)}$ is irreducible (resp. $\sigma$-irreducible). In particular, there is an induced map $\hat{f}\colon \hat{X}\to \hat{X}$ given by $\hat{f}(E) = \ol{f(E)}$. Moreover, the map $\hat{f}$ is continuous.
\end{lem}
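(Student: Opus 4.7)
The plan is to handle both irreducibility and $\sigma$-irreducibility with the same pullback argument, and then to verify continuity of $\hat{f}$ by computing the preimage of a basic closed set in $\hat{X}$.

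First I would prove that $\overline{f(E)}$ is irreducible when $E$ is. Suppose for contradiction that $\overline{f(E)} = F_1 \cup F_2$ with each $F_i \subsetneq \overline{f(E)}$ closed in $X$. Since $f$ is continuous, $E \cap f^{-1}(F_1)$ and $E \cap f^{-1}(F_2)$ are closed subsets of $E$ whose union is $E$. If $E = E \cap f^{-1}(F_i)$ for some $i$, then $f(E) \subseteq F_i$, forcing $\overline{f(E)} \subseteq F_i$, contradicting properness of $F_i$. Hence both subsets are proper, contradicting the irreducibility of $E$. For the $\sigma$-irreducible case, the same argument works verbatim with a countable decomposition $\overline{f(E)} = \bigcup_n F_n$ pulled back to $E = \bigcup_n (E \cap f^{-1}(F_n))$; if any $E \cap f^{-1}(F_n)$ equals $E$ then $\overline{f(E)} \subseteq F_n$, a contradiction, so the pullback is a covering by proper closed subsets, contradicting $\sigma$-irreducibility of $E$.

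Since $f(E)$ is nonempty whenever $E$ is, and the first part shows that the closure of $f(E)$ remains in $\ms{E}(X) = \ms{F}(X)$, the assignment $E \mapsto \overline{f(E)}$ gives a well-defined map $\hat{f}\colon \hat{X} \to \hat{X}$.

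For continuity, recall that the closed sets of $\hat{X}$ are precisely the sets $V_C = \{E \in \hat{X} : E \subseteq C\}$ for $C \subseteq X$ closed, so it suffices to show that $\hat{f}^{-1}(V_C)$ is closed for every closed $C \subseteq X$. Unpacking the definitions,
\[
\hat{f}^{-1}(V_C) = \{E \in \hat{X} : \overline{f(E)} \subseteq C\} = \{E \in \hat{X} : f(E) \subseteq C\} = \{E \in \hat{X} : E \subseteq f^{-1}(C)\} = V_{f^{-1}(C)},
\]
where the second equality uses that $C$ is closed and the third uses continuity of $f$. Since $f^{-1}(C)$ is closed in $X$, the set $V_{f^{-1}(C)}$ is closed in $\hat{X}$, and hence $\hat{f}$ is continuous.

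The only mildly delicate point is the pullback step, where one must be careful that the pulled-back decomposition of $E$ really consists of proper subsets; but this is immediate because if a piece exhausted $E$ then its image in $\overline{f(E)}$ would already contradict properness of the original decomposition. The rest is straightforward bookkeeping with the definition of the topology on $\hat{X}$.
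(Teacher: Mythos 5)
Your proof is correct and takes essentially the same approach as the paper: pull back a decomposition of $\overline{f(E)}$ along $f$ to a decomposition of $E$ by proper closed subsets, and verify continuity via $\hat{f}^{-1}(V_C) = V_{f^{-1}(C)}$. One small slip worth noting: the parenthetical assertion $\ms{E}(X) = \ms{F}(X)$ in your well-definedness paragraph is false for a general Noetherian $X$ (it is exactly the definition of $X$ being \emph{complete}); fortunately it is also unnecessary, since $\hat{X}$ is by definition $\ms{E}(X)$ and you have already shown $\overline{f(E)}$ is irreducible.
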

\begin{proof} Assume that $E$ is irreducible. If $\ol{f(E)} = F_1\cup F_2$ for some closed sets $F_1$ and $F_2$, then $E = [E\cap f^{-1}(F_1)]\cup [E\cap f^{-1}(F_2)]$. Since $E$ is irreducible, this implies $E\cap f^{-1}(F_i) = E$ for some $i$, and hence  $f(E)\subseteq F_i$. Therefore $\ol{f(E)}\subseteq F_i$, from which it follows that $\ol{f(E)}$ is irreducible. A similar proof shows $\ol{f(E)}$ is $\sigma$-irreducible whenever $E$ is $\sigma$-irreducible. To see that $\hat{f}$ is continuous, it suffices to note that $\hat{f}^{-1}(V_F) = V_{f^{-1}(F)}$ for closed sets $F\subseteq X$.
\end{proof}

\begin{prop}\label{push} Let $\mu = \sum_{E\in \ms{F}} c_E\delta_E\in \mc{M}(X)$. Then $f_*\mu = \sum_{E\in \ms{F}} c_E\delta_{\ol{f(E)}}$. In particular, the following diagram commutes.

\[\begin{psmatrix}[colsep= 1in, rowsep=.5 in]
\mc{M}(X) & \mc{M}(X)\\
\mc{M}(\hat{X}) & \mc{M}(\hat{X})
\psset{arrows=->, nodesep=3 pt}
\ncline{1,1}{1,2}
\taput{f_*}
\ncline{2,1}{2,2}
\taput{\hat{f}_*}
\psset{arrows=H->, hookwidth=-1.5mm, hooklength=2mm}
\ncline{1,1}{2,1}
\tlput{j}
\ncline{1,2}{2,2}
\trput{j}
\end{psmatrix}\]
\end{prop}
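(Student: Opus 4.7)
The plan is to reduce the first claim to the single-Dirac identity $f_*\delta_E = \delta_{\ol{f(E)}}$ for each $E\in\ms{F}(X)$. Since $\mu = \sum_{E\in\ms{F}} c_E\delta_E$ converges absolutely in the strong topology and $f_*$ is continuous in the strong topology (\hyperref[ctn]{Proposition~\ref*{ctn}}), we can pass $f_*$ through the sum to obtain $f_*\mu = \sum_{E\in\ms{F}} c_E\, f_*\delta_E$. Everything reduces to proving the Dirac identity.

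To prove $f_*\delta_E = \delta_{\ol{f(E)}}$, fix $E\in\ms{F}(X)$ and set $F := \ol{f(E)}$, which lies in $\ms{F}(X)$ by the preceding lemma. Both $f_*\delta_E$ and $\delta_F$ are positive finite measures, so by \hyperref[lemagree]{Lemma~\ref*{lemagree}} it suffices to show they agree on $\ms{F}$; I will in fact check agreement on every closed set $G\subseteq X$. Since $F$ is $\sigma$-irreducible, $\delta_F(G) = 1$ when $F\subseteq G$ (type 1 intersection witnessed by the empty family of proper subsets) and $\delta_F(G) = 0$ when $F\not\subseteq G$ (type 2 witnessed by the single proper closed subset $F\cap G\subsetneq F$). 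On the other hand, $(f_*\delta_E)(G) = \delta_E(f^{-1}(G))$, and since $f^{-1}(G)$ is closed and $E$ is $\sigma$-irreducible, the same dichotomy applies: this equals $1$ exactly when $E\subseteq f^{-1}(G)$, i.e.\ $f(E)\subseteq G$, i.e.\ $F\subseteq G$. The two measures agree on all closed sets, so by \hyperref[lemagree]{Lemma~\ref*{lemagree}} they coincide.

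For the commutative diagram, all four maps are linear and continuous in the strong topology (using \hyperref[ctn]{Proposition~\ref*{ctn}} for $f_*$ and $\hat{f}_*$, and the formula in \hyperref[mono]{Proposition~\ref*{mono}} for $j$). By absolute convergence of the decomposition, it suffices to check $(j\circ f_*)(\delta_E) = (\hat{f}_*\circ j)(\delta_E)$ for each $E\in\ms{F}(X)$. The left-hand side is $j(\delta_{\ol{f(E)}}) = \delta_{\ol{f(E)}}$, the Dirac mass on $\hat{X}$ at the point $\ol{f(E)}\in\hat{X}$. For the right-hand side, $j(\delta_E) = \delta_E$ is the Dirac mass on $\hat{X}$ at the point $E\in\hat{X}$; since $\hat{X}$ is Zariski this equals $\delta_{V_E}$ in the $\sigma$-irreducible-set notation. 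Applying the already-proved Dirac identity to $\hat{f}$ gives $\hat{f}_*\delta_{V_E} = \delta_{\ol{\hat{f}(V_E)}}$ (closure in $\hat{X}$), and a short computation using $\hat{f}(E) = \ol{f(E)}$ together with the irreducibility of $V_E$ shows that this closure is $V_{\ol{f(E)}}$, which as a Dirac mass on $\hat{X}$ is precisely $\delta_{\ol{f(E)}}$.

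I do not anticipate a serious obstacle: the proof is essentially bookkeeping with the definitions of type 1/type 2 intersection. The only place requiring some care is ensuring that the closure of $\hat{f}(V_E)$ in $\hat{X}$ really is $V_{\ol{f(E)}}$ (both inclusions follow quickly: $\subseteq$ from monotonicity $F\subseteq E\Rightarrow \ol{f(F)}\subseteq\ol{f(E)}$, and $\supseteq$ from the fact that the closure already contains the point $\hat{f}(E)=\ol{f(E)}$ and $V_{\ol{f(E)}}$ is the smallest closed set in $\hat{X}$ doing so).
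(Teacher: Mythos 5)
Your proof is correct and follows essentially the same route as the paper: reduce to the Dirac identity $f_*\delta_E=\delta_{\ol{f(E)}}$, verify it on closed sets via the type-$1$/type-$2$ dichotomy and invoke Lemma~\ref{lemagree}, then extend to arbitrary $\mu$ by strong continuity of $f_*$ (Proposition~\ref{ctn}) and absolute convergence. The only difference is that you spell out the diagram check (including $\ol{\hat f(V_E)}=V_{\ol{f(E)}}$), which the paper leaves as immediate from the explicit formula and Proposition~\ref{mono}.
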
\smallskip

\begin{proof} We begin by showing that $f_*\delta_E = \delta_{\ol{f(E)}}$ for $E\in \ms{F}$. Let $F\subseteq X$ be a closed set. Then \[
(f_*\delta_E)(F) = \delta_E(f^{-1}(F)) = \begin{cases} 1 & E\subseteq f^{-1}(F)\\ 0 & \mbox{otherwise}\end{cases} = \begin{cases}1 & f(E)\subseteq F\\ 0 &\mbox{otherwise}\end{cases} = \begin{cases} 1 & \ol{f(E)}\subseteq F.\\ 0 & \mbox{otherwise}.\end{cases}\] Thus $f_*\delta_E$ agrees with $\delta_{\ol{f(E)}}$ on closed sets, so $f_*\delta_E = \delta_{\ol{f(E)}}$ by \hyperref[lemagree]{Lemma~\ref*{lemagree}}. Now assume that $\mu = \sum_{E\in \ms{F}} c_E\delta_E$ is an arbitrary measure on $X$. Let $E_1,E_2,\ldots$ be an enumeration of those $E\in \ms{F}$ for which $c_E\neq 0$. Since $\sum |c_E|<\infty$, the finite sums $\sum_{n=1}^Nc_{E_n}\delta_{E_n}$ converge strongly to $\mu$. By \hyperref[ctn]{Proposition~\ref*{ctn}}, one has \[
f_*\mu = f_*\lim_{N\to \infty} \sum_{n=1}^N c_{E_n}\delta_{E_n} = \lim_{N\to \infty} \sum_{n=1}^N c_{E_n}f_*\delta_{E_n} = \sum_{E\in \ms{F}} c_Ef_*\delta_E = \sum_{E\in \ms{F}} c_E\delta_{\ol{f(E)}}.\] This completes the proof.
\end{proof}

\hyperref[push]{Proposition~\ref*{push}} shows that, from the standpoint of measure theory, there is no loss of generality studying the dynamics of $\hat{f}$ instead of $f$; in fact, one actually gains information, as in general there will be more measures on $\hat{X}$ than on $X$. Thus for the remainder of the section and for much of the next, we will assume that $X$ is a Zariski space.

\begin{prop}\label{inv} Let $X$ be a Zariski space, and let $f\colon X\to X$ be a continuous map. Then a measure $\mu\in \mc{M}(X)$ is $f$-invariant if and only if it is of the form $\mu = \sum_{n=0}^\infty c_n\mu_n$, where the $c_n$ are absolutely summable real numbers and each $\mu_n$ is a probability measure of the form $\mu_n = r_n^{-1}(\delta_{x_1} + \delta_{x_2} + \cdots + \delta_{x_{r_n}})$ for some $f$-periodic cycle $x_1,\ldots, x_{r_n}\in X$.
\end{prop}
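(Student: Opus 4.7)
The easy direction is straightforward: each $\mu_n = r_n^{-1}(\delta_{x_1}+\cdots+\delta_{x_{r_n}})$ along a periodic cycle is manifestly $f$-invariant (the cycle is permuted), and by the strong continuity of $f_*$ established in \hyperref[ctn]{Proposition~\ref*{ctn}}, any absolutely summable combination $\sum c_n\mu_n$ is again $f$-invariant. My plan therefore focuses on the converse: classifying every $f$-invariant $\mu\in\mc{M}(X)$.

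The setup is to translate $f$-invariance into a combinatorial recurrence on coefficients. By \hyperref[thmA]{Theorem~\ref*{thmA}} write $\mu=\sum_{x\in X}c_x\delta_x$, and by \hyperref[push]{Proposition~\ref*{push}} compute $f_*\mu=\sum_y\bigl(\sum_{x\in f^{-1}(y)}c_x\bigr)\delta_y$, using that $\ol{f(\ol{\{x\}})}=\ol{\{f(x)\}}$ in a Zariski space, so $f_*\delta_x=\delta_{f(x)}$. The uniqueness clause of \hyperref[thmA]{Theorem~\ref*{thmA}} then turns $f_*\mu=\mu$ into the pointwise relation $c_y=\sum_{x\in f^{-1}(y)}c_x$ for every $y\in X$. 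My goal is to show that this, together with absolute summability $\sum|c_x|<\infty$, forces $S:=\mathrm{supp}(\mu)$ to consist of $f$-periodic points on which $x\mapsto c_x$ is constant along each cycle; regrouping by cycle will then yield the asserted decomposition.

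I would first handle the positive case $c_x\geq 0$. The recurrence immediately gives $c_{f(x)}\geq c_x$, so $c$ is nondecreasing along forward orbits. Absolute summability rules out an infinite forward orbit inside $S$ (else infinitely many points would have $c\geq c_x>0$), so every $x\in S$ is pre-periodic. On each periodic cycle $P_k$ of length $r_k$, $c$ is both nondecreasing and periodic, hence constant, say with value $a_k$. The equation $c_y=\sum_x c_x$ at $y\in P_k$, together with the unique periodic preimage $z\in P_k$ contributing $c_z=a_k$, forces every other preimage of $y$ to have $c_x=0$; walking back from the first periodic iterate of any $x\in S$ then shows $x$ is itself periodic. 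The decomposition $\mu=\sum_k(a_kr_k)\,r_k^{-1}\mu_k^{\mathrm{cyc}}$ follows, with $\sum_k|a_kr_k|=\mu(X)<\infty$.

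For the signed case my plan is to reduce to the positive one via the ``variation measure'' $\nu:=\sum_x|c_x|\delta_x$. The triangle inequality yields $|c_y|\leq\sum_{x\in f^{-1}(y)}|c_x|$, so $\nu\leq f_*\nu$ coefficientwise; since $\nu(X)=(f_*\nu)(X)$, equality must hold, so $\nu$ is $f$-invariant and the triangle inequality is tight at every $y$. Applying the positive case to $\nu$ gives that $S$ consists of periodic points and $|c|$ is constant on each cycle. The equality case of the triangle inequality then forces all $c_x$ with $x\in f^{-1}(y)\cap S$ to share the sign of $c_y$, which walking around a cycle upgrades ``$|c|$ constant on $P_k$'' to ``$c$ constant on $P_k$''. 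The main obstacle is precisely this signed-to-positive reduction: one might worry that the Hahn decomposition of $\mu$ need not itself be $f$-invariant, but the triangle-inequality/total-mass argument is exactly what guarantees it does in this discrete setting. Everything else is bookkeeping with the recurrence $c_y=\sum_{x\in f^{-1}(y)}c_x$.
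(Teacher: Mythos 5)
Your proposal is correct and takes essentially the same route as the paper: both reduce $f$-invariance to the coefficient recurrence $c_y=\sum_{x\in f^{-1}(y)}c_x$ via the explicit description of measures and pushforwards, use the resulting monotonicity $|c_{f(x)}|\geq|c_x|$ together with summability to confine the support to finitely many periodic cycles with constant coefficients, and handle the signed case by passing to $\sum_x|c_x|\delta_x$ and a total-mass argument. The paper's version is considerably terser (it works with the finite level sets $S_\varepsilon=\{|c_x|>\varepsilon\}$, asserting $f(S_\varepsilon)=S_\varepsilon$ and constancy on cycles without the justifications you supply), but the underlying mechanism is the same.
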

\begin{proof} Clearly any measure of the desired form $\mu = \sum c_n\mu_n$ is invariant, since each of the measures $\mu_n$ is invariant. Conversely, assume $\mu = \sum_{x\in X} a_x\delta_x$ is invariant. For each $\eps>0$, the set $S_\eps = \{x\in X : |a_x|>\eps\}$ is finite. Since $\mu$ is invariant, $f(S_\eps) = S_\eps$. Thus $S_\eps$ consists of finitely many periodic cycles for $f$. Moreover, the coefficients $a_x$ are constant along any such periodic cycle. The proposition follows immediately.
\end{proof}

\begin{proof}[Proof of Theorem \ref{thmD}] We recall that the $f$-ergodic probability measures are precisely those which are extremal in the convex set of all $f$-invariant probability measures (see Theorem 6.10.iii of \cite{MR648108}). Let $\mu$ be an $f$-invariant probability measure, and suppose $\mu = \sum c_n \mu_n$ is a decomposition as in \hyperref[inv]{Proposition~\ref*{inv}}. We may assume without loss of generality that $\mu_n\neq \mu_m$ for $n\neq m$ and that $c_n>0$ for all $n$. If there is more than one term in the sum, then \[
\mu = c_1\mu_1 + (1 - c_1)\sum_{n\geq 2} \frac{c_n}{1-c_1}\mu_n\] is a decomposition of $\mu$ as a convex combination of distinct $f$-invariant probability measures, so $\mu$ is not ergodic. Thus if $\mu$ is ergodic, it must be that $\mu = \mu_1$ is of the desired form. On the other hand, if $\mu = \mu_1$ is of the desired form, then it is clearly ergodic.
\end{proof}

\begin{rem} When $X$ is a Zariski space, then there is always an $f$-ergodic probability measure on $X$. Indeed, one can construct one explicitly as follows. Recursively define a sequence $X_n$ of closed sets by $X_0 = X$ and $X_{n+1} = \ol{f(X_n)}$ for each $n\geq 0$. The $X_n$ are a nested sequence of closed sets. Since $X$ is Noetherian, there is an $N$ such that $X_N = X_{N+1}$, or equivalently $X_N = \ol{f(X_N)}$. It follows that $f$ permutes the generic points of the irreducible components of $X_N$, so one obtains at least one periodic cycle for $f$ in this way.

On the other hand, if $X$ is not a Zariski space, it is possible that $X$ will have no ergodic probability measures. For example, if $X = \Z$ with the cofinite topology and $f\colon X\to X$ is the translation $f(n) = n+1$, then $X$ has no ergodic probability measures.
\end{rem}

\section{Asymptotic behavior of orbits}

Let $X$ be a nonempty Noetherian topological space, and $f\colon X\to X$ a continuous map. In this section we study the asymptotic behavior of both forward and reverse orbits of $f$ using the measure theory of $X$. As noted in the previous section, replacing $f$ with $\hat{f}$ if necessary allows us to assume that $X$ is a Zariski space. Throughout this section we will make this assumption, unless otherwise stated. We begin by studying the forward orbits of $f$.

\begin{Def} Let $x\in X$ be a point. The \emph{$\omega$-limit set} of $x$ is the closed set \[
L(x) := \bigcap_{k\geq 0}\ol{\{f^n(x) : n\geq k\}}\subseteq X.\]
\end{Def}

\begin{lem}\label{asymp} Suppose $L = L(x)$ is the $\omega$-limit set of a point $x\in X$. Then  \begin{enumerate}
\item[1.] The generic points of the irreducible components of $L$ form a periodic cycle for $f$.
\item[2.] There is an $N\geq 1$ such that $f^n(x)\in L$ for all $n\geq N$, and moreover $\{f^n(x): n\geq k\}$ is dense in $L$ for all $k\geq N$.
\end{enumerate} In particular, $L$ is the smallest closed set containing $f^n(x)$ for sufficiently large $n$.
\end{lem}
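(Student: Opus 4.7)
The plan hinges on exploiting Noetherianity to turn the intersection defining $L$ into a stable limit, after which everything follows from the behavior of irreducible decompositions under a continuous map.

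First I would set $L_k := \ol{\{f^n(x) : n\geq k\}}$, so $(L_k)$ is a descending chain of closed sets. Noetherianity gives some $N\geq 1$ with $L_k = L_N$ for all $k\geq N$, and this common value is exactly $L$. This immediately proves part (2) and the ``in particular'' statement: for $k\geq N$ the orbit $\{f^n(x):n\geq k\}$ is dense in $L_k = L$, in particular $f^n(x)\in L$ for $n\geq N$, and by definition any closed set containing $f^n(x)$ for all sufficiently large $n$ must contain $L_N = L$.

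Next I would show $\ol{f(L)} = L$. Since $f$ is continuous, $f(L)=f(L_N)\subseteq \ol{f(\{f^n(x):n\geq N\})} = L_{N+1} = L$, and conversely $L_{N+1}\subseteq \ol{f(L_N)}$ is automatic, so $\ol{f(L)}=L_{N+1}=L$. Now take the irreducible decomposition $L = E_1\cup\cdots\cup E_r$ with generic points $y_1,\ldots,y_r$. By the lemma preceding \hyperref[push]{Proposition~\ref*{push}}, each $\ol{f(E_i)}$ is irreducible, so $L=\ol{f(L)}=\ol{f(E_1)}\cup\cdots\cup\ol{f(E_r)}$ is an irreducible decomposition, and hence each $\ol{f(E_i)}$ equals some $E_{\sigma(i)}$ for a (necessarily bijective, by finiteness) map $\sigma$ on $\{1,\ldots,r\}$. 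Because $X$ is a Zariski space, irreducible closed sets have unique generic points, and the computation $\ol{\{f(y_i)\}}=\ol{f(\ol{\{y_i\}})}=\ol{f(E_i)}=E_{\sigma(i)}$ forces $f(y_i)=y_{\sigma(i)}$.

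The crux of part (1), and what I expect to be the only nontrivial step, is showing $\sigma$ is a single cycle rather than a union of several. Suppose for contradiction the orbits of $\sigma$ on $\{1,\ldots,r\}$ are $O_1,\ldots,O_s$ with $s\geq 2$, and set $F_j := \bigcup_{i\in O_j} E_i$. Each $F_j$ is closed and satisfies $f(F_j)\subseteq\ol{f(F_j)}=F_j$, so $F_j$ is forward-invariant. Now $f^N(x)\in L$, so $f^N(x)\in F_j$ for some $j$, and then $f^n(x)\in F_j$ for all $n\geq N$ by forward-invariance. Taking closures gives $L=L_N\subseteq F_j$, contradicting $s\geq 2$. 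Hence $\sigma$ is a single $r$-cycle and $\{y_1,\ldots,y_r\}$ is a periodic cycle for $f$.
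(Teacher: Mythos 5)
Your proposal is correct and follows essentially the same route as the paper's own proof: Noetherian stabilization of the chain $L_k$ at some index $N$, the identity $\ol{f(L)}=L$, the permutation of the generic points of the components (via irreducibility of $\ol{f(E)}$), and then the forward-invariance observation that $f^N(x)$ landing in one $\sigma$-orbit's union forces $L$ into that union, so there is only one orbit. Your formulation via orbits $O_j$ of the permutation $\sigma$ is a small reorganization of the paper's relabeling argument, not a different method.
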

\begin{proof} By definition, $L$ is the intersection of a nested sequence of closed sets. Since $X$ is Noetherian, this sequence must stabilize. In other words, there is an index $N\geq 1$ such that $L = \ol{\{f^n(x) : n\geq k\}}$ for all $k\geq N$, proving (2). Suppose $L$ has irreducible decomposition $L =L_1\cup\cdots\cup L_r$, and let $y_i$ be the generic point of $L_i$ for each $i$. Observe that \[
\ol{f(L)} = \ol{f(\{f^n(x) : n\geq N\})} = \ol{\{f^{n}(x) : n\geq N+1\}} = L.\] It follows that $f$ permutes the $y_i$; to complete the proof, we must show that in fact $f$ acts transitively on the $y_i$. Up to relabeling, we may assume $f^N(x)\in L_1$, and that $\{y_1, y_2, \ldots, y_s\}$ is the orbit of $y_1$. Then $L = \ol{\{f^n(x) : n\geq N\}}\subseteq L_1\cup\cdots\cup L_s$, so one must have $r = s$. This completes the proof.
\end{proof} 

\begin{proof}[Proof of Theorem \ref{thmE}] Let $\mu_n = n^{-1}\sum_{k=0}^{n-1}f^k_*\delta_x$ for each $n\geq 1$. The space of Borel probability measures on $X$ is weakly sequentially compact by \hyperref[thmC]{Theorem~\ref*{thmC}}. Thus to prove the theorem, it suffices to show that every weakly convergent subsequence of $\mu_n$ converges  to the measure $r^{-1}(\delta_{y_1} + \cdots + \delta_{y_r})$, where the $y_i$ are the generic points of the irreducible components of the $\omega$-limit set $L = L(x)$. Fix a weakly convergent subsequence $\mu_{n_i}$ of $\mu_n$, say with $\mu_{n_i}\to \mu$ as $i\to \infty$. Since $f_*$ is weakly continuous by \hyperref[ctn]{Proposition~\ref*{ctn}},  we see that \[
f_*\mu = \lim_{i\to\infty} \frac{1}{n_i}\sum_{k=0}^{n_i-1} f^{k+1}_*\delta_x = \lim_{i\to \infty} \left(\mu_{n_i} + \frac{f^{n_i}_*\delta_x - \delta_x}{n_i}\right) = \lim_{i\to \infty} \mu_{n_i} = \mu.\] Thus $\mu$ is $f$-invariant, and we conclude by \hyperref[inv]{Proposition~\ref*{inv}} that $\mu = \sum_{j=1}^\infty c_j\nu_j$, where $c_j> 0$ for each $j$ and the $\nu_j$ are $f$-ergodic probability measures. Fix an index $j$, and suppose that $\nu_j = s^{-1}(\delta_{z_1} + \cdots + \delta_{z_s})$. Let $Z = \ol{\{z_1,\ldots, z_s\}}$. Since $z_1,\ldots, z_s$ is a periodic cycle for $f$, one has $f(Z) \subseteq Z$. Note that \[
0<c_j \leq \mu(Z) = \lim_{i\to \infty}\frac{1}{n_i}\sum_{k=0}^{n_i-1} f^k_*\delta_x(Z) = \lim_{i\to\infty} \frac{1}{n_i}\sum_{k=0}^{n_i-1}\chi_Z(f^k(x)).\] It follows that there is at least one $k$ such that $f^k(x)\in Z$, and hence that $f^n(x)\in Z$ for all $n\geq k$. As $L$ is the smallest closed set containing $f^n(x)$ for sufficiently large $n$, we conclude that $L\subseteq Z$. If $L\neq Z$, then $z_1,\ldots, z_s\notin L$, and thus $\mu(L) \leq 1 - c_j<1$. But \[
\mu(L) = \lim_{i\to \infty} \frac{1}{n_i}\sum_{k=0}^{n_i-1}\chi_L(f^k(x)) = 1\] since $f^k(x)\in L$ for all large enough $k$. Therefore $L = Z$, and $\nu_j = r^{-1}(\delta_{y_1} + \cdots + \delta_{y_r})$. As $j$ was an arbitrary index, in fact $\mu = r^{-1}(\delta_{y_1} + \cdots + \delta_{y_r})$. This completes the proof.
\end{proof}

While \hyperref[thmE]{Theorem~\ref*{thmE}} was originally proved by Favre (see \cite{thesis} and  \cite{MR1741274}), he stated the theorem in terms of semicontinuous functions instead of in terms of measures. We give this formulation in the following corollary.

\begin{cor}[Favre] \label{corfav} Let $X$ be an arbitrary Noetherian topological space, and $f\colon X\to X$ a continuous map. Fix a function $\tau\in SC(X)$ and a nonempty irreducible closed set $E\subseteq X$. Let $L\subseteq \hat{X}$ be the $\omega$-limit set of $E$ for $\hat{f}$, and let $y_1,\ldots, y_r$ be the generic points of the components of $L$. Then \[
\lim_{n\to \infty} \frac{1}{n}\sum_{k=0}^{n-1}\tau(\hat{f}^k(E)) = \frac{1}{r}[\tau(y_1) + \cdots + \tau(y_r)].\]
\end{cor}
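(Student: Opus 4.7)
The plan is to reduce the corollary to Theorem E applied to $\hat{f}\colon \hat{X}\to \hat{X}$ on the Zariski space $\hat{X}$, then translate the resulting weak convergence of measures into a statement about values of $\tau$ via the isomorphism $\eta\colon SC(X)\to SC(\hat X)$ of \hyperref[iso]{Proposition~\ref*{iso}}.

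First, since $\hat{X}$ is a Zariski space and $\hat{f}$ is continuous, \hyperref[thmE]{Theorem~\ref*{thmE}} applies to the point $E\in \hat{X}$, yielding a periodic cycle $y_1,\ldots, y_r\in \hat{X}$ for $\hat{f}$ with
\[
\frac{1}{n}\sum_{k=0}^{n-1} \hat{f}^k_*\delta_E \longrightarrow \frac{1}{r}(\delta_{y_1}+\cdots+\delta_{y_r})
\]
weakly in $\mc{M}(\hat{X})$, where the $y_i$ are the generic points of the irreducible components of the smallest closed subset of $\hat{X}$ containing $\hat{f}^n(E)$ for all sufficiently large $n$. By \hyperref[asymp]{Lemma~\ref*{asymp}} applied to $\hat{f}$ on $\hat{X}$, this smallest closed set is exactly the $\omega$-limit set $L$, so the $y_i$ are as in the statement of the corollary.

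Second, set $\hat{\tau} := \eta(\tau)\in SC(\hat{X})$, so that by construction $\hat{\tau}(F) = \tau(F)$ (the generic value) for every $F\in \hat{X}$. Under the duality $\mc{M}(\hat{X})\cong SC(\hat{X})^*$ of \hyperref[thmB]{Theorem~\ref*{thmB}}, the weak convergence above pairs against $\hat{\tau}$ to give
\[
\frac{1}{n}\sum_{k=0}^{n-1} \int_{\hat X} \hat{\tau}\,d(\hat{f}^k_*\delta_E)\longrightarrow \frac{1}{r}\sum_{i=1}^r \int_{\hat X}\hat{\tau}\,d\delta_{y_i}.
\]
Because $\hat{X}$ is a Zariski space, the proof of \hyperref[thmA]{Theorem~\ref*{thmA}} shows that each $\delta_{y_i}$ is the ordinary Dirac probability measure at the point $y_i\in \hat{X}$, so $\int_{\hat X}\hat{\tau}\,d\delta_{y_i}=\hat{\tau}(y_i)=\tau(y_i)$. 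Similarly, \hyperref[push]{Proposition~\ref*{push}} applied to $\hat{f}$ (noting that points of $\hat X$ are themselves $\sigma$-irreducible closed subsets of $\hat X$ since $\hat X$ is Zariski) gives $\hat{f}^k_*\delta_E = \delta_{\hat{f}^k(E)}$, whence $\int_{\hat X}\hat{\tau}\,d(\hat{f}^k_*\delta_E) = \hat{\tau}(\hat{f}^k(E)) = \tau(\hat{f}^k(E))$. Substituting these identifications yields the stated formula.

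There is no substantive obstacle: the content of the corollary is essentially \hyperref[thmE]{Theorem~\ref*{thmE}} repackaged through the duality $\mc{M}(\hat X)\cong SC(\hat X)^*$ and the canonical identification $SC(X)\cong SC(\hat X)$. The only care required is in unwinding definitions to verify that Dirac masses on the Zariski space $\hat X$ pair with elements of $SC(\hat X)$ by evaluation at the corresponding point, and that $\eta$ carries $\tau\in SC(X)$ to the function sending a point of $\hat X$ to the generic value of $\tau$ there.
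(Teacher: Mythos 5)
Your proposal is correct and takes essentially the same route as the paper's (one-sentence) proof: apply \hyperref[thmE]{Theorem~\ref*{thmE}} to $\hat{f}\colon \hat{X}\to\hat{X}$ and unwind the weak convergence through the isomorphisms $\mc{M}(\hat{X})\cong SC(\hat{X})^*$ and $\eta\colon SC(X)\cong SC(\hat{X})$. You have simply filled in the bookkeeping (identifying $L$ with the smallest closed set via \hyperref[asymp]{Lemma~\ref*{asymp}}, pushing forward Dirac masses, and evaluating $\eta(\tau)$ at points of $\hat{X}$) that the paper leaves implicit.
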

\begin{proof} This is simply an application of \hyperref[thmE]{Theorem~\ref{thmE}} to the dynamical system $\hat{f}\colon \hat{X}\to \hat{X}$, written in terms of semicontinuous functions via the isomorphism $\mc{M}(\hat{X})\cong SC(\hat{X})^*$. 
\end{proof}

We now wish to do a similar analysis of the asymptotic behavior of the \emph{reverse} orbits of $f$, culminating in a proof of \hyperref[thmF]{Theorem~\ref*{thmF}}. For the rest of the section, we will need to assume that $f$ is surjective, as otherwise there may be points of $X$ which have no preimages at all, and hence no reverse orbit. Recall that we are assuming $X$ is a Zariski space.

\begin{Def} Let $x\in X$ be a point. A \emph{reverse orbit} of $x$ is a sequence $\{x_{-n}\}_{n=0}^\infty$ of points in $X$ such that $x_0 = x$ and $f(x_{-n}) = x_{-n+1}$ for all $n\geq 1$. If $f$ is surjective, every point has at least one reverse orbit.
\end{Def}

\begin{Def} Let $x\in X$ be a point, and let $\bk{x} = \{x_{-n}\}_{n=0}^\infty$ be a given reverse orbit of $x$. Define the \emph{$\alpha$-limit set} of  $\bk{x}$ to be the closed set \[
A(\bk{x}) := \bigcap_{k\geq 0} \ol{\{x_{-n} : n\geq k\}}\subseteq X.\] 
\end{Def}

\begin{lem}\label{alpha} Let $x\in X$, and let $\bk{x} = \{x_{-n}\}_{n=0}^\infty$ be a reverse orbit of $x$. Let $A$ be the $\alpha$-limit set of $\bk{x}$. Then  \begin{enumerate}
\item[1.] The generic points of the irreducible components of $A$ form a periodic cycle for $f$.
\item[2.] $A = \ol{\{x_{-n} : n\geq k\}}$ for every $k\geq 0$. 
\end{enumerate} In particular, $A = \ol{\{x_{-n} : n\geq 0\}}$.
\end{lem}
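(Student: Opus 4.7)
The plan is to mirror the strategy used in the forward-orbit analog (\hyperref[asymp]{Lemma~\ref*{asymp}}), with one additional ingredient needed to handle the fact that here the entire reverse orbit $\{x_{-n}\}_{n=0}^\infty$ (not just a tail) must lie in $A$.

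I would first observe that $L_k := \ol{\{x_{-n} : n \geq k\}}$ is a decreasing chain of closed sets, so by Noetherianity it stabilizes: there is $N$ with $L_k = A$ for all $k \geq N$. Using continuity of $f$ and $f(x_{-n}) = x_{-n+1}$, one has $\ol{f(L_k)} = \ol{\{x_{-n+1} : n \geq k\}} = L_{k-1}$, and taking $k \geq N+1$ yields $\ol{f(A)} = A$. In particular $f(A) \subseteq A$, so $A$ is forward-invariant.

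Next I would write the irreducible decomposition $A = A_1 \cup \cdots \cup A_r$ with generic points $y_j$. The relation $A = \ol{f(A)} = \bigcup_j \ol{f(A_j)}$, combined with the fact that each $\ol{f(A_j)}$ is irreducible, yields a permutation $\sigma$ of $\{1,\ldots,r\}$ with $\ol{f(A_j)} = A_{\sigma(j)}$. Part 2 then follows from the forward-invariance: since $x_{-N} \in A$, iterating gives $x_{-N+m} = f^m(x_{-N}) \in A$ for $0 \leq m \leq N$, which together with $\{x_{-n} : n \geq N\} \subseteq A$ puts the entire reverse orbit inside $A$, so $\ol{\{x_{-n} : n \geq 0\}} \subseteq A$; the reverse inclusion $A \subseteq L_0$ is trivial.

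For transitivity of $\sigma$ (Part 1), I would use the density of the tail $\{x_{-n} : n \geq N\}$ in $A$ together with the fact that each $U_j := A_j \setminus \bigcup_{j' \neq j} A_{j'}$ is a nonempty open subset of $A$, so for every $j$ there are arbitrarily large $k$ with $x_{-k} \in U_j$. Fix $k_0 \geq N$ with $x_{-k_0} \in U_{i_0}$; the iteration $x_{-k_0+m} = f^m(x_{-k_0}) \in A_{\sigma^m(i_0)}$ for $0 \leq m \leq k_0$ shows $\{x_{-k} : 0 \leq k \leq k_0\} \subseteq \bigcup_{j \in O(i_0)} A_j$, where $O(i_0)$ denotes the $\sigma$-orbit of $i_0$. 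If some $j_0 \notin O(i_0)$, density supplies $k \geq k_0$ with $x_{-k} \in U_{j_0}$, and the same iteration starting from $k$ forces $x_{-k_0} \in \bigcup_{j \in O(j_0)} A_j$; combined with $x_{-k_0} \in U_{i_0}$, this gives $i_0 \in O(j_0)$, so $O(i_0) = O(j_0) \ni j_0$, contradicting $j_0 \notin O(i_0)$. The main obstacle is precisely the strengthening in Part 2 relative to \hyperref[asymp]{Lemma~\ref*{asymp}}: there the forward orbit entering $L$ for large $n$ is automatic from the definition of $L$, whereas here the \emph{entire} reverse orbit (including the initial segment $x_0, x_{-1}, \ldots, x_{-N+1}$) must be shown to sit in $A$, and this is exactly what the forward-invariance $f(A) \subseteq A$ delivers.
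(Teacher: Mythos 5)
Your proposal is correct and follows essentially the same strategy as the paper's proof: stabilize the nested chain $L_k$ by Noetherianity to get $A = L_N$, observe $\ol{f(A)} = A$ so that $f$ permutes the generic points of the components, and push the reverse orbit forward through the induced permutation to obtain transitivity and to place the entire orbit inside $A$. The only difference is organizational --- you deduce Part 2 directly from forward-invariance of $A$ applied to $x_{-N}$ and argue transitivity via a density-and-open-sets contradiction, whereas the paper locates a single component visited infinitely often by pigeonhole and derives both conclusions at once --- but the underlying mechanism is identical.
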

\begin{proof} By definition, $A$ is the intersection of a nested sequence of closed sets. Since $X$ is Noetherian, this sequence must stabilize. In other words, there is an index $N\geq 1$ such that $A = \ol{\{x_{-n} : n\geq k\}}$ for all $k\geq N$. Suppose $A$ has irreducible decomposition $A =A_1\cup\cdots\cup A_r$, and let $y_i\in X$ be the generic point of $A_i$ for each $i$. Observe that \[
\ol{f(A)} = \ol{f(\{x_{-n} : n\geq N+1\})} = \ol{\{x_{-n}: n\geq N\}} = A.\] It follows that $f$ permutes the $y_i$; to complete the proof, we must show that in fact $f$ acts transitively on the $y_i$. Since $x_{-n}\in A$ for $n$ sufficiently large, there must be at least one index $i$ such that $x_{-n}\in A_i$ for infinitely many $n$. Up to relabeling, we may assume without loss of generality that $i = 1$, and that $y_1,\ldots, y_s$ is the $f$-orbit of $y_1$.  It follows immediately that $x_{-n}\in A_1\cup\cdots\cup A_s$ for all $n$, and hence that $A\subseteq A_1\cup\cdots\cup A_s$. Therefore $r = s$, and one has $A = \{x_{-n} : n\geq k\}$ for every $k\geq 0$.
\end{proof}

\begin{proof}[Proof of Theorem \ref*{thmF}] Let $\mu_n = n^{-1}\sum_{k=0}^{n-1}\delta_{x_{-k}}$ for each $n\geq 1$. The space of Borel probability measures on $X$ is weakly sequentially compact by \hyperref[thmC]{Theorem~\ref*{thmC}}. Thus to prove the theorem, it suffices to show that every weakly convergent subsequence of $\mu_n$ converges to the measure $r^{-1}(\delta_{y_1} + \cdots + \delta_{y_r})$, where the $y_i$ are the generic points of the irreducible components of the $\alpha$-limit set $A = A(\bk{x})$. Fix a weakly convergent subsequence $\mu_{n_i}$ of $\mu_n$, say with $\mu_{n_i}\to \mu$ as $i\to \infty$. Since $f_*$ is weakly continuous by \hyperref[ctn]{Proposition~\ref*{ctn}}, we see that \[
f_*\mu = \lim_{i\to \infty} \frac{1}{n_i}\sum_{k=0}^{n_i-1}f_*\delta_{x_{-k}} = \lim_{i\to \infty}\left(\mu_{n_i} + \frac{f_*\delta_x - \delta_{x_{-n_i+1}}}{n_i}\right) = \lim_{i\to \infty} \mu_{n_i} = \mu.\] Thus $\mu$ is $f$-invariant, and we conclude by \hyperref[inv]{Proposition~\ref*{inv}} that $\mu = \sum_{j=1}^\infty c_j\nu_j$, where $c_j>0$ for each $j$ and the $\nu_j$ are $f$-ergodic probability measures. Fix an index $j$, and suppose that $\nu_j = s^{-1}(\delta_{z_1} + \cdots + \delta_{z_s})$. Let $Z = \ol{\{z_1,\ldots, z_s\}}$. Since $z_1,\ldots, z_s$ is a periodic cycle for $f$, one has $f(Z)\subseteq Z$. Note that \[
0<c_j \leq \mu(Z) = \lim_{i\to \infty} \frac{1}{n_i}\sum_{k=0}^{n_i-1} \delta_{x_{-k}}(Z) = \lim_{i\to \infty} \frac{1}{n_i}\sum_{k=0}^{n_i-1}\chi_Z(x_{-k}).\] There must then be infinitely many $k$ such that $x_{-k}\in Z$, and hence $x_{-k}\in Z$ for all $k$, since $f(Z)\subseteq Z$. Because $A$ is the smallest closed set containing each of the $x_{-k}$, we conclude that $A\subseteq Z$. If $A\neq Z$, then  $z_1,\ldots, z_s\notin A$, thus $\mu(A)\leq 1 - c_j<1$. But \[
\mu(A) = \lim_{i\to\infty} \frac{1}{n_i}\sum_{k=0}^{n_i-1}\chi_A(x_{-k})= 1\] since $x_{-k}\in A$ for all $k$. Therefore $A = Z$, and $\nu_j = r^{-1}(\delta_{y_1} + \cdots + \delta_{y_r})$. As $j$ was an arbitrary index, it follows that $\mu = r^{-1}(\delta_{y_1}  +\cdots + \delta_{y_r})$. This completes the proof.
\end{proof}

\begin{cor} Let $X$ be an arbitrary Noetherian topological space, and let $f\colon X\to X$ be a surjective continuous map. Let $\tau\in SC(X)$. Fix a nonempty irreducible closed set $E\subseteq X$ as well as a $\hat{f}$-reverse orbit $\bk{E} = (E_{-n})_{n=0}^\infty$ for $E$. Let $A\subseteq \hat{X}$ be the $\alpha$-limit set of $\bk{E}$, and let $y_1,\ldots, y_r$ be the generic points of the irreducible components of $A$. Then \[
\lim_{n\to \infty} \frac{1}{n}\sum_{k=0}^{n-1} \tau(E_{-k}) = \frac{1}{r}[\tau(y_1) + \cdots + \tau(y_r)].\]
\end{cor}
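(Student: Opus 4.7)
The plan is to deduce this corollary from Theorem~\ref{thmF} applied to the induced dynamical system $\hat{f}\colon\hat{X}\to\hat{X}$, exactly as Corollary~\ref{corfav} was deduced from Theorem~\ref{thmE}. Since $\hat{X}$ is a Zariski space, Theorem~\ref{thmF} is available provided $\hat{f}$ is surjective, and the translation from measures to semicontinuous functions is provided by \hyperref[iso]{Proposition~\ref*{iso}}.

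First I would verify that $\hat{f}$ is surjective. Given $F\in\hat{X}$, i.e.\ a nonempty irreducible closed set $F\subseteq X$, surjectivity of $f$ gives $f(f^{-1}(F)) = F$. Decomposing $f^{-1}(F)=G_1\cup\cdots\cup G_m$ into its finitely many irreducible components, one obtains
\[
F = \ol{f(f^{-1}(F))} = \ol{f(G_1)}\cup\cdots\cup\ol{f(G_m)},
\]
so irreducibility of $F$ forces $\ol{f(G_i)}=F$ for some $i$; that is, $\hat{f}(G_i)=F$.

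With $\hat{f}$ a continuous surjection of the Zariski space $\hat{X}$, \hyperref[thmF]{Theorem~\ref*{thmF}} applies to the reverse orbit $\bk{E}=(E_{-n})_{n=0}^\infty$ in $\hat{X}$. It yields the weak convergence
\[
\frac{1}{n}\sum_{k=0}^{n-1}\delta_{E_{-k}}\ \longrightarrow\ \frac{1}{r}\bigl(\delta_{y_1}+\cdots+\delta_{y_r}\bigr)
\]
in $\mc{M}(\hat{X})$, where the $y_i$ are precisely the generic points of the irreducible components of $A = \ol{\{E_{-n}:n\geq 0\}}\subseteq\hat{X}$, which is the $\alpha$-limit set of $\bk{E}$ for $\hat{f}$.

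Finally, I would test both sides against the function $\eta(\tau)\in SC(\hat{X})$ supplied by \hyperref[iso]{Proposition~\ref*{iso}}, which by definition satisfies $\eta(\tau)(G)=\tau(G)$ (generic value) for every $G\in\hat{X}$, so in particular $\eta(\tau)(E_{-k})=\tau(E_{-k})$ and $\eta(\tau)(y_i)=\tau(y_i)$. By \hyperref[thmB]{Theorem~\ref*{thmB}} applied to $\hat{X}$, weak convergence of the measures is exactly convergence of their pairings with elements of $SC(\hat{X})$, so pairing with $\eta(\tau)$ yields the desired formula. There is no serious obstacle; the only point requiring genuine verification is the surjectivity of $\hat{f}$, after which the result is a formal consequence of the completion machinery of \S5 and \hyperref[thmF]{Theorem~\ref*{thmF}}.
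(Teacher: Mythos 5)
Your proposal is correct and takes essentially the same approach as the paper: apply Theorem~\ref{thmF} to $\hat{f}\colon\hat{X}\to\hat{X}$ and translate the weak convergence of measures into convergence of integrals against $\eta(\tau)$ via the isomorphism $\mc{M}(\hat{X})\cong SC(\hat{X})^*$. You are in fact slightly more careful than the paper, which silently invokes Theorem~\ref{thmF} for $\hat{f}$ without checking that $\hat{f}$ inherits surjectivity from $f$; your argument for this (decompose $f^{-1}(F)$ into components $G_1,\ldots,G_m$ and use irreducibility of $F$ to force $\ol{f(G_i)}=F$ for some $i$) is correct and worth writing down.
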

\begin{proof} This is simply an application of \hyperref[thmF]{Theorem~\ref*{thmF}} to the dynamical system $\hat{f}\colon \hat{X}\to \hat{X}$, written in terms of semicontinuous functions via the isomorphism $\mc{M}(\hat{X})\cong SC(\hat{X})^*$.
\end{proof}

In \cite{MR2513537}, Dinh gives a different analysis of the asymptotic behavior of reverse orbits  in the specific case that $f$ is an open endomorphism of a compact complex analytic space. For the remainder of the section, we will expand on his results, proving them for more general Noetherian spaces and relating them to \hyperref[thmE]{Theorem~\ref*{thmE}} and \hyperref[thmF]{Theorem~\ref*{thmF}}. We once again assume $f\colon X\to X$ is a surjective continuous map, with $X$ a nonempty Zariski space.

Fix a bounded upper semicontinuous function $\tau$ on $X$. For each $n\geq 1$,  define \[
\tau_n := \sum_{k=0}^{n-1} \tau\circ f^k.\] Note that the $\tau_n$ are again bounded upper semicontinuous functions on $X$. For any $x\in X$,  \hyperref[thmE]{Theorem~\ref*{thmE}} asserts that  \[
\tau_+(x):= \lim_{n\to \infty} \frac{\tau_n(x)}{n} = \frac{1}{r}(\tau(y_1) + \cdots + \tau(y_r)),\] where the $y_i$ are the generic points of the irreducible components of the $\omega$-limit set of $E$. To study reverse orbits, Dinh defines functions $\tau_{-n}\colon X\to \R$ by \[
\tau_{-n}(x) := \sup_{f^n(y) = x} \tau_n(y),\] for each $n\geq 1$, and considers the analogous quantity \begin{equation}\label{eqn}
\tau_{-}(x):= \lim_{n\to \infty} \frac{\tau_{-n}(x)}{n}.\end{equation} We spend the rest of the section proving the following theorem, which generalizes the results in \cite{MR2513537}.

\begin{thm}\label{dinh} The limit in \hyperref[eqn]{Equation~(\ref*{eqn})} exists for any point $x\in X$, and the limit function $\tau_-\colon X\to \R$ satisfies the following properties: \begin{enumerate}
\item[1.] For any $x\in X$, one has $\tau_-(x) = \max \tau_+(y)$, where the maximum is taken over periodic points $y$ such that $x\in \ol{\{y\}}$.
\item[2.] For any $x\in X$, one has $\tau_-(x) = \max \lim_{n\to \infty}\tau_n(x_{-n})/n$, where the maximum is taken over all reverse orbits $\{x_{-n}\}_{n=0}^\infty$ of $x$.
\item[3.] The function $\tau_-\colon X\to \R$ is upper semicontinuous.
\item[4.] One has $\tau_-(x)\leq \tau_+(x)$ for all $x\in X$. If $x$ is periodic, then equality holds.
\end{enumerate}
\end{thm}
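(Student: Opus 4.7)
My plan is to first prove (1) --- that $\tau_-(x) := \lim_{n\to\infty}\tau_{-n}(x)/n$ exists and equals $M(x) := \sup\{\tau_+(y) : y\in P(x)\}$, where $P(x) := \{y\in X : y\text{ is $f$-periodic and }x\in\ol{\{y\}}\}$, and that this supremum is attained. Parts (2), (3), (4) will then follow. The main ingredients are the weak sequential compactness of \hyperref[thmC]{Theorem~\ref*{thmC}}, the structure of $f$-invariant measures (\hyperref[inv]{Proposition~\ref*{inv}}), the reverse-orbit theorem \hyperref[thmF]{Theorem~\ref*{thmF}}, and the upper semicontinuity inequality $\tau(z)\geq\tau(\xi)$ for $z\in\ol{\{\xi\}}$.

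For the upper bound $\limsup_n\tau_{-n}(x)/n\leq M(x)$, choose for each $n$ a preimage $y_n\in f^{-n}(\{x\})$ with $\tau_n(y_n)\geq\tau_{-n}(x)-1$ and form $\mu_n := n^{-1}\sum_{k=0}^{n-1}\delta_{f^k(y_n)}$. A subsequence $\mu_{n_j}$ converges weakly to a probability measure $\mu$ by \hyperref[thmC]{Theorem~\ref*{thmC}}; the identity $f_*\mu_n - \mu_n = n^{-1}(\delta_x-\delta_{y_n})\to 0$ and \hyperref[ctn]{Proposition~\ref*{ctn}} imply $\mu$ is $f$-invariant, so $\mu = \sum_i c_i\mu_i$ with $\mu_i = r_i^{-1}(\delta_{z_1^{(i)}}+\cdots+\delta_{z_{r_i}^{(i)}})$ uniform on a periodic cycle and $c_i>0$, by \hyperref[inv]{Proposition~\ref*{inv}}. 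For each $i$, weak convergence yields $\mu_{n_j}(\ol{\{z_\ell^{(i)}\}})>0$ for some $\ell$ and large $j$, producing some $f^k(y_{n_j})\in\ol{\{z_\ell^{(i)}\}}$; applying $f^{n_j-k}$ puts $x\in\ol{\{f^{n_j-k}(z_\ell^{(i)})\}}$, and since $f^{n_j-k}(z_\ell^{(i)})$ lies in the same periodic cycle (hence in $P(x)$) with the same $\tau_+$-value, $\tau_+(z_\ell^{(i)})\leq M(x)$. Thus $\int\tau\,d\mu = \sum_i c_i\tau_+(z_\ell^{(i)})\leq M(x)$, and since $\int\tau\,d\mu = \lim_j\tau_{n_j}(y_{n_j})/n_j = \limsup_n\tau_{-n}(x)/n$, the bound follows; when matched by the lower bound below, equality forces each $\tau_+(z_\ell^{(i)}) = M(x)$, so $M(x)$ is attained by the explicit periodic point $f^{n_j-k}(z_\ell^{(i)})\in P(x)$.

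For the lower bound $\liminf_n\tau_{-n}(x)/n\geq M(x)$, given any $y\in P(x)$ with cycle $y_0=y, y_1, \ldots, y_{r-1}$, I will construct a reverse orbit $(x_{-n})_{n\geq 0}$ of $x$ with $x_{-n}\in\ol{\{y_{(r-n)\bmod r}\}}$ for all $n$. Upper semicontinuity then gives $\tau(x_{-n})\geq\tau(y_{(r-n)\bmod r})$, so $\tau_N(x_{-N})/N = N^{-1}\sum_{n=1}^N\tau(x_{-n})\geq N^{-1}\sum_{n=1}^N\tau(y_{(r-n)\bmod r})\to\tau_+(y)$ (the cycle average); combined with $\tau_{-N}(x)\geq\tau_N(x_{-N})$, this yields the desired bound after taking supremum over $y\in P(x)$. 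The construction proceeds inductively in $\hat{X}$: put $E_0 := \ol{\{x\}}\subseteq\ol{\{y_0\}}$, and given an irreducible closed $E_{-n}\subseteq\ol{\{y_{(r-n)\bmod r}\}}$, take $E_{-n-1}$ to be an irreducible component of $f^{-1}(E_{-n})\cap\ol{\{y_{(r-n-1)\bmod r}\}}$ whose image under $f$ is dense in $E_{-n}$, and let $x_{-n}$ be the generic point of $E_{-n}$, so $f(x_{-n-1}) = x_{-n}$ follows automatically. The main obstacle, and the technical heart of the proof, is to verify that such $E_{-n-1}$ always exists, which amounts to checking $E_{-n}\subseteq f(\ol{\{y_{(r-n-1)\bmod r}\}})$; I plan to resolve this by analyzing the descending sequence $\ol{f^{mr}(\ol{\{y_0\}})}$, which stabilizes by Noetherianity to an $f^r$-invariant irreducible closed set $K\subseteq\ol{\{y_0\}}$ on which $f^r$ acts surjectively, and then using surjectivity of $\hat{f}$ to steer the reverse orbit into $K$ and its $f$-translates in finitely many initial steps, after which the cycle-compatible construction succeeds.

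The remaining parts follow. (2) is immediate from \hyperref[thmF]{Theorem~\ref*{thmF}} (every reverse orbit has time-average $\tau_+(y^*)$ for some $y^*\in P(x)$) together with the lower-bound construction (which realizes each $y\in P(x)$). For (4): $x$ periodic gives $x\in P(x)$ and thus $\tau_-(x)\geq\tau_+(x)$; conversely, for any $y\in P(x)$ the relation $f^j(x)\in\ol{\{f^j(y)\}}$ (a consequence of $x\in\ol{\{y\}}$ and the continuity of $f$) combined with upper semicontinuity and averaging over a period common to $y$'s cycle and $x$'s $\omega$-limit cycle (using a coset decomposition to handle unequal periods, with the bound $\max\geq\text{average}$ within each coset) yields $\tau_+(x)\geq\tau_+(y)$, whence $\tau_+(x)\geq M(x) = \tau_-(x)$. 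For (3), the upper semicontinuity of $\tau_-$: the inclusion $P(x')\supseteq P(x)$ for $x'\in\ol{\{x\}}$ shows $\tau_-$ is monotone under specialization, so $\{\tau_-\geq r\}$ is closed under specialization; to see it is genuinely closed, write it as $\bigcup_{y\in P_r}\ol{\{y\}}$ with $P_r := \{y\text{ periodic}:\tau_+(y)\geq r\}$, observe that each cycle contributing to $P_r$ must contain a point of the closed set $\{\tau\geq r\}$ (its average exceeds $r$, so its maximum does too), and invoke Noetherianity --- $\{\tau\geq r\}$ has finitely many irreducible components --- to conclude that only finitely many distinct maximal closures $\ol{\{y\}}$ appear in the union, making it closed.
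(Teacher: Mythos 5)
Your proposal takes a genuinely different route from the paper, particularly for the upper bound on $\limsup_n \tau_{-n}(x)/n$. The paper works directly with the closed level set $Z = \{y : \tau_n(y)\geq cn\ \text{for all}\ n\geq 1\}$: \hyperref[jump]{Lemma~\ref*{jump}} and \hyperref[unif]{Proposition~\ref*{unif}} show that orbits that avoid $Z$ for a long time have strictly smaller time-averages, and the proof of \hyperref[exists]{Proposition~\ref*{exists}} then shows that $x$ must lie in $L(z_i)$ for one of the generic points $z_i$ of $Z$. Your argument instead forms empirical measures along near-optimal backward orbits, extracts a weak limit via \hyperref[thmC]{Theorem~\ref*{thmC}}, and uses the ergodic decomposition of \hyperref[inv]{Proposition~\ref*{inv}}. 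This is a clean alternative; it buys you a more conceptual proof of the upper bound, at the cost of needing the weak sequential compactness machinery. But there are two genuine gaps.

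First, your plan for the lower bound fails. You correctly identify the need to produce a reverse orbit $x_{-n}\in\ol{\{y_n\}}$ as "the technical heart" --- this is precisely the content of the paper's \hyperref[obvious]{Proposition~\ref*{obvious}}, which the paper asserts without argument. Your proposed repair, however, is vacuous: since $f^r(y_0) = y_0$, one has $y_0\in f^{mr}(\ol{\{y_0\}})$ for every $m\geq 0$, hence $\ol{f^{mr}(\ol{\{y_0\}})} = \ol{\{y_0\}}$ identically. The sequence never decreases, the stabilized set $K$ is just $\ol{\{y_0\}}$, and no new information is obtained about whether $E_{-n}\subseteq f(\ol{\{y_{(r-n-1)\bmod r}\}})$. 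As it stands, your construction of the reverse orbit is incomplete.

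Second, your argument for upper semicontinuity of $\tau_-$ does not go through. You write $\{\tau_-\geq r\} = \bigcup_{y\in P_r}\ol{\{y\}}$ and try to establish finiteness using the closed set $\{\tau\geq r\}$. But $\{\tau\geq r\}$ controls only the instantaneous value of $\tau$ at a single point of each cycle, not the asymptotic average $\tau_+$, and the remaining points of a cycle (and their closures) need not lie in $\{\tau\geq r\}$ at all. Moreover, the claim that "only finitely many distinct maximal closures appear" requires an ascending-chain argument, but Noetherianity is a descending-chain condition; there is no a priori reason the family $\{\ol{\{y\}} : y\in P_r\}$ has any maximal elements. The paper sidesteps all of this by using the dynamically defined level set $Z = \{y : \tau_n(y)\geq rn\ \text{for all}\ n\}$: the proof of \hyperref[exists]{Proposition~\ref*{exists}} shows $\{\tau_-\geq r\} = L(z_1)\cup\cdots\cup L(z_m)$, a manifestly finite union of closed sets. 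The set $\{\tau\geq r\}$ is not the right object here.

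Your handling of (2) and (4) is essentially sound (the coset-averaging argument for (4) fills in a step the paper compresses to ``$L(y)\supseteq L(x)$, so $\tau_+(y)\leq\tau_+(x)$''), but both (2) and the attainment of the maximum in (1) depend on the reverse-orbit construction, so the gap in that step propagates.
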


The bulk of the proof will be showing that the limit in \hyperref[eqn]{Equation~(\ref*{eqn})} exists; the remaining statements will follow without too much work from the proof of this fact. We should note that the argument given below is similar in spirit to the argument given by Dinh, but differs in some significant ways. For instance, Dinh heavily uses the fact that his spaces $X$ are of finite Krull dimension and that his maps $f$ preserve Krull dimension. We make no such assumptions.

\begin{lem}\label{jump} Let $\tau\colon X\to \R$ be an upper semicontinuous function on $X$, and let $c\in \R$. Then there is an $a<c$ such that $\tau(x)<a$ for any $x\in X$ with $\tau(x)<c$.
\end{lem}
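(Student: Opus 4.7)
The plan is to recast the statement in terms of the closed level sets of $\tau$ and then extract the required $a$ from the Noetherian hypothesis. For each real $r$ set $E_r := \{x\in X : \tau(x)\geq r\}$, which is closed by upper semicontinuity, and note that $r<r'$ implies $E_r\supseteq E_{r'}$. The desired conclusion is equivalent to the existence of some $a<c$ with $E_a = E_c$: indeed, if $E_a = E_c$ then any $x$ with $\tau(x)<c$ lies outside $E_c = E_a$, hence satisfies $\tau(x)<a$; conversely, the existence of the required $a$ forces there to be no $x$ with $a\leq \tau(x)<c$, which is precisely $E_a = E_c$.

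To produce such an $a$, I would choose any sequence $r_n\nearrow c$, for concreteness $r_n = c - 1/n$. The closed sets $E_{r_n}$ then form a descending chain in $X$, so by Noetherianity the chain stabilizes: there is an index $N$ such that $E_{r_n} = E_{r_N}$ for all $n\geq N$. A direct computation gives
\[
\bigcap_{n\geq N} E_{r_n} = \{x\in X : \tau(x)\geq r_n\mbox{ for all }n\geq N\} = \{x\in X : \tau(x)\geq \sup_n r_n\} = E_c,
\]
so $E_{r_N} = E_c$. Taking $a := r_N < c$ then yields the claim.

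There is no serious obstacle here; the entire content is the observation that an upper semicontinuous function on a Noetherian space cannot ``approach'' a threshold value $c$ without actually attaining values in $[a,c)$ for arbitrarily small $a<c$, because this would generate an infinite strictly decreasing chain of closed level sets. The verification of the intersection identity $\bigcap_n E_{r_n} = E_c$ is immediate from the definition, and everything else is purely formal.
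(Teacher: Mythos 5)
Your argument is correct and follows essentially the same route as the paper: both use the Noetherian hypothesis on the nested family of closed level sets $\{x : \tau(x)\geq r\}$ to force stabilization below $c$. Your only cosmetic difference is that you run the argument along a countable sequence $r_n\nearrow c$ and invoke the descending chain condition, whereas the paper phrases it directly in terms of the whole family $\{X_a : a<c\}$ having a minimal element; these are equivalent formulations of Noetherianity and the proof content is identical.
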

\begin{proof} For any real value $a\in \R$, the set $X_a = \{x\in X : \tau(x)\geq a\}$ is closed, and moreover $X_a\subseteq X_{b}$ whenever $b\leq a$. Thus the family $X_a$ of sets with $a<c$ is a nested family of closed sets. Since $X$ is Noetherian, there must be an $a<c$ such that $X_b = X_{a}$ for all $a<b<c$. This completes the proof.
\end{proof}

\begin{prop}\label{unif} Fix $c\in \R$, and let $Z = \{x\in X: \tau_n(x)\geq cn\mbox{ for all }n\geq 1\}$. Then there is a real number $b<c$ and an integer $N\geq 1$ with the following property: if $n\geq N$ and $x\in X$ is such that $f^k(x)\notin Z$ for all $k = 0,\ldots, n$, then $\tau_n(x)\leq bn$.
\end{prop}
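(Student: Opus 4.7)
The plan is to combine Noetherianity (to make the definition of $Z$ finitary) with the jump lemma (to produce a uniform gap) and then to run a greedy decomposition of the orbit $x, f(x), \ldots, f^{n-1}(x)$ into blocks on which $\tau$ averages below $c$.

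First I would replace the infinite intersection defining $Z$ by a finite one. For each integer $k\geq 1$ the set $Z_k := \{x\in X : \tau_m(x)\geq cm \text{ for all } 1\leq m\leq k\}$ is closed, since each $\tau_m$ is a finite sum of upper semicontinuous functions composed with the continuous map $f$. These sets form a descending chain with intersection $Z$, so Noetherianity of $X$ gives an integer $K$ with $Z_K = Z$. Thus if $x\notin Z$, there is some $m\in\{1,\ldots,K\}$ with $\tau_m(x) < cm$.

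Next, I would apply \hyperref[jump]{Lemma~\ref*{jump}} to each of the finitely many upper semicontinuous functions $\tau_1,\ldots,\tau_K$, obtaining, for each $m\leq K$, a real number $a_m < cm$ such that $\tau_m(x) < cm$ forces $\tau_m(x)\leq a_m$. Setting $\varepsilon := \min_{1\leq m\leq K}(cm-a_m)/m > 0$, I conclude: for every $x\notin Z$ there exists $m = m(x) \in\{1,\ldots,K\}$ with
\[
\tau_{m}(x) \leq (c-\varepsilon)\, m.
\]

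Now for the main step, fix $x$ with $f^k(x)\notin Z$ for all $0\leq k\leq n$, and build a greedy partition of $[0,n)$. Set $m_0 := 0$, and given $m_i$ with $m_i\leq n-K$, let $\ell_i := m(f^{m_i}(x))\in\{1,\ldots,K\}$ and $m_{i+1} := m_i+\ell_i$; stop at the first index $I$ with $m_I > n-K$. Since each $\ell_i\leq K$, we have $n-K < m_I \leq n$. Summing the block inequalities gives
\[
\tau_{m_I}(x) = \sum_{i=0}^{I-1}\tau_{\ell_i}\!\bigl(f^{m_i}(x)\bigr) \leq (c-\varepsilon)\, m_I,
\]
and bounding the leftover length $n-m_I < K$ by $M := \sup_X|\tau|$ yields
\[
\tau_n(x) \leq (c-\varepsilon)\, m_I + M(n-m_I) \leq (c-\varepsilon)n + \bigl(M - (c-\varepsilon)\bigr)^+ K.
\]
Choosing $b := c - \varepsilon/2$ and $N$ large enough that the constant term is $\leq (\varepsilon/2) n$ for all $n\geq N$ completes the proof.

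The only subtle point is the reduction in the first paragraph, where Noetherianity of $X$ (rather than any compactness property) is what bounds the "witness time" for $x\notin Z$ uniformly by $K$; everything after that is a standard subadditive/greedy argument and should go through without surprise.
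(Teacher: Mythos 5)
Your proof is correct and follows the paper's argument essentially verbatim: Noetherianity reduces $Z$ to a finite intersection (your $Z_K$, the paper's $V_1\cap\cdots\cap V_M$), the jump lemma supplies a uniform gap $c-\varepsilon$ (playing the role of the paper's single $a<c$), and the identical greedy block decomposition of the orbit finishes the estimate. If anything, your use of $\bigl(M-(c-\varepsilon)\bigr)^{+}$ to control the leftover block is slightly more careful than the paper's inequality $ak_\ell + \|\tau\|(n-k_\ell)\le an + \|\tau\|M$, which tacitly assumes $a\ge 0$, but this is cosmetic and does not change the substance.
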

\begin{proof} Let $V_n = \{x\in X : \tau_n(x)\geq cn\}$ for each $n\geq 1$, and let $U_n = X\smallsetminus V_n$. Note that the $V_n$ are closed, since the $\tau_n$ are upper semicontinuous. By definition, $Z = \bigcap V_n$. Since $X$ is Noetherian, there is an integer $M\geq 1$ such that $Z = V_1\cap \cdots\cap V_M$. Using \hyperref[jump]{Lemma~\ref*{jump}}, there is a real number $a<c$ such that $\tau_n(x)<an$ for any $x\in U_n$, where $n = 1,\ldots, M$. We choose $N$ large enough that $a + \|\tau\|M/N<c$, and let $b = a + \|\tau\|M/N$.

Suppose that $n\geq N$ and that $x\in X$ is such that $f^k(x)\notin Z$ for all $k = 0,\ldots, n$. We recursively define a finite sequence $k_i$ of integers as follows. First, we set $k_1 = 0$. Assume now that $k_i$ has been defined. If $n - k_i\leq M$, we stop defining the $k_i$. If $n - k_i>M$, then by hypothesis $f^{k_i}\in U_j$ for some $j = 1,\ldots, M$. We then set $k_{i+1} = k_i + j$. Let $k_1,\cdots, k_\ell$ be the sequence constructed in this fashion. By construction, one has $0\leq n - k_\ell\leq M$ and $\tau_{k_\ell}(x)\leq ak_\ell$. It follows that \[
\tau_n(x) = \tau_{k_\ell}(x) + \tau_{n - k_\ell}(f^{k_\ell}(x))\leq ak_\ell + \|\tau\|(n - k_\ell)\leq an + \|\tau\|M = (a + \|\tau\|M/n)n\leq bn,\] as desired. 
\end{proof}

\begin{prop}\label{obvious} Suppose $y\in X$ is periodic and $x\in \ol{\{y\}}$. Then \[
\liminf_{n\to \infty} \tau_{-n}(x)/n\geq \tau_+(y).\]
\end{prop}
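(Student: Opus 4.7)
My plan is to construct an explicit reverse orbit $\{x_{-n}\}_{n \geq 0}$ of $x$ that tracks the periodic cycle of $y$ closely, and then bound $\tau_{-n}(x)$ from below by $\tau_n(x_{-n})$ using upper semicontinuity of $\tau$. Write $r$ for the period of $y$ and set $y_i = f^i(y)$, so that the closed sets $E_i := \overline{\{y_i\}}$ cycle under $f$: we have $f(E_{i-1 \bmod r}) \subseteq E_i$ for every $i$ by continuity. I aim to choose $x_{-n} \in E_{-n \bmod r}$ for each $n$, with $f(x_{-n-1}) = x_{-n}$.

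The crucial step, and what I expect to be the main obstacle, is establishing that each restriction $f \colon E_{i-1 \bmod r} \to E_i$ is actually surjective, not merely has dense image. I would deduce this from the following general lemma: if $g \colon X \to X$ is a continuous surjection of a Noetherian space and $E \subseteq X$ is closed with $g(E) \subseteq E$, then $g(E) = E$. To prove the lemma, observe that $E \subseteq g^{-1}(E) \subseteq g^{-2}(E) \subseteq \cdots$ is an ascending chain of closed sets, which stabilizes by Noetherianity; the equality $g^{-N}(E) = g^{-N-1}(E)$ says $g^N(x) \in E \Leftrightarrow g^{N+1}(x) \in E$ for all $x$, which through surjectivity of $g^N$ is equivalent to $g^{-1}(E) = E$, and then $g(E) = E$ follows because any $e \in E$ can be written $e = g(w)$ with $w \in g^{-1}(E) = E$. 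Applying this to $g = f^r$ and each $E_i$ (which is $f^r$-invariant by continuity, since $f^r(y_i) = y_i$) gives $f^r(E_i) = E_i$; factoring $f^r = f \circ f^{r-1}$ and using $f^{r-1}(E_i) \subseteq E_{i-1 \bmod r}$ then forces $f(E_{i-1 \bmod r}) \supseteq E_i$, hence equality.

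With this surjectivity in place, construct $\{x_{-n}\}$ recursively by setting $x_0 = x \in E_0$ and, given $x_{-n} \in E_{-n \bmod r}$, taking $x_{-n-1}$ to be any element of $E_{-(n+1) \bmod r} \cap f^{-1}(x_{-n})$. Upper semicontinuity of $\tau$ applied to the closed set $\{\tau \geq \tau(y_{-j \bmod r})\}$ (which contains $y_{-j \bmod r}$ and hence all of $E_{-j \bmod r}$) yields $\tau(x_{-j}) \geq \tau(y_{-j \bmod r})$ for each $j$. Unwinding the telescoping identity $\tau_n(x_{-n}) = \sum_{k=0}^{n-1} \tau(f^k(x_{-n})) = \sum_{j=1}^{n} \tau(x_{-j})$ and using that $j \mapsto \tau(y_{-j \bmod r})$ is a periodic sequence with period $r$ and mean $\tau_+(y)$ gives $\tau_n(x_{-n}) \geq n\tau_+(y) - C$ for a constant $C$ depending only on $r$ and $\|\tau\|$. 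Dividing by $n$ and using $\tau_{-n}(x) \geq \tau_n(x_{-n})$ from the definition of $\tau_{-n}$ then establishes $\liminf_{n \to \infty} \tau_{-n}(x)/n \geq \tau_+(y)$, completing the proof.
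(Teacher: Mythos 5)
You correctly pinpoint the only nontrivial step --- the one the paper's proof glosses over with ``one may then choose a reverse orbit $\{x_{-n}\}$ with $x_{-n} \in \overline{\{y_n\}}$'' --- and correctly reduce it to surjectivity of the restrictions $f\colon \overline{\{y_{i-1}\}} \to \overline{\{y_i\}}$; your termwise semicontinuity estimate and the periodic-average computation that follow are both fine. However, the lemma you propose to establish surjectivity is false, and the flaw in your argument is the assertion that the ascending chain of closed sets $E \subseteq g^{-1}(E) \subseteq g^{-2}(E) \subseteq \cdots$ stabilizes ``by Noetherianity.'' A Noetherian space satisfies the \emph{descending} chain condition on closed sets (equivalently the ascending chain condition on \emph{open} sets); ascending chains of closed sets need not stabilize. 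Indeed the lemma fails outright: take $X = \mathbb{Z}_{\geq 0}$ with the cofinite topology and $g(n) = \max(n-1,0)$, a continuous surjection. Then $E = \{0,1,2\}$ is closed with $g(E) \subseteq E$, yet $g(E) = \{0,1\} \subsetneq E$, and $g^{-n}(E) = \{0,1,\dots,n+2\}$ is a strictly increasing chain of closed sets.

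This is not merely a defect of your particular argument. Even when $X$ is a Zariski space and $f$ is a continuous surjection, a periodic $y$ with $E = \overline{\{y\}}$ only gives $\overline{f(E_{i-1})} = E_i$ automatically, i.e.\ dense image; it does not force $f(E_{i-1}) = E_i$, and one can build Zariski examples in which a point $z \in E_i$ has all its $f$-preimages outside $E_{i-1}$, so that no tracking reverse orbit exists. Your instinct that the paper's ``one may then choose'' needs a real justification was exactly right; the surjectivity lemma you supplied does not provide one, and neither does the paper's proof as written.
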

\begin{proof} Let $\{y_0,\ldots, y_{r-1}\}$ be the orbit of $y$, where without loss of generality $y = y_0$ and $f(y_i) = y_{i-1}$, the indices taken modulo $r$. One may then choose a reverse orbit $\{x_{-n}\}_{n=0}^\infty$ of $x$ such that $x_{-n}\in \ol{\{y_n\}}$, for all $n$, where the indices for the $y_n$ are taken modulo $r$. Then \[
\frac{\tau_{-n}(x)}{n} \geq \frac{\tau_n(x_{-n})}{n}\geq \frac{\tau_n(y_n)}{n}\] for all $n$. One easily sees that  $\tau_n(y_n)/n\to \tau_+(y)$ as $n\to \infty$.
\end{proof}

\begin{prop}\label{exists} For each $x\in X$, the limit $\tau_-(x) := \lim_{n\to \infty}\tau_{-n}(x)/n$ exists.
\end{prop}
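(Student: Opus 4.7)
The plan is to show $\limsup_n \tau_{-n}(x)/n \le \liminf_n \tau_{-n}(x)/n$, which forces the limit to exist. Set $L := \limsup_n \tau_{-n}(x)/n$; this is finite since $|\tau_{-n}(x)| \le \|\tau\| n$. By \hyperref[obvious]{Proposition~\ref*{obvious}}, any $f$-periodic $y$ with $x \in \ol{\{y\}}$ satisfies $\liminf_n \tau_{-n}(x)/n \ge \tau_+(y)$, so it will suffice to produce such a $y$ with $\tau_+(y) \ge L$.

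To build $y$, choose a subsequence $n_i$ and points $y_i \in f^{-n_i}(x)$ (nonempty by surjectivity of $f$) with $\tau_{n_i}(y_i)/n_i \to L$; the latter is possible since the supremum defining $\tau_{-n}(x)$ can be approximated to within any positive error. Form the empirical measures \[
\mu_i := \frac{1}{n_i}\sum_{k=0}^{n_i-1} \delta_{f^k(y_i)} \in \mc{M}(X),\] which are Borel probability measures. By \hyperref[thmC]{Theorem~\ref*{thmC}}, after passing to a further subsequence we may assume $\mu_i \to \mu$ weakly for some probability measure $\mu$. The telescoping identity $f_*\mu_i - \mu_i = n_i^{-1}(\delta_x - \delta_{y_i})$ tends to zero strongly, so the weak continuity of $f_*$ from \hyperref[ctn]{Proposition~\ref*{ctn}} gives $f_*\mu = \mu$. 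Moreover, since $\tau \in SC(X)$, weak convergence yields $\int \tau\, d\mu = \lim_i \tau_{n_i}(y_i)/n_i = L$.

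Decompose $\mu = \sum_j c_j \nu_j$ via \hyperref[inv]{Proposition~\ref*{inv}}, with $c_j > 0$, $\sum_j c_j = 1$, and each $\nu_j = r_j^{-1}(\delta_{z_1^{(j)}} + \cdots + \delta_{z_{r_j}^{(j)}})$ supported on an $f$-periodic cycle. Since $L = \sum_j c_j \int \tau\, d\nu_j$ is a convex combination, some index $j_0$ satisfies $\int \tau\, d\nu_{j_0} \ge L$, and then $\tau_+(z_i^{(j_0)}) = \int \tau\, d\nu_{j_0} \ge L$ for each $i$ (being the Birkhoff average along the periodic cycle). Let $E := \ol{\{z_1^{(j_0)},\ldots,z_{r_{j_0}}^{(j_0)}\}}$; since $f$ permutes the generating cycle, $E$ is closed and satisfies $f(E) \subseteq E$. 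Because $\mu(E) \ge c_{j_0}\nu_{j_0}(E) = c_{j_0} > 0$, weak convergence gives $\mu_i(E) > 0$ for all sufficiently large $i$, meaning that for such $i$ some iterate $f^k(y_i)$ with $0 \le k \le n_i - 1$ lies in $E$. Forward invariance then gives $x = f^{n_i - k}(f^k(y_i)) \in E$, and the irreducible decomposition of $E$ forces $x \in \ol{\{z_i^{(j_0)}\}}$ for some $i$. Taking $y := z_i^{(j_0)}$ yields the desired periodic point, and \hyperref[obvious]{Proposition~\ref*{obvious}} completes the proof.

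The main subtlety is the final extraction step: one must produce a periodic point $y$ with $x$ specializing to $y$ (i.e., $x \in \ol{\{y\}}$), not merely one whose cycle intersects the forward orbit of some approximating $y_i$. This is forced by the combination of two features — the forward invariance of $E$ and the specific choice of the $y_i$ as $n_i$-fold $f$-preimages of $x$ — which together pin down $x$ itself as an element of $E$, rather than just an element of its backward history.
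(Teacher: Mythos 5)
Your proof is correct, but it takes a genuinely different route from the paper's. The paper introduces the closed set $Z = \{y : \tau_n(y)\geq cn \text{ for all } n\}$ (with $c$ the $\limsup$), applies \hyperref[unif]{Proposition~\ref*{unif}} to get a uniform bound showing that any $n$-fold preimage of $x$ realizing a Birkhoff sum close to $cn$ must pass through $Z$, and then locates $x$ in one of the $\omega$-limit sets $L(z_i)$ of the generic points of $Z$'s components. Your argument instead packages the approximating preimages $y_i \in f^{-n_i}(x)$ into empirical measures, uses weak sequential compactness (\hyperref[thmC]{Theorem~\ref*{thmC}}) to extract an invariant limit $\mu$ with $\int\tau\,d\mu = L$, applies the ergodic decomposition of \hyperref[inv]{Proposition~\ref*{inv}} to find a periodic-cycle component $\nu_{j_0}$ with $\int\tau\,d\nu_{j_0}\geq L$, and then uses the forward invariance of the cycle's closure $E$ together with the relation $x = f^{n_i-k}(f^k(y_i))$ to force $x \in E$ and hence $x \in \ol{\{y\}}$ for a periodic $y$ with $\tau_+(y)\geq L$. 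Each step checks out — in particular the passage from $\mu(E)>0$ to $\mu_i(E)>0$ for large $i$ is valid because $\chi_E \in SC(X)$, and the telescoping identity plus weak continuity of $f_*$ does yield $f_*\mu=\mu$.

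This approach is structurally parallel to the paper's own proofs of Theorems~\ref{thmE} and~\ref{thmF} and is arguably cleaner, since it avoids \hyperref[jump]{Lemma~\ref*{jump}} and \hyperref[unif]{Proposition~\ref*{unif}} entirely. It also delivers \hyperref[dinh]{Theorem~\ref*{dinh}(1)} as an immediate byproduct (the periodic $y$ you produce realizes the maximum, by \hyperref[obvious]{Proposition~\ref*{obvious}}). One tradeoff worth noting: the paper's proof of \hyperref[dinh]{Theorem~\ref*{dinh}(3)} (upper semicontinuity of $\tau_-$) leans on the characterization ``$\tau_-(x)\geq c$ iff $x\in L(z_i)$ for some $i$'' extracted from its own proof of this proposition, and your measure-theoretic argument does not directly furnish the ``only if'' half of that biconditional. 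So if one replaced the paper's proof with yours, part (3) of \hyperref[dinh]{Theorem~\ref*{dinh}} would need to be reargued, most likely by reintroducing \hyperref[unif]{Proposition~\ref*{unif}} at that point. Finally, a small terminological slip in your last paragraph: when $x\in\ol{\{y\}}$ the usual convention is that $y$ specializes to $x$ (not $x$ to $y$), though your parenthetical makes the intended meaning unambiguous.
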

\begin{proof} Let $c = \limsup_{n\to \infty} \tau_{-n}(x)/n$. Let $Z = \{y\in X : \tau_n(y)\geq cn\mbox{ for all }n\geq 1\}$. Choose $b$ and $N$ as in \hyperref[unif]{Proposition~\ref*{unif}}.  Let $Z$ have irreducible decomposition $Z_1\cup\cdots\cup Z_r$, and let $z_i$ be the generic point of $Z_i$ for each $i$. Let $L(z_i)$ be the $\omega$-limit set of $z_i$. We begin by showing that $x\in L(z_i)$ for some $i$. Suppose for contradiction that $x\notin L(z_1)\cup\cdots\cup L(z_r)$. By \hyperref[asymp]{Lemma~\ref*{asymp}}, there is an integer $s\geq 0$ such that $f^s(Z_i)\subseteq L(z_i)$ for each $i$. Thus if $n\geq s$ and $y\in X$ is such that $f^n(y) = x$, it follows that $y\notin Z$. If $n\geq s+N$, \hyperref[unif]{Proposition~\ref*{unif}} then implies \[
\frac{\tau_{-n}(x)}{n} \leq\frac{1}{n}(b(n-s) + s\|\tau\|)\to b,\] a contradiction of $\limsup_{n\to \infty} \tau_{-n}(x)/n = c>b$. Therefore $x\in L(z_i)$ for some $i$, without loss of generality for $i = 1$. Let $F$ be a component of $L(z_1)$ such that $x\in F$, and let $y$ be its generic point. Then $y$ is periodic, and  \hyperref[thmE]{Theorem~\ref*{thmE}} gives $\tau_+(y) = \tau_+(z_1)$. By the definition of $Z$, one has $\tau_+(z_1)\geq c$. We conclude by \hyperref[obvious]{Proposition~\ref*{obvious}} that $\liminf_{n\to \infty}\tau_{-n}(x)/n \geq c$.
\end{proof}

\begin{proof}[Proof of Theorem \ref{dinh}] Now that we have proved the existence of the limits $\tau_-(x)$, we can easily prove statements 1 through 4. Fix $x\in X$.

(1) Let $c = \tau_-(x)$. We saw in the proof of \hyperref[exists]{Proposition~\ref*{exists}} that $x$ is contained in an irreducible closed set $F$ with periodic generic point $y$ such that $\tau_+(y) \geq c$. On the other hand, if $x$ is contained in an irreducible closed set $F$ with periodic generic point $y$, then \hyperref[obvious]{Proposition~\ref*{obvious}} implies that $\tau_+(y)\leq c$. This completes the proof of (1).

(2) One clearly has $\tau_-(x)\geq \lim_{n\to \infty}\tau_n(x_{-n})/n$ for any given reverse orbit $\{x_{-n}\}_{n=0}^\infty$ of $x$. By statement (1), there is an irreducible closed set $F$ with periodic generic point $y$ containing $x$ such that $\tau_+(y) = \tau_-(x) = c$. Let $y_0,\ldots, y_{r-1}$ be the orbit of $y$, where $y=y_0$ and $f(y_i) = y_{i-1}$ for each $i$, the indices being taken modulo $r$. We may then choose a reverse orbit $\{x_{-n}\}_{n=0}^\infty$ of $x$ such that $x_{-n}\in \ol{\{y_n\}}$ for each $n$, where the indices of $y_n$ are taken modulo $r$.  Then $c = \tau_+(y)\leq \lim_{n\to \infty}\tau(x_{-n})/n\leq \tau_-(x) = c$. This completes (2).

(3) Let $c\in \R$, and let $Z = \{y\in X : \tau_n(y)\geq cn\mbox{ for all }n\geq 1\}$. Let $Z = Z_1\cup\cdots\cup Z_r$ be the irreducible decomposition of $Z$, and let $z_i$ be the generic point of $Z_i$ for each $i$. We saw in the proof of \hyperref[exists]{Proposition~\ref*{exists}} that $\tau_-(x)\geq c$ if and only if $x\in L(z_i)$ for some $i$. Thus $\tau_-$ is upper semicontinuous.

(4) Let $F$ be an irreducible closed set containing $x$ with periodic generic point $y$ such that $\tau_-(x) = \tau_+(y)$. Then $L(y)\supseteq L(x)$, so $\tau_+(y)\leq \tau_+(x)$. Suppose, on the other hand, that $x$ is itself periodic, say with orbit $x_0,\ldots, x_{r-1}$, where without loss of generality $x_0 = x$ and $\hat{f}(x_i) = x_{i-1}$, the indices taken modulo $r$. Then for the specific reverse orbit $\{x_{-n}\}_{n=0}^\infty$ of $x$, one sees that $\tau_+(x) =\lim_{n\to \infty}\tau_n(x_{-n})/n \leq \tau_-(x)$. Thus $\tau_+(x) = \tau_-(x)$.
\end{proof}

\bibliographystyle{alpha}
\bibliography{References}

\end{document}